\documentclass{article}

\usepackage[english]{babel}

\usepackage[letterpaper,top=2cm,bottom=2cm,left=3cm,right=3cm,marginparwidth=1.75cm]{geometry}

\usepackage{bm}
\usepackage{amsmath}
\usepackage{amssymb}
\usepackage{amsthm}
\usepackage{graphicx}
\usepackage{parallel}
\usepackage{ stmaryrd }
\usepackage{tikz}
\usepackage{multicol}
\usepackage{ stmaryrd }
\usepackage{ latexsym }
\usepackage[shortlabels]{enumitem}
\usepackage{hyperref}
\hypersetup{colorlinks,allcolors=blue,breaklinks=true}

\newcommand{\forkindep}[1][]{
  \mathrel{
    \mathop{
      \vcenter{
        \hbox{\oalign{\noalign{\kern-.3ex}\hfil$\vert$\hfil\cr
              \noalign{\kern-.7ex}
              $\smile$\cr\noalign{\kern-.3ex}}}
      }
    }\displaylimits_{#1}
  }
}

\newcommand{\mipo}{\operatorname{MiPo}}
\newcommand{\Char}{\operatorname{Char}}

\newcommand{\NN}{\mathbb{N}}

\newcommand{\QQ}{\mathbb{Q}}
\newcommand{\RR}{\mathbb{R}}
\newcommand{\MM}{\mathbb{M}}
\newcommand{\FF}{\mathbb{F}}

\newcommand{\VV}{\mathbb{V}}

\newcommand{\Kp}[1]{K[X]_{\operatorname{irr}}^{#1}}
\newcommand{\Cc}{{\mathcal{C}}}
\newcommand{\Ccalg}{{\mathcal{C}^{\operatorname{alg}}}}
\newcommand{\Cctrans}{{\mathcal{C}^{\operatorname{tr}}}}

\newcommand{\mm}{\mathcal{M}}

\newcommand{\ii}{\mathcal{I}}
\newcommand{\jj}{\mathcal{J}}
\newcommand{\kk}{\mathcal{K}}
\newcommand{\rr}{\mathcal{R}}
\newcommand{\dd}{\mathcal{D}}

\newcommand{\set}[1]{{\{#1\}}}
\newcommand{\Set}[1]{{\left\{#1\right\}}}
\newcommand{\spanA}[2]{{{\langle#1\rangle_{#2}}}}

\newcommand{\Fac}{{\operatorname{Fac}}}
\newcommand{\Ker}{{\operatorname{Ker}}}
\newcommand{\ACF}{{\operatorname{ACF}}}
\newcommand{\Image}{{\operatorname{Im}}}

\newcommand{\dcl}{{\operatorname{dcl}}}
\newcommand{\rk}{{\operatorname{rk}}}
\newcommand{\cl}{{\operatorname{cl}}}
\newcommand{\Diag}{{\operatorname{Diag}}}
\newcommand{\Id}{{\operatorname{Id}}}
\newcommand{\Th}{{\operatorname{Th}}}

\newcommand{\Lex}{{\operatorname{Lex}}}
\newcommand{\acl}{{\operatorname{acl}}}

\newcommand{\ua}{{\underline{a}}{}}
\newcommand{\ub}{{\underline{b}}{}}

\newcommand{\ud}{{\underline{d}}{}}
\newcommand{\ue}{{\underline{e}}{}}

\newcommand{\ut}{{\underline{t}}{}}
\newcommand{\uu}{{\underline{u}}{}}
\newcommand{\uv}{{\underline{v}}{}}
\newcommand{\uw}{{\underline{w}}{}}
\newcommand{\ux}{{\underline{x}}{}}
\newcommand{\uy}{{\underline{y}}{}}
\newcommand{\uz}{{\underline{z}}{}}

\newcommand{\uzero}{{\underline{0}}}
\newcommand{\ulambda}{{\underline{\lambda}}}

\newcommand{\uepsilon}{{\underline{\epsilon}}}
\newcommand{\umu}{{\underline{\mu}}}

\newcommand{\utau}{{\underline{\tau}}}

\newcommand{\li}{{\scalebox{0.5}{{$\operatorname{li}$}}}}
\newcommand{\ld}{{\scalebox{0.5}{{$\operatorname{ld}$}}}}

\newcommand{\lii}{{\scalebox{0.5}{$\operatorname{li}$}}}
\newcommand{\ldd}{{\scalebox{0.5}{$\operatorname{ld}$}}}

\newcommand{\tiluw}{{\Tilde{\underline{w}}}}
\newcommand{\tiluy}{{\Tilde{\underline{y}}}}

\newcommand{\uvvec}{{\underline{\Vec{v}}}}

\newcommand{\uxvec}{{\underline{\Vec{x}}}}

\newcommand{\xvec}{{\Vec{x}}}

\newcommand{\Hfour}{{$(\operatorname{H4})$}}

\newcommand{\TKvs}{{T_{K\operatorname{-vs}}}}
\newcommand{\TKvsThe}{{T_{K\operatorname{-vs},\theta}}}
\newcommand{\TKvsTheC}{{T^C_{K\operatorname{-vs},\theta}}}

\newcommand{\RCF}{{\operatorname{RCF}}}

\newcommand{\tp}{{\operatorname{tp}}}

\newcommand{\LK}{{L_K}}

\newcommand{\LKThe}{{L_{K,\theta}}}
\newcommand{\LRC}{{L_{R_C}}}

\newcommand{\Lr}{L_{\operatorname{r}}}

\newcommand{\standartConstruction}{\hyperref[lemma_standart_construction]{Standard Construction}}

\newcommand{\TrafoLemma}{\hyperref[lemma_trafoo]{Transformation Lemma}}

\newcommand{\placeholderNotation}{\hyperref[def_placeholder_notation]{Placeholder-Notation}}

\newtheorem*{notation}{Notation}

\newtheorem{theorem}{Theorem}[section]
\newtheorem*{theorem*}{Theorem}
\newtheorem{theoremi}{Theorem}

\newtheorem{definition}[theorem]{Definition}
\newtheorem{fact}[theorem]{Fact}
\newtheorem{remark}[theorem]{Remark}
\newtheorem{lemma}[theorem]{Lemma}
\newtheorem{corollary}[theorem]{Corollary}
\newtheorem{example}[theorem]{Example}
\newtheorem{observation}[theorem]{Observation}

\newtheorem{subclaim}{Claim}[theorem]

\newtheorem*{subdefinition*}{Definition}
\newtheorem*{subclaim*}{Claim}

\newenvironment{innerproof}[1][Proof]
{%
  \begin{proof}[#1]%
}
{%
  \end{proof}%
}

\usepackage{titlesec}
\titleformat{\section}
  {\normalfont\Large\bfseries\boldmath}{\thesection}{1em}{}
\titleformat{\subsection}
  {\normalfont\large\bfseries\boldmath}{\thesubsection}{1em}{}
\titleformat{\subsubsection}
  {\normalfont\normalsize\bfseries\boldmath}{\thesubsubsection}{1em}{}

\title{Model Theory of Generic Vector Space Endomorphisms II}
\author{Leon Chini}

\newcommand{\Addresses}{{
  \bigskip
  \footnotesize

  \textsc{Mathematisches Institut, Universität Bonn, Endenicher Allee 60, D-53115 Bonn, Germany}\par\nopagebreak
  \textit{E-mail address}: 
  \href{mailto:lchini@uni-bonn.de}{\tt lchini@uni-bonn.de}
}}

\begin{document}
\maketitle

\begin{abstract}
\noindent This paper further studies the model companion of an endomorphism acting on a vector space, possibly with extra structure.  
Given a theory $T$ that $\varnothing$-defines an infinite $K$-vector space $\mathbb{V}$ in every model, we set $T_\theta := T \cup \{\text{``$\theta$ is a $K$-endomorphism of $\mathbb{V}$''}\}$.  
We previously defined a family $\{T^C_\theta : C \in \mathcal{C}\}$ of extensions of $T_\theta$ that parameterizes all consistent extensions of the form  
$$  
    T_\theta \cup \left\{\sum\nolimits_{k}\bigcap\nolimits_{l}\operatorname{Ker}(\rho_{j, k, l}[\theta]) = \sum\nolimits_{k}\bigcap\nolimits_{l} \operatorname{Ker}(\eta_{j, k, l}[\theta]) : j \in \mathcal{J}\right\},  
$$  
where all sums and intersections are finite,  
and all the $\rho[\theta]$'s and $\eta[\theta]$'s are polynomials over $K$ with $\theta$ plugged in.  
Notice that properties such as $\theta^2 - 2\operatorname{Id} = 0$ or ``$\rho[\theta]$ is injective for every $\rho \in K[X] \setminus \{0\}$'' can be expressed in such a manner.  
We also presented a sufficient condition that implies that every $T^C_\theta$ has a model companion $T\theta^C$.  
Under this condition, we characterize all definable sets in $T\theta^C$ and study the completions of $T\theta^C$ as well as the algebraic closure.  
If $T$ is o-minimal and extends $\operatorname{Th}(\mathbb{R}, <)$, we prove that $T\theta^C$ has an o-minimal open core.

\end{abstract}

\tableofcontents
\section{Introduction}
This paper is a continuation of \cite{Chi25}, which deals with the model companion of an endomorphism acting on a vector space, possibly with extra structure. The goal of this paper is a more fine-grained model-theoretic study of these model companions. For the relevance of this line of inquiry and a description of earlier work, see the introduction of \cite{Chi25}. 

Let $L$ be a language and $T$ a model-complete $L$-theory with an infinite $\varnothing$-definable vector space $\VV$ in every model.  
Given a consistent set $C$ of constraints on an endomorphism, which encodes conditions of the form  
$$
    \sum\nolimits_{k}\bigcap\nolimits_{l}\Ker(\rho_{k, l}[\theta]) = \sum\nolimits_{k}\bigcap\nolimits_{l} \Ker(\eta_{k, l}[\theta])
$$  
(where all sums and intersections are finite,  
and all the $\rho$'s and $\eta$'s are polynomials over $K$), we define the following theory in the language $L_\theta := L \cup \set{\theta}$:  
$$
T^C_\theta := T \cup \set{\text{``$\theta$ is an endomorphism of $\VV$''}} \cup \set{\text{``$\theta$ satisfies the constraints in $C$''}}.
$$  
In \cite{Chi25}, we have shown that $T^C_\theta$ has a model companion $T\theta^C$ if $T$ satisfies a certain condition \Hfour{}, which corresponds to Definition 1.10 in \cite{dEl21b} and basically states that ``$\psi(\ux; \uy)$ implies no finite disjunction of non-trivial linear dependencies in $\ux$ over $\VV$'' can be expressed as an $L$-formula $\sigma_\psi(\uy)$ for every $L$-formula $\psi(\ux; \uy)$.  
These results, as well as all others relevant to this paper, will be recalled in Section \ref{sec_prelim_res}.  

Assuming the condition \Hfour{}, we obtain a description of definable sets. In the following, an algebraic pattern in $\uw$ is essentially a definable function that maps a tuple $\ud$ to a finite subset of $\acl_{L_\theta}(\ud) \cap \VV$:  
\begin{theoremi}[Theorem \ref{theorem_big_fml}] \label{theorem_A}  
    Suppose that $T$ satisfies \Hfour{}. Every $L_\theta$-formula $\phi(\uw)$ is, modulo $T\theta^C$, equivalent to a finite disjunction of formulas of the form  
    $$
    \exists \uy \in Y_\uw : \psi(\uy; \uw)
    $$  
    with $Y$ being an algebraic pattern (see Definition \ref{def_alg_pattern} or the text above) in $\uw$, and $\psi(\uy; \uw)$ being an $L$-formula.  
\end{theoremi}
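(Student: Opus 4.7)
The plan is to first use model completeness of $T\theta^C$, which holds because it is by assumption the model companion of $T^C_\theta$, to reduce an arbitrary $L_\theta$-formula $\phi(\uw)$ to an existential one $\exists \uv\, \chi(\uv, \uw)$ with $\chi$ quantifier-free in $L_\theta$. Since every $L_\theta$-term is an iterated application of $\theta$ to an $L$-term, one can introduce fresh variables $\uy$ naming all $\theta$-iterates appearing in $\chi$; then $\chi$ rewrites as an $L$-formula in $\uy$ and $\uw$, conjoined with finitely many equations of the form $y_{i+1} = \theta(y_i)$ together with membership assertions $y_i \in \VV$. At this stage we are asking for the existence of a tuple in $\VV$ satisfying an $L$-condition and respecting a prescribed $\theta$-graph.

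Next I would split the existentially quantified variables into those which the formula forces into $\acl_{L_\theta}(\uw)\cap \VV$ and those which remain generic. The algebraic witnesses should be packaged as an algebraic pattern $Y_\uw$ in the sense of Definition \ref{def_alg_pattern}: the finite set of permitted tuples of algebraic elements, which is $L_\theta$-definable uniformly in $\uw$. For the remaining generic witnesses one needs an existence result of Nullstellensatz flavour: once the algebraic witnesses are fixed, any further $L$-compatible extension by a generic $\theta$-orbit should be realizable in an existentially closed model, so that $\exists \uv\, \chi$ becomes equivalent to an assertion of the shape $\exists \uy \in Y_\uw\; \psi(\uy; \uw)$ with $\psi$ purely in $L$.

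The main obstacle is this realization step, and it is where the machinery of \cite{chini2025modeltheorygenericvector} enters. The decomposition of $\VV$ into $\theta$-invariant ``algebraic'' summands dictated by the constraints in $C$ together with a ``transcendental'' complement is the key tool: one extends the $L_\theta$-structure generated by $\uw$ and the algebraic witnesses by freely adjoining generic elements in the transcendental summand, uses $C$-image-completeness together with condition \Hfour{} to verify that this extension still embeds into a model of $T^C_\theta$, and then invokes existential closedness to pull the witnesses back. A compactness argument and a case split on the isomorphism types of the configurations underlying $Y_\uw$ then yield the required finite disjunction; the genuine technical subtlety is ensuring that every additional $L$-condition on the transcendental part can be absorbed into a single $L$-formula $\psi$ defined uniformly in $\uw$, which is precisely what \Hfour{} is designed to guarantee, in the spirit of \cite{gen_exp_by_reduct}.
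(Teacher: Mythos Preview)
Your high-level outline is in the right spirit and matches the paper's strategy: reduce to existential via model completeness, unwind the $\theta$-terms, separate ``algebraic'' from ``generic'' witnesses, and then use the axioms of $T\theta^C$ to eliminate the generic part. However, the heart of the argument is precisely the step you gloss over as ``split the existentially quantified variables into those which the formula forces into $\acl_{L_\theta}(\uw)\cap\VV$ and those which remain generic,'' and your proposal does not supply a mechanism for doing this \emph{uniformly in $\uw$} at the level of formulas. For a fixed parameter tuple $\ud$ one can indeed argue semantically, but which variables become algebraic, and via which $R_C$-combinations, depends on $\ud$; your appeal to ``compactness and a case split on isomorphism types'' does not explain how to produce a \emph{finite} list of algebraic patterns $Y$ that covers all $\ud$ simultaneously.

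The paper resolves this by a syntactic induction rather than a semantic one. One first brings the formula into the shape $\exists\uy\in Y_\uw\,\exists\ux\in\VV:\psi_\theta(\ux;\uy\uw)\wedge S(\ux;\pi(\uy))$ with $S$ a parametrized $C$-sequence-system and $\psi$ bounded by $S$ (Lemma~\ref{lemma_exist_fml_reduc}). Then one applies a \emph{splitting strategy} coming from \Hfour{} (Lemma~\ref{lemma_Hfour_splitting_strategy}): $\psi$ decomposes into pieces that either lie in the suitable set $\dd_{\operatorname{triv}}$ (no linear dependence implied, so the e.c.\ axioms eliminate $\ux$ outright) or force a single explicit nontrivial equation $\sum_l\lambda_l[\theta](x_l)\in\varphi(\VV;\uw)$. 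In the latter case this new equation is absorbed into the algebraic pattern (Lemma~\ref{lemma_extend_alg_pattern_by_eq_and_tuples}) and combined with $S$ via the Transformation Lemma (Lemma~\ref{lemma_trafoo}), yielding a new $C$-sequence-system $S'$ with $(\rk(S'),\deg(S'))<_{\Lex}(\rk(S),\deg(S))$. Well-foundedness of $(\NN^2,<_{\Lex})$ gives termination and hence the finite disjunction. This inductive bookkeeping on $(\rk,\deg)$ is the missing idea in your sketch; without it, neither the uniformity of the algebraic pattern nor the finiteness of the disjunction is secured.
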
  
\noindent We also obtain a more technical version of Theorem \ref{theorem_A}, which allows us to simplify an $L_\theta$-formula $\phi(\uz; \uw)$ only in the variable $\uw$; see Theorem \ref{theorem_big_fml_preceise}.  

In \cite{Chi25}, we presented a ring $R_C$ of endomorphisms of $\VV$ that are definable in $T\theta^C$ using only the language of the $K$-vector space and $\theta$; see Fact \ref{theorem_r_c_def} for more details.
This ring induces an $R_C$-module structure on $\VV$ that extends its $K$-vector space structure.  
We let $\cl_\theta(A)$ denote the smallest set containing $A$ that is closed under both $\acl_L$ and multiplication by any $r \in R_C$.  
With Theorem \ref{theorem_A}, we obtain the following:  
\begin{theoremi}[Theorem \ref{theorem_ee_iff}]
    Suppose that $T$ satisfies \Hfour{}.
    Two models $(\mm_1, \theta_1)$ and $(\mm_2, \theta_2)$ of $T\theta^C$ are elementarily equivalent if and only if there is an $L_\theta$-isomorphism  
    $$
    \iota \colon (\cl_\theta^{(\mm_1, \theta_1)}(\varnothing), \theta_1) \to (\cl_\theta^{(\mm_2, \theta_2)}(\varnothing), \theta_2)
    $$  
    which is also $L$-elementary with respect to $\mm_1$ and $\mm_2$.  
    If $T$ admits quantifier elimination or if $\acl_L(A) \models T$ for every subset $A \subseteq \mm \models T$, then it suffices for $\iota$ to be an $L_\theta$-isomorphism.  
\end{theoremi}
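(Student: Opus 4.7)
The plan is to apply Theorem~A in the sentence case and then transfer truth across the two models using $\iota$. First I would take an arbitrary $L_\theta$-sentence $\phi$; by Theorem~\ref{theorem_big_fml}, modulo $T\theta^C$ it is equivalent to a finite disjunction of formulas of the form $\exists \uy \in Y : \psi(\uy)$, where $Y$ is a $\varnothing$-algebraic pattern in the empty tuple, hence just a $\varnothing$-definable finite subset of $\acl_{L_\theta}(\varnothing) \cap \VV$, and $\psi(\uy)$ is a pure $L$-formula. So the elementary-equivalence question reduces to asking, for each such building block, whether truth in $\mm_1$ matches truth in $\mm_2$.

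Next I would argue that each finite set $Y^{(\mm_i, \theta_i)}$ sits inside $\cl_\theta^{(\mm_i, \theta_i)}(\varnothing)$. Since algebraic patterns are assembled from $L$-algebraic data together with the action of the ring $R_C$ (whose elements are $\varnothing$-definable endomorphisms of $\VV$), and $\cl_\theta(\varnothing)$ is by definition the closure of $\varnothing$ under $\acl_L$ and multiplication by $R_C$, this inclusion should follow from the construction of algebraic patterns recalled in Section~\ref{sec_prelim_res}. Granted this, the assumed isomorphism $\iota$ restricts to a bijection $Y^{(\mm_1, \theta_1)} \to Y^{(\mm_2, \theta_2)}$; applying the $L$-elementarity of $\iota$ to the pure $L$-formula $\psi$ then gives $\mm_1 \models \psi(\uy)$ iff $\mm_2 \models \psi(\iota(\uy))$ for every $\uy \in Y^{(\mm_1, \theta_1)}$. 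Taking the disjunction over the finitely many such $\uy$ yields $\mm_1 \models \phi$ iff $\mm_2 \models \phi$, which is the desired elementary equivalence.

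For the addendum, if $T$ admits quantifier elimination then every $L_\theta$-isomorphism is in particular an $L$-isomorphism between substructures, hence automatically $L$-elementary, so the $L$-elementarity hypothesis on $\iota$ becomes redundant. Under the alternative hypothesis that $\acl_L(A) \models T$ for every $A$, the restriction of $\iota$ to the $L$-algebraic closures of $\varnothing$ (which sit inside $\cl_\theta(\varnothing)$ by definition) is an isomorphism between two models of $T$; by the model-completeness of $T$ this restriction is $L$-elementary, and the remaining elements of $\cl_\theta(\varnothing)$ are obtained by applying $\varnothing$-definable functions ($R_C$-multiplications) to these, so the $L$-elementarity extends along all of $\iota$.

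The step I expect to be the main obstacle is the containment $Y^{(\mm_i, \theta_i)} \subseteq \cl_\theta^{(\mm_i, \theta_i)}(\varnothing)$. This hinges on having a sufficiently explicit description of the outputs of algebraic patterns at the empty parameter tuple; in the favorable case every $L_\theta$-algebraic element over $\varnothing$ in $\VV$ is an $R_C$-combination of elements of $\acl_L(\varnothing) \cap \VV$ and hence lands in $\cl_\theta(\varnothing)$, but establishing this --- rather than only the a priori weaker inclusion into $\acl_{L_\theta}(\varnothing)$ --- is what aligns the algebraic-pattern machinery with the $\cl_\theta$-description and makes the proof close.
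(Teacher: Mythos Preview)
Your main argument is correct and is precisely the paper's approach: it is the empty-tuple case of Corollary~\ref{corollary_same_type}, whose proof unpacks Theorem~\ref{theorem_big_fml} and transfers each building block $\exists \uy \in Y_\uw : \psi(\uy;\uw)$ across $\iota$. The step you flagged as the obstacle---that $Y^{(\mm_i,\theta_i)}\subseteq\cl_\theta^{(\mm_i,\theta_i)}(\varnothing)$ and that $\iota$ carries $Y^{(\mm_1,\theta_1)}$ bijectively onto $Y^{(\mm_2,\theta_2)}$---is exactly Lemma~\ref{lemma_iota_alg_pattern}, proved by a short induction along the layers of the algebraic pattern; so your concern is well placed but the difficulty is minor.

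There is one genuine gap in your treatment of the addendum under the hypothesis ``$\acl_L(A)\models T$ for all $A$''. You restrict $\iota$ to $\acl_L(\varnothing)$, obtain $L$-elementarity there, and then claim that the remaining elements of $\cl_\theta(\varnothing)$ are obtained by applying $R_C$-multiplications to $\acl_L(\varnothing)$. That is not what $\cl_\theta$ does: it alternates $\acl_L$ and $\spanA{\;}{C}$, possibly transfinitely, so after one round of $R_C$-multiplications you must take $\acl_L$ again, and your extension step does not propagate $L$-elementarity through this (the $R_C$-maps are $L_\theta$-definable, not $L$-definable). The paper's fix is simpler and avoids any iteration: $\cl_\theta(\varnothing)$ is by definition $\acl_L$-closed, hence under the hypothesis it is itself a model of $T$; thus $\iota$ is an $L$-isomorphism between two models of $T$, and model-completeness of $T$ gives $L$-elementarity directly (this is Remark~\ref{remark_when_L_theta_iso_is_enough}).
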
  
\noindent In Corollary \ref{corollary_same_type}, we obtain a similar description for when two tuples have the same type. We also show that $T\theta^C$ is the model completion of  
$$
T^C_\theta \cup \set{\text{``$\theta$ is $C$-image-complete''}}
$$  
(see Corollary \ref{corollary_model_completion}), where $C$-image-completeness (Definition \ref{def_C_image_comple}) can be seen as a ``generalized surjectivity'' condition.  
We are also able to prove a criterion under which $T\theta^C$ has quantifier elimination in a suitable language (see Theorem \ref{theorem_qe}).  
This implies that $\TKvs\theta^C$ has quantifier elimination in the language of $R_C$-modules, where $\TKvs$ is the theory of $K$-vector spaces and $R_C$ is the previously mentioned ring of $\LKThe$-definable endomorphisms.

We also study the algebraic closures in $T\theta^C$ and obtain the following results:
\begin{theoremi}[Theorem \ref{theorem_acl}]
    Suppose that $T$ satisfies \Hfour{}.
    The equation $\acl_{L_\theta}(A) = \cl_\theta(A)$ holds for any set $A \subseteq (\mm, \theta) \models T\theta^C$.
\end{theoremi}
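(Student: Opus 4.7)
The inclusion $\cl_\theta(A) \subseteq \acl_{L_\theta}(A)$ is routine: since $\acl_L \subseteq \acl_{L_\theta}$ and every $r \in R_C$ is an $L_\theta$-definable endomorphism of $\VV$ (Fact \ref{theorem_r_c_def}), the set $\acl_{L_\theta}(A)$ is closed under both operations generating $\cl_\theta$, and minimality of $\cl_\theta(A)$ yields the inclusion.

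For the reverse, by idempotency of $\cl_\theta$ and the easy direction just proved, it suffices to show $\acl_{L_\theta}(\cl_\theta(A)) \subseteq \cl_\theta(A)$; so I assume $A = \cl_\theta(A)$ and, given $b \in \mm \setminus A$, aim to produce infinitely many realizations of $\tp_{L_\theta}(b/A)$ in a saturated elementary extension. For any $L_\theta$-formula $\phi(w)$ over $A$ with $\models \phi(b)$, apply Theorem \ref{theorem_A} to rewrite $\phi$ as a disjunction of formulas of the shape $\exists \uy \in Y(w): \psi(\uy, w)$, where $\psi$ is an $L$-formula and $Y$ is an algebraic pattern; fix a disjunct holding at $b$ with a witness $\uy_0 \in Y(b)$. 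Since algebraic patterns produce tuples built from their input via $\acl_L$ and the $L_\theta$-definable $R_C$-action, one has $\uy_0 \in \cl_\theta(Ab) \cap \VV$. Hence whether $\phi(b')$ holds for an element $b'$ is essentially determined by the $L$-type of $\cl_\theta(Ab')$ over $A$ together with the way $\theta$ acts on it.

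The central step is an amalgamation: since $b \notin A$, the substructure $\cl_\theta(Ab)$ is a proper extension of $A$, and I would construct, by iterated amalgamation inside a saturated elementary extension of $\mm$, infinitely many $L$-elementary $L_\theta$-embeddings $f_i \colon \cl_\theta(Ab) \hookrightarrow \mm$ fixing $A$ pointwise and sending $b$ to pairwise distinct elements $b_i$. By the elementary equivalence criterion (Theorem \ref{theorem_ee_iff}) applied in its relative form over $A$, each $b_i$ realizes $\tp_{L_\theta}(b/A)$; this contradicts the finiteness of $\phi(\mm)$ and therefore forces $b \in \cl_\theta(A)$.

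The main obstacle will be the amalgamation step: one must check that $L$-elementary copies of $\cl_\theta(Ab)$ over $A$ can be simultaneously realized inside a single model of $T$ (where condition \Hfour{} and the $\acl_L$-closedness of $A = \cl_\theta(A)$ are essential), and then that the resulting common $\theta$-action extends to a model of $T\theta^C$ via existential closedness of such models. Once this is in place, the relative form of Theorem \ref{theorem_ee_iff} over $A$ transports the embeddings to elements of the saturated extension realizing $\tp_{L_\theta}(b/A)$, completing the argument.
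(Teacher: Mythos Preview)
Your overall strategy matches the paper's: reduce to $A=\cl_\theta(A)$, produce infinitely many $L$-elementary $L_\theta$-copies of $\cl_\theta(Ab)$ over $A$ inside some model of $T$, equip their union with a common $\theta$, push into an existentially closed model, and invoke the type criterion. Two points, however, deserve correction.

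First, the reference to \Hfour{} is misplaced, and the amalgamation you sketch hides the only genuinely delicate point. The paper does not use \Hfour{} here at all; it uses a purely $L$-model-theoretic lemma (Lemma~\ref{lemma_acl_indope}) to produce copies $B_i \equiv_A \cl_\theta(Ab)$ in some $\mm' \models T$ with the strong disjointness $B_i \cap \acl_L(B_j : j \neq i) = A$. This disjointness is not cosmetic: it is exactly what makes the glued map $\theta'$, defined by transporting $\theta$ along each $\iota_i$, a $C$-endomorphism. In the algebraic case this is easy, but in the transcendental case one must check $\Ker(f^{C+1}[\theta']) = \Ker(f^C[\theta'])$, and the argument genuinely uses that any element of $B_i$ in $\Ker(f^{C+1}[\theta'])$ differs from an element of $A$ by something in $\Ker(f^C)$ computed \emph{inside} $B_i$; this relies on $C$-image-completeness of $(\cl_\theta(Ab)\cap\VV,\theta)$ (Fact~\ref{lemma_substructure_R_C}) and on the disjointness to isolate the $B_i$-components. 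Your phrase ``the resulting common $\theta$-action extends to a model of $T\theta^C$ via existential closedness'' skips this verification entirely, and without it the construction need not yield a $C$-endomorphism at all. After the Claim, one still needs the Standard Construction (Fact~\ref{lemma_standart_construction}) to extend $\theta'$ to all of $\VV'$ before passing to an existentially closed extension.

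Second, two smaller issues: the result you want to cite at the end is Corollary~\ref{corollary_same_type} (equality of $L_\theta$-types from an $L$-elementary $L_\theta$-isomorphism of $\cl_\theta$-closures), not Theorem~\ref{theorem_ee_iff}; and the paragraph unpacking a single formula via Theorem~\ref{theorem_A} is unnecessary once you have Corollary~\ref{corollary_same_type}, which handles the full type at once.
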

\begin{theoremi}[Theorem \ref{theorem_nes_cond_fixed}] \label{theorem_fake_acl_cond}
    Suppose that $T$ satisfies \Hfour{}. The following conditions are necessary for $\acl_{L_\theta}$ to have the exchange property:
    \begin{enumerate}[(i)]
        \item The algebraic closure $\acl_L$ must have the exchange property in $T$.
        \item The vector space $\VV$ must be one-dimensional with respect to the dimension induced by $\acl_L$ and $C$ must be non-trivial (see Fact \ref{fact_trivvivi}).
        \item The ring $R_C$ must be a field.
    \end{enumerate}
\end{theoremi}
\noindent Note that the conditions in Theorem \ref{theorem_fake_acl_cond} are not sufficient.  
We present $\RCF{}\theta^C$ as an example, where $R_C$ is a field (for the right $C$), $\acl_L$ has the exchange property, but $\acl_{L_\theta}$ does not have the exchange property (see Example \ref{ex_no_ex}).

The work of Block-Gorman in \cite{Blo23} gives many examples of o-minimal theories that satisfy \Hfour{}. We use the technical variant of Theorem \ref{theorem_A} to prove the following:
\begin{theoremi}[Theorem \ref{theorem_o_min_open_core}] \label{theorem_D}
    Let $T$ be an o-minimal expansion of $\Th(\RR,  <)$ which satisfies \Hfour{}.  
    Then $T\theta^C$ has an o-minimal open core.
\end{theoremi}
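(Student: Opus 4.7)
The plan is to show that every open $L_\theta$-definable subset of $\VV^n$ coincides with an $L$-definable set; since the $L$-reduct is $o$-minimal by hypothesis, this will yield the $o$-minimal open core property. The main tool is the precise version of Theorem~A (Theorem~\ref{theorem_big_fml_preceise}), which decomposes any $L_\theta$-formula $\phi(\uw)$ modulo $T\theta^C$ into a finite disjunction of formulas of the form $\exists \uy \in Y_i(\uw) : \psi_i(\uy; \uw)$, with $\psi_i$ an $L$-formula and $Y_i$ an algebraic pattern.

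Fix an open $L_\theta$-definable set $D = \phi(\mm^n)$, apply Theorem~\ref{theorem_big_fml_preceise}, and pick a point $\uw_0 \in D$ with an open neighborhood $U \subseteq D$. Pigeonholing over the finitely many disjuncts on $U$, some $D_i := \{\uw : \exists \uy \in Y_i(\uw) : \psi_i(\uy; \uw)\}$ must cover an open sub-neighborhood of $\uw_0$. Using $o$-minimal cell decomposition in $T$ applied to the $L$-definable data underlying $Y_i$, I would extract on a further open sub-neighborhood a continuous $L_\theta$-definable branch $\uw \mapsto \uy(\uw) \in Y_i(\uw)$ such that $\psi_i(\uy(\uw); \uw)$ holds throughout.

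The heart of the argument is then the claim that the continuous $L_\theta$-algebraic section $\uy(\uw)$ is, on a dense open sub-neighborhood, $L$-algebraic over $\uw$. By Theorem~\ref{theorem_acl} we have $\acl_{L_\theta}(\uw) = \cl_\theta(\uw)$, so each element of $Y_i(\uw)$ is built from $\acl_L(\uw)$ via the action of the ring $R_C$ of $L_\theta$-definable endomorphisms; but in the existentially closed model, the only continuous elements of $R_C$ are multiplications by scalars in $K$ (the rest being generically wild), so continuity forces $\uy(\uw) \in \acl_L(\uw)$ on a dense open subset. Thus $\psi_i(\uy(\uw); \uw)$ reduces to an $L$-formula on a dense open subset of $U$, and by openness of $D$ together with the $o$-minimal tameness of $L$-definable sets, $D$ itself is $L$-definable.

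The main obstacle is the continuity-forces-$L$-definability step: rigorously showing that continuous sections of an algebraic pattern land in $\acl_L$ on a dense open set. This requires exploiting the generic discontinuity of non-scalar elements of $R_C$ in the existentially closed model, probably via an iterated application of Theorem~A to the graph of the candidate continuous section combined with a density argument within $Y_i$.
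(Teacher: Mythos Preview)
Your proposal has a genuine gap, and the paper takes a substantially different route.

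First, although you cite Theorem~\ref{theorem_big_fml_preceise}, you describe and use only the conclusion of Theorem~\ref{theorem_big_fml}: a decomposition into $\exists \uy \in Y_i(\uw):\psi_i(\uy;\uw)$. The extra structure in Theorem~\ref{theorem_big_fml_preceise} --- the separation of the ``topological'' variable $\uz$ from the parameter variable $\uw$, and the explicit $C$-sequence-system $S(\ux;\pi(\uy))$ --- is precisely what the paper exploits, and your argument never touches it.

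Second, and more seriously, the step you yourself flag as the obstacle \emph{is} the theorem. You want: if $\uw\mapsto\uy(\uw)\in Y_i(\uw)$ is a continuous $L_\theta$-definable section, then $\uy(\uw)\in\acl_L(\uw)$ on a dense open set. Your justification (``the only continuous elements of $R_C$ are scalars'') is not a proof: the elements of $Y_i(\uw)$ are built by iterating $\acl_L$ and $R_C$-action, and ruling out cancellation effects or continuous composites requires exactly the kind of control over $\theta$ that you are trying to establish. The suggestion to ``iterate Theorem~A on the graph of the section'' is circular without new input. Even granting your obstacle, the passage from ``$L$-definable on a dense open subset of each small neighbourhood'' to ``$D$ is $L$-definable'' is not justified: you get different $L$-formulas near different points, with no uniformity.

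The paper avoids sections and continuity of $R_C$ entirely. It introduces a refined suitable set $\dd_o$ of $L$-formulas $\psi(\uxvec;\uz;\uw)$ such that \emph{locally in $\uz$} (i.e.\ after intersecting with any open box $B(\uepsilon)$) the formula either becomes inconsistent or still implies no linear dependencies in $\uxvec$. The technical heart is proving that $\dd_o$ has a splitting strategy: this uses an improved form of the \Hfour{} witness (Lemma~\ref{lemma_local_hfour}) in which the algebraic constraints $\varphi_k$ depend on $\uw$ but not on the box parameter $\ue$, established via o-minimal cell decomposition and a local-constancy argument. With $\dd_o$ in hand, Theorem~\ref{theorem_big_fml_preceise} reduces any $L_\theta$-definable $X$ to a finite union of sets $\{\uz:\exists\ux\in\VV\,\psi'_\theta(\ux;\uz;\ud')\wedge S'(\ux)\}$ with $\psi'\in\dd_o$. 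Then one shows directly that $\operatorname{Cl}(X)=\operatorname{Cl}(\zeta(\mm;\ud'))$ for the $L$-formula $\zeta(\uz;\uw'):=\exists\uxvec\,\psi'(\uxvec;\uz;\uw')$: the inclusion $X\subseteq\zeta(\mm;\ud')$ is trivial, and the reverse inclusion of closures follows because the local no-dependency property of $\dd_o$ lets the existentially-closed axioms (Theorem~\ref{theorem_big_characterization}) produce a witness for $\psi'_\theta\wedge S'$ inside every box. No analysis of continuous sections of algebraic patterns is needed.
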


\noindent \textbf{Acknowledgment.} The author would like to thank Christian d'Elbée for reading an earlier draft of this paper and for providing valuable feedback and suggestions.
The author would also like to thank Philipp Hieronymi for some suggestions and general support.

\section{Preliminary Results} \label{sec_prelim_res}

In this section, we provide an overview of all relevant results from \cite{Chi25}. We omit some highly technical results and cite them when needed.

\subsection{The setting}  
Let $L$ be a first-order language, let $T$ be a model-complete $L$-theory, and let $K$ be a field.
Furthermore, assume that the theory of $K$-vector spaces is definable in $T$.
By this we mean that there are $L$-formulas $\Omega_{\VV}(\ux)$, $\Omega_{0}(\ux)$, $\Omega_{+}(\ux_1, \ux_2, \uy)$, and $\Omega_{\lambda\cdot}(\ux, \uy)$ for each $\lambda \in K$ such that, in every model $\mm \models T$, they define an infinite $K$-vector space $(\VV, 0, +, (\lambda \cdot)_{\lambda \in K})^{\mm}$.
In the case $K = \QQ$, one may also view $(\VV, 0, +)^{\mm}$ as a torsion-free divisible abelian group, since these are precisely the $\QQ$-vector spaces.
If a model $\mm \models T$ is given, then $\VV$ denotes $\VV^\mm$.
If the model is denoted with $\mm'$ instead of $\mm$, then we will write $\VV'$ instead of $\VV$.
If no model of $T$ is clear from the context, then we will still use the letter $\VV$ to denote a $K$-vector space.
We may say ``vector space'' instead of ``$K$-vector space'', ``polynomial'' instead of ``$K$-polynomial'', ``linearly independent'' instead of ``$K$-linearly independent'', and so on. Definiable will always mean $\varnothing$-definable.

\begin{example}
    \label{example_main}
    Two of our main examples are as follows:
    \begin{enumerate}[(i)]
        \item Let $\LK = \set{0, +, (\lambda \cdot)_{\lambda\in K}}$ and let $\TKvs$ be the theory of $K$-vector spaces, with the obvious formulas.
        Then, for any $\mm \models \TKvs$, we choose $(\VV, 0, +, (\lambda \cdot)_{\lambda\in K}) = \mm$.
        Similarly, we can also work with ordered divisible abelian groups, since these are precisely ordered $\QQ$-vector spaces.
        \item Let $K = \QQ$, $\Lr = \set{0, 1, +, \cdot, <}$, and $T = \RCF$, the theory of real closed fields.
        Then, for any $\rr \models \RCF{}$, we choose $(\VV, 0, +, (q \cdot)_{q\in \QQ})^{\rr}$ to be $(\rr_{>0}, 1, \cdot, (x \mapsto x^q)_{q\in \QQ})$.
    \end{enumerate}
\end{example}

\noindent Notationally, we will treat $\VV$ as a unary set, or even as a separate sort.
Any $\LK$-term or formula can then be viewed as an $L$-definable function or an $L$-formula, respectively.
Note that $0$, $+$, and $(\lambda\cdot)_{\lambda\in K}$ need not belong to $L$, so they are not necessarily $L$-terms.
For a given theory, there can be multiple definable vector spaces, as in the case of $\RCF{}$.
Thus, whenever a theory $T$ is given, we actually mean the tuple $(T, \Omega_{\VV}, \Omega_{0}, \Omega_{+}, (\Omega_{\lambda\cdot})_{\lambda \in K})$.

We now define $L_\theta := L(\theta) := L \cup \set{\theta}$, where $\theta$ is a function symbol not contained in $L$.
This has the drawback that, for example, if $\rr \models \RCF$ and the positive elements are viewed as a $\QQ$-vector space, then $\theta$ must also be defined outside $\VV = \rr_{>0}$.
In this case, one might try to set $\theta(0) = 0$ and $\theta(-1) \in \set{1, -1}$ so as to extend $\theta$ to an endomorphism of $(\rr, 1, \cdot)$, but then the ambient structure is no longer a vector space.
For the sake of uniformity, we instead define $\theta(x)$ to be the neutral element of $\VV$ for all $x \not\in \VV$ and set
$$
T_\theta := T \cup \set{\text{``$\theta_{\restriction \VV}$ is a $(\VV, 0, +, (\lambda \cdot)_{\lambda\in K})$-endomorphism''}} \cup \set{\forall x \not\in \VV: \theta(x) = 0}.
$$
In particular, $\theta^n(x) = 0$ for all $x \not\in \VV$ and all $n > 0$.
For consistency, we also define $\theta^0(x) = 0$ whenever $x \not\in \VV$, and $x + y = 0$ whenever either $x \not\in \VV$ or $y \not\in \VV$.
Practically, we will ignore the behavior of $\theta$ outside of $\VV$ and simply treat $\theta$ as a function defined only on $\VV$.
For example, we set $\Ker(\theta) := \set{v \in \VV : \theta(v) = 0}$, and similarly define the kernel for any function that is an endomorphism of $\VV$.

Note that if $\VV$ is an $n$-ary set, then we actually need $n$ different $n$-ary function symbols $\theta_1, \dots, \theta_n$ rather than a single unary function symbol $\theta$.
However, as mentioned above, we will treat elements of $\VV$ as singletons in order to simplify notation and will therefore pretend that $\theta$ is unary.

\begin{definition}
    Given a polynomial $\rho \in K[X]$ and any $d \geq \deg(\rho)$, we let $\rho[\theta]$ denote the endomorphism of $\VV$ defined by
    $
    \rho[\theta](v) := \sum\nolimits_{i=0}^{d} (\rho)_i \cdot \theta^i(v)
    $
    for all $v \in \VV$. Here each $(\rho)_i$ is the respective coefficient of $X^i$ in $\rho$.
\end{definition}

\begin{notation}
    If $\theta$ is clear from the context we may write $\Ker(\rho)$ and $\Image(\rho)$ instead of $\Ker(\rho[\theta])$ and $\Image(\rho[\theta])$.
\end{notation}

\noindent It is easy to check that $\rho[\theta] + \eta[\theta] = (\rho + \eta)[\theta]$ and $\rho[\theta] \circ \eta[\theta] = (\rho \cdot \eta)[\theta]$. Many classical facts for polynomials, such as Bézout's identity or Euclidean division, can easily be translated to this setting. In \cite{Chi25}, we use these results quite a lot, but we do not need them in this paper.

\subsection{Kernel configurations and extensions of $T_\theta$}

\noindent As stated in the introduction, we consider a family $\set{T^C_\theta : C \in \Cc}$ of extensions of $T_\theta$.

\begin{notation}
    We let $\Kp{}$ denote the set of all monic irreducible polynomials over $K$.
\end{notation}

\noindent We start by defining our index set $\Cc$:

\begin{definition} \label{def_kernel_conf}
    We call a pair $(c, d)$ a \textbf{kernel configuration} if
    \begin{enumerate}[(i)]
        \item $c \colon \Kp{} \to \NN \cup \set{\infty}$ is a function; and
        \item $d \in \NN_{> 0} \cup \set{\infty}$ is either $\infty$ or satisfies $d = \sum_{f \in \Kp{}} \deg(f) \cdot c(f)$.
    \end{enumerate}
    We let $\Cc$ denote the set of all kernel configurations.
    Given such a kernel configuration $C = (c, d) \in \Cc$, we set $C(f) := c(f)$ for all $f \in \Kp{}$.
    We define the \textbf{degree} of $C$ by $\deg(C) := d$.
    We say that $C$ is \textbf{algebraic} if $\deg(C) < \infty$.
    In this case, we define the \textbf{minimal polynomial} of $C$ by $\mipo(C) := \prod_{f \in \Kp{}} f^{C(f)}$
    (since $\deg(C) < \infty$, only finitely many factors are different from $1$, so this product is well defined).
    We say that $C$ is \textbf{transcendental} if $\deg(C) = \infty$.
    We let $\Ccalg$ and $\Cctrans$ denote the sets of all algebraic and all transcendental kernel configurations, respectively.
\end{definition}

\noindent Note that the set of kernel configurations depends on the field $K$.
We are now ready to define the family $\set{T^C_\theta : C \in \Cc}$:

\begin{definition} \label{def_T_C_theta}
    Given $C \in \Cc$ and an endomorphism $\theta \colon \VV \to \VV$, we say that $\theta$ is a \textbf{$\mathbf{C}$-endomorphism} if one of the following holds:
    \begin{enumerate}[(i)]
        \item $C$ is algebraic and $\Ker(\mipo(C)) = \VV$, that is, $\mipo(C)[\theta] = 0$.
        \item $C$ is transcendental and $\Ker(f^{C(f)}) = \Ker(f^{C(f)+1})$ for all $f \in \Kp{}$ with $C(f) < \infty$.
    \end{enumerate}
    We define $T^C_\theta := T_\theta \cup \set{\text{``$\theta$ is a $C$-endomorphism''}}$.
\end{definition}

\noindent The family $\set{T^C_\theta : C \in \Cc}$ might seem a bit arbitrary at first; however, notice that any consistent extension of the form
$$
    T_\theta \cup \Set{\sum\nolimits_{k}\bigcap\nolimits_{l}\Ker(\rho_{j, k, l}[\theta]) = \sum\nolimits_{k}\bigcap\nolimits_{l} \Ker(\eta_{j, k, l}[\theta]) : j \in \jj},
$$
where all sums and intersections are finite,  all the $\rho_{j, k, l}$'s and $\eta_{j, k, l}$'s are polynomials over $K$, and $\jj$ is a potentially infinite index set, is equivalent to some $T^C_\theta$ (see Corollary 2.13 in \cite{Chi25} - the proof heavily uses consequences of Bézout's identity). Also note that every $T^C_\theta$ is consistent, and that $T^C_\theta \not\equiv T^{C'}_\theta$ whenever $C \neq C'$ (see Lemma 2.19 in \cite{Chi25}). So the set $\Cc$ parametrizes all consistent extensions as described above, in some sense. Some concrete examples:
\begin{enumerate}[(i)]
    \item Let $C_\infty$ be transcendental with $C_\infty(f) = \infty$ for all $f \in \Kp{}$. One can check that $T_\theta^{C_\infty}$ is $T_\theta$. 
    \item Let $C_0$ be transcendental with $C_0(f) = 0$ for all $f \in \Kp{}$. One can check that $T_\theta^{C_0}$ is $T_\theta \cup \set{\text{``$\rho[\theta]$ is injective''} : \rho \in K[X] \setminus \set{0}}$. In an existentially closed model of $T_\theta^{C_0}$, the maps $\rho[\theta]$ are isomorphisms, so one can solve systems of equations of the form $\bigwedge_{k=1}^m \sum_{l=1}^n \rho_{k, l}[\theta](x_l) = y_k$ just like in a $K(X)$-vector space.
    \item Let $C_f$ be algebraic with $\mipo(C_f) = f \in \Kp{}$. One can, similarly to (ii), solve such systems of equations as in a $K[X]/(f)$-vector space. Here, however, one has the potential advantage that the sequence $\set{\theta^i(v) : i \in \omega}$ is already determined by $\set{\theta^i(v) : 0 \leq i < \deg(f)}$.
\end{enumerate}
These are, in some cases, the ``easiest'' examples to work with, and it can often be helpful to first work with one of these kernel configurations and then turn to the general case. By contrast, the ``hardest'' kernel configurations to work with are those for which $\set{f \in \Kp{} : 0 < C(f) < \infty}$ is infinite.

In \cite{Chi25}, we also showed that every $T^C_\theta$ is inductive (see Lemma 3.2 there). Hence, the model companion of each $T^C_\theta$ exists if and only if the class of existentially closed models of $T^C_\theta$ is elementary. In this case, the model companion is exactly the axiomatization.

\begin{fact} \label{fact_trivvivi}
    If $C$ is \textbf{trivial}, that is, if $\deg(C) = 1$, then the models of $T_\theta^C$ and $T$ are interdefinable.
    Hence, the model companion of $T_\theta^C$ is $T_\theta^C$ itself.
\end{fact}

\noindent We will often implicitly assume that $C$ is non-trivial.

\begin{notation}
    We introduce a few more notations for working with a kernel configuration $C \in \Cc$:
    \begin{enumerate}[(i)]
        \item Given $f \in \Kp{}$ with $C(f) < \infty$, we write $f^C$ instead of $f^{C(f)}$, $f^{C+k}$ instead of $f^{C(f) + k}$, and so on.
        \item Given a finite set $F \subseteq \Kp{}$ with $C(f) < \infty$ for all $f \in F$, we set \hbox{$F^C := \prod_{f \in F} f^C$}.
        \item We define the following subsets of $\Kp{}$:
        $$
        \Kp{C<\infty} := \set{f \in \Kp{} : C(f) < \infty}, \quad \Kp{0<C<\infty} := \set{f \in \Kp{} : 0 < C(f) < \infty},
        $$
        $$
        \Kp{C=0} := \set{f \in \Kp{} : C(f) = 0}, \quad \text{and} \quad \Kp{C=\infty} := \set{f \in \Kp{} : C(f) = \infty}.
        $$
    \end{enumerate}
\end{notation}

\noindent With the above, for any algebraic kernel configuration $C \in \Ccalg$, we have $\deg(C) = \deg(\mipo(C))$ and
    $$
    \mipo(C) = \prod\nolimits_{f \in \Kp{0<C<\infty}} f^C = (\Kp{0<C<\infty})^C.
    $$
\noindent One should note that any $\LKThe$-sentence that holds in $\TKvsTheC$ also holds in $T^C_\theta$ (here $\LKThe$ is the language of $K$-vector spaces with an endomorphism).
To be more precise, one has to modify the sentence accordingly, e.g., quantifiers of the form $\exists x$ must be replaced with $\exists x \in \VV$ and the formulas that define addition/scalar multiplication must be used instead of the function symbols in $\LKThe$.
In general, it also turns out that whenever a model $(\mm, \theta)$ of $T^C_\theta$ is existentially closed, $(\VV, \theta)$ is an existentially closed model of $\TKvsTheC$ (see Remark 3.4 in \cite{Chi25}).

\subsection{$C$-image-completeness}

\noindent Note that the condition of $\theta$ being a $C$-endomorphism does not (at least in the transcendental case) imply any kind of equations that involve the image of $\rho[\theta]$ for some $\rho \in K[X]$.
In Remark 2.21 in \cite{Chi25}, we discussed that it is very unlikely that considering expansions of $T_\theta$ that also impose equations on the images (or mixed equations with sums and intersections of both kernels and images) will lead to new model companions.
However, the following condition holds in any existentially closed model of $T^C_\theta$ and is fundamental to understanding the structure of these models:

\begin{definition} \label{def_C_image_comple}
    We say that an endomorphism $\theta \colon \VV \to \VV$ is \textbf{$\mathbf{C}$-image-complete} if it is a $C$-endomorphism and $\Image(f^{C+1}) = \Image(f^C)$  holds for all $f \in \Kp{C<\infty}$ (recall $\Image(f^C) := \Image(f^{C(f)}[\theta])$).
    We may call a model $(\mm, \theta) \models T_\theta$ \textbf{$\mathbf{C}$-image-complete} if the endomorphism $\theta$ is $C$-image-complete.
\end{definition}

\begin{fact} \label{fact_c_image_comple}
    The following statements hold:
    \begin{enumerate}[(i)]
        \item If $C$ is algebraic, then every $C$-endomorphism is $C$-image-complete (see Lemma 3.11 in \cite{Chi25}).
        \item If $C$ is any kernel configuration and $(\mm, \theta) \models T_\theta^C$ is existentially closed, then $(\mm, \theta)$, or equivalently $\theta$, is also $C$-image-complete (see Corollary 3.13 in \cite{Chi25}).
    \end{enumerate}
\end{fact}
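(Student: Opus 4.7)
For part (i), my plan is to invoke Bézout's identity directly. Since $C$ is algebraic, $\mipo(C)[\theta] = 0$ on $\VV$. Fix $f \in \Kp{C<\infty}$ with $k := C(f)$ and write $\mipo(C) = f^k \cdot g$ with $\gcd(f, g) = 1$ in $K[X]$ (using that $f$ is irreducible and does not divide $g$). I would then pick $a, b \in K[X]$ with $af + bg = 1$; multiplying by $f^k$ yields $af^{k+1} + b\mipo(C) = f^k$. Evaluating at $\theta$ and applying to an arbitrary $v \in \VV$ gives
$$f^k[\theta](v) = f^{k+1}[\theta](a[\theta](v)) + b[\theta](\mipo(C)[\theta](v)) = f^{k+1}[\theta](a[\theta](v)),$$
so $u := a[\theta](v)$ is a preimage of $f^k[\theta](v)$ under $f^{k+1}[\theta]$, proving the non-trivial inclusion $\Image(f^k[\theta]) \subseteq \Image(f^{k+1}[\theta])$.

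For part (ii), by (i) I may focus on $C$ transcendental. Fix $f \in \Kp{C<\infty}$ and $v \in \VV^\mm$; the goal is to find $u \in \VV^\mm$ with $f^{C+1}[\theta](u) = f^C[\theta](v)$. Since this is an existential $L_\theta$-statement with parameters in $\mm$, existential closedness reduces the task to producing some extension $(\nn, \theta') \supseteq (\mm, \theta)$ of $T^C_\theta$ that realizes it. My plan is to build $(\nn, \theta')$ in two stages: first extend $\VV^\mm$ to a larger $K[X]$-module $\VV'$ by formally adjoining an element $u$ together with $\theta' u, \ldots, \theta'^{n-1} u$ (where $n = \deg(f^{C+1})$) as a fresh $K$-linearly independent block on top of $\VV^\mm$, and then defining $\theta'^n u$ via the single relation $f^{C+1}[\theta'](u) = f^C[\theta](v)$; second, lift this enlarged vector space to an $L$-structure $\nn \models T$ with $\VV^\nn = \VV'$ extending $\mm$.

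Two things need verification. (a) The resulting $\theta'$ is still a $C$-endomorphism on $\VV'$: for $g \in \Kp{C<\infty}$ with $g \ne f$, the coprimality $\gcd(f^{C+1}, g^{C+1}) = 1$ plus a Euclidean division argument forces any new element whose $g^{C+1}$-image vanishes to already have vanishing $g^C$-image; for $g = f$, the kernel chain stability follows from a direct polynomial computation with the defining relation. (b) One must realize $\VV'$ inside some $\nn \models T$ extending $\mm$. I expect (b) to be the main obstacle, since on the pure $K[X]$-module level the construction is routine linear algebra, but embedding it back into a model of the ambient theory $T$ requires the general extension machinery developed in \cite{chini2025modeltheorygenericvector} (the same machinery used to construct $T^C_\theta$-models and presumably underlying the cited Lemma 3.10 and Corollary 3.12). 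Once such $\nn$ is in hand, existential closedness of $(\mm, \theta)$ delivers the desired $u$ in $\mm$ itself, completing $C$-image-completeness.
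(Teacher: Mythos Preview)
Note first that the paper itself does not prove this statement; it is recorded as a Fact with references to Lemma~3.10 and Corollary~3.12 of the companion paper, so there is no in-paper proof to compare against. I will therefore evaluate your proposal on its own merits.

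Your argument for part~(i) is correct and is the natural one.

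For part~(ii), your overall plan is the right one, and the verification for $g \neq f$ via coprimality is fine. However, your claim that ``for $g = f$, the kernel chain stability follows from a direct polynomial computation'' is false without an extra hypothesis. Take $K = \QQ$, $f = X$, $C$ transcendental with $C(X) = 1$ and $C(g) = \infty$ elsewhere, $\VV = \QQ$, $\theta = \Id$, and $v = 1$. Your construction adjoins $u, \theta'u$ with $\theta'^2(u) = 1$; then $w := u - \theta'u$ satisfies $\theta'^2(w) = 1 - 1 = 0$ but $\theta'(w) = \theta'u - 1 \neq 0$, so $\theta'$ is \emph{not} a $C$-endomorphism. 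The point is that here $f^C[\theta](v) = 1$ already lies in $\Image(f^{C+1}[\theta]) = \QQ$, so no extension was needed. The fix is to split into cases: if $f^C[\theta](v) \in \Image(f^{C+1}[\theta])$ in $\VV^\mm$, the existential statement is already witnessed there and you are done; otherwise your construction does produce a $C$-endomorphism. Indeed, in that second case one checks that any $w = w_0 + p[\theta'](u) \in \Ker(f^{C+1}[\theta'])$ forces $f \mid p$ (a B\'ezout argument combined with $f^C[\theta](v) \notin \Image(f^{C+1}[\theta])$), after which $f^C[\theta'](w)$ lands in $\Ker(f[\theta]) \cap \Image(f^C[\theta])$ inside $\VV^\mm$, which is $\{0\}$ precisely because $\theta$ was already a $C$-endomorphism on $\VV^\mm$.

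For step~(b), your instinct is right and the machinery is available in this paper: pass to any $\nn \succ \mm$ with $\dim(\VV^\nn/\VV^\mm) \geq \aleph_0$, realize the block $u, \theta'u, \dots, \theta'^{n-1}u$ as elements of $\VV^\nn$ linearly independent over $\VV^\mm$, and then invoke the Standard Construction (Fact~\ref{lemma_standart_construction}) to extend $\theta'$ from $\VV^\mm \oplus \spanA{u, \dots, \theta'^{n-1}u}{K}$ to all of $\VV^\nn$. This last step genuinely requires the intermediate map to already be a $C$-endomorphism, which is exactly why the case distinction above is not optional.
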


\noindent Note that the converse of (ii) in Fact \ref{fact_c_image_comple} above does not hold.
The most important consequence of $C$-image-completeness is that we can decompose $\VV$ into definable direct summands as follows.

\begin{fact}[Lemma 3.14 in \cite{Chi25}] \label{lemma_decomposition}
    If $\theta \colon \VV \to \VV$ is $C$-image-complete and $F \subseteq \Kp{0<C<\infty}$ is a finite set, then we have
    $$
    \VV = \Image(F^C) \oplus \Ker(F^C) = \Image(F^C) \oplus \bigoplus\nolimits_{f\in F} \Ker(f^C).
    $$
    If $C$ is algebraic and $F = \Kp{0<C<\infty}$, then the summand $\Image(F^C)$ is $\set{0}$ and can therefore be omitted.
\end{fact}

\noindent In \cite{Chi25}, we defined additional endomorphisms of $\VV$ using these decompositions.

\begin{notation}
    For any $\rho \in K[X] \setminus \set{0}$, we let $\Fac(\rho) := \set{f \in \Kp{} : f \mid \rho}$ denote the set of all irreducible factors of $\rho$.
    As a convention, we set $\Fac(0) = \varnothing$.
\end{notation}

\begin{fact}[Lemma 3.15 in \cite{Chi25}] \label{fact_endo_gen}
    In the theory \hbox{$\TKvsThe \cup \set{\text{``$\theta$ is $C$-image-complete''}}$}, the following endomorphisms are definable:
    \begin{enumerate}[(i)]
        \item For $F \subseteq \Kp{0<C<\infty}$ finite, we define the \textbf{projection to the image of $\bm{F^C[\theta]}$} by
        $$
        \pi_{\Image(F^C)}(x) := \text{``the unique $u \!\in\! \Image(F^C)$ for which there is $v \in \Ker(F^C)$ with $x \!=\! u \!+\! v$''.}
        $$
        \item For $F \subseteq \Kp{0<C<\infty}$ finite, we define the \textbf{projection to the kernel of $\bm{F^C[\theta]}$} by
        $$
        \pi_{\Ker(F^C)}(x) := \text{``the unique $v \!\in\! \Ker(F^C)$ for which there is $u \in \Image(F^C)$ with $x \!=\! u \!+\! v$''.}
        $$
        We clearly have $\pi_{\Ker(F^C)} = 1[\theta] - \pi_{\Image(F^C)}$.
        \item For every monic polynomial $\eta \in K[X]$ with $\Fac(\eta) \subseteq \Kp{C<\infty}$, we define the \textbf{pseudo-inverse of $\bm{\eta[\theta]}$} by
        $$
        \eta[\theta]^{-1}(x) := \text{``the unique $u \in \Image(\Fac(\eta)^C)$ with $\eta[\theta](u) = \pi_{\Image(\Fac(\eta)^C)}(x)$''.}
        $$
        Notice that $\Fac(\eta)^C = (\Fac(\eta) \cap \Kp{0<C<\infty})^C$.
        In practice, we will also use $\eta[\theta]^{-1}$ for (non-zero) non-monic polynomials by setting $\eta[\theta]^{-1} := \lambda^{-1} \cdot (\eta/\lambda)[\theta]^{-1}$ for the leading coefficient $\lambda$ of $\eta$.
    \end{enumerate}
\end{fact}

\noindent Notice that whenever $\Fac(\eta) \cap \Kp{0<C<\infty} \neq \varnothing$, we obtain $\eta[\theta] \circ \eta[\theta]^{-1} = \pi_{\Image(\Fac(\eta)^C)} \neq \Id = 1[\theta]$, so the notation might be a bit misleading. Here $\Id$ is the identity on $\VV$.
It is also easy to see that $\Id = 1[\theta] = \pi_{\Image(f^C)} + \pi_{\Ker(f^C)}$. We consider the ring generated by all $\LKThe$-definable endomorphisms we have collected so far:

\begin{fact}[Theorem 3.18 in \cite{Chi25}] \label{theorem_r_c_def}
    We let $R_C$ be the set of all endomorphisms definable in the theory $\TKvsThe \cup \set{\text{``$\theta$ is $C$-image-complete''}}$ that are $\set{+, \circ}$-generated by
    $$
    \set{\rho[\theta] : \rho \in K[X]} \cup \set{\pi_{\Image(F^C)} : F \subseteq \Kp{0<C<\infty} \text{ finite}} \cup \set{\eta[\theta]^{-1} : \eta \text{ monic with }\Fac(\eta) \subseteq \Kp{C<\infty}}.
    $$
    The structure $(R_C, 0[\theta], 1[\theta], +, \circ)$ is a unitary commutative ring with $\Char(R_C) = \Char(K)$. In fact, it can also be seen as $K$-algebra, as $K \subseteq R_C$ (identifying $\lambda \in K$ with the endomorphism $x \mapsto \lambda \cdot x$ which is $\lambda[\theta]$).
\end{fact}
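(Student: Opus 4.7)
The plan is to check the ring axioms, commutativity of $\circ$, the characteristic equality, and the $K$-algebra structure in that order. Three of the four claims are formalities; the only real content is commutativity.

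Since $(\End(\VV),+,\circ,0[\theta],1[\theta])$ is an associative unital ring and $R_C$ is closed under $+,\circ$ by construction, all ring axioms except commutativity are inherited. The map $\iota\colon K\to R_C$, $q\mapsto q[\theta]=(x\mapsto q\cdot x)$, is a ring homomorphism (by distributivity of scalar multiplication in $\VV$) and is injective because $\VV$ is a nontrivial $K$-vector space, so $q\cdot v=0$ for all $v\in\VV$ forces $q=0$. This simultaneously gives $K\hookrightarrow R_C$ as a subring, $\Char(R_C)=\Char(K)$, and the $K$-algebra structure.

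For commutativity I would show the three families of generators commute pairwise; then closure of $R_C$ under $+,\circ$ propagates commutativity to the whole ring. Two polynomials in $\theta$ commute automatically. For a mixed case involving a projection $\pi_{\Image(F^C)}$ and another generator, the central tool is the refinement of Fact \ref{lemma_decomposition}: given any finite $H\subseteq\Kp{0<C<\infty}$ containing $F$ (and, where applicable, $\Fac(\eta)\cap\Kp{0<C<\infty}$), one has $\VV=\Image(H^C)\oplus\bigoplus_{h\in H}\Ker(h^C)$, and a short Bezout argument (using coprimality of $h^{C(h)}$ and $F^C$ for $h\notin F$) shows that $\pi_{\Image(F^C)}$ is the identity on $\Image(H^C)\oplus\bigoplus_{h\in H\setminus F}\Ker(h^C)$ and zero on the remaining summands. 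Since $\rho[\theta]$ and $\eta[\theta]^{-1}$ both preserve every summand of the $H$-refinement — $\rho[\theta]$ because it commutes with each $h^C[\theta]$, and $\eta[\theta]^{-1}$ because by definition it factors through $\pi_{\Image(\Fac(\eta)^C)}$ and inverts $\eta[\theta]$ on the invariant summand $\Image(\Fac(\eta)^C)$ — the scalar ($0$ or $\Id$) action of $\pi_{\Image(F^C)}$ on each summand commutes with them.

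The remaining cases are commutation of $\eta[\theta]^{-1}$ with a polynomial $\rho[\theta]$ or another inverse $\eta'[\theta]^{-1}$. Set $G=\Fac(\eta)\cap\Kp{0<C<\infty}$. Combining $C$-image-completeness ($\Image(g^{C+k})=\Image(g^C)$) with kernel stabilization ($\Ker(g^{C+k})=\Ker(g^C)$ for $g\in\Kp{C<\infty}$, which holds in both the algebraic and transcendental cases) shows that each $g^{C+k}[\theta]$, and hence $\eta[\theta]$, is a bijection of $\Image(G^C)$. For any generator $A$ preserving $\Image(G^C)$ and commuting with $\eta[\theta]$ there (immediate for polynomials and other inverses), the identity $\eta[\theta]\circ(\eta[\theta]^{-1}A-A\eta[\theta]^{-1})=0$ on $\Image(G^C)$ together with injectivity gives $\eta[\theta]^{-1}A=A\eta[\theta]^{-1}$ there; on $\Ker(G^C)$ both sides vanish because $\eta[\theta]^{-1}$ does. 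The main obstacle is this bookkeeping around $\eta[\theta]^{-1}$: establishing the bijection claim above and verifying the preservation/commutation hypotheses on $A$, which rely on repeated use of $C$-image-completeness and coprimality arguments.
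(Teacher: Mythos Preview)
This statement appears in the paper only as a Fact cited from \cite{chini2025modeltheorygenericvector} (Theorem 3.18 there); the present paper gives no proof, so there is nothing to compare against directly. Your approach---inheriting the ring axioms from $\End(\VV)$, embedding $K$ via $q\mapsto q[\theta]$, and reducing commutativity to pairwise commutation of the three generating families using the summand decomposition of Fact~\ref{lemma_decomposition}---is the natural one and is correct in outline; it is almost certainly what the cited paper does as well.

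One point deserves care. You invoke kernel stabilization $\Ker(g^{C+k})=\Ker(g^C)$ for $g\in\Kp{C<\infty}$ and attribute it to ``both the algebraic and transcendental cases,'' i.e., to the $C$-endomorphism axioms of Definition~\ref{def_T_C_theta}. For $g\in\Kp{0<C<\infty}$ this in fact already follows from $C$-image-completeness alone: $g[\theta]$ is injective on $\Image(g^C)$ because $\Ker(g)\subseteq\Ker(g^C)$ and $\Image(g^C)\cap\Ker(g^C)=\{0\}$ by Fact~\ref{lemma_decomposition}, and this yields $\Ker(g^{C+1})=\Ker(g^C)$. For $g\in\Kp{C=0}$, however, $C$-image-completeness only gives surjectivity of $g[\theta]$; injectivity can fail (a left shift on $K^{(\omega)}$ is a counterexample), so here you genuinely need the $C$-endomorphism axiom. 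Since the Fact as recapped is stated modulo $\TKvsThe\cup\{\text{``$\theta$ is $C$-image-complete''}\}$ alone, this is a small tension in the hypotheses as restated in this paper---in every application the ambient theory is $T^C_\theta\cup\{\text{$C$-image-complete}\}$, where the $C$-endomorphism axiom is present---rather than a flaw in your strategy.
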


\noindent We may sometimes write $0$ instead of $0[\theta]$ and $\Id$ or $1$ instead of $1[\theta]$. In particular, if we regard $R_C$ purely as a ring, we may write $(R_C, 0, 1, +, \cdot)$. The ring $R_C$ may again look complicated at first, but for the ``easiest to work with'' kernel configurations, we obtain the following:
\begin{enumerate}[(i)]
    \item $(R_{C_\infty}, 0, 1, +, \cdot) \simeq (K[X], 0, 1, +, \cdot)$ for the unique kernel configuration $C_\infty \in \Cctrans$, which satisfies \hbox{$C_\infty(f) = \infty$} for all $f \in \Kp{}$.
    \item $(R_{C_0}, 0, 1, +, \cdot) \simeq (K(X), 0, 1, + , \cdot)$ for the unique kernel configuration $C_0 \in \Cctrans$ with $C_0(f) = 0$ for all $f \in \Kp{}$. 
    Notice that this is a field.
    \item $(R_{C}, 0, 1, +, \cdot) \simeq (K[X]/(\mipo(C)), 0, 1, +, \cdot)$ for all $C \in \Ccalg$.
    This also implies that our ring $(R_{C}, 0, 1, +, \cdot)$ is a field for all algebraic kernel configurations $C$ with $\mipo(C)$ being irreducible.
\end{enumerate}
Other examples of $R_C$ can be found in Corollary 3.25 in \cite{Chi25}. 
There, the case where $C$ is transcendental and $\Kp{0<C<\infty}$ is infinite again turns out to be the most complicated. Multiplication rules, such as $\rho[\theta] \circ \pi_{\Image(F^C)} = \rho[\theta]$ if $F^C \mid \rho$, can be found in Lemma 3.21 in \cite{Chi25}.  

\begin{fact}[see Remark 3.26 in \cite{Chi25}] \label{fact_when_field}
    $R_C$ is a field if and only if $C = C_0$, as in (ii) above, or if $C$ is algebraic with $\mipo(C)$ irreducible. 
\end{fact}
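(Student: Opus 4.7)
My plan is to handle $(\Leftarrow)$ directly from the three explicit descriptions of $R_C$ listed just above the statement, and to handle $(\Rightarrow)$ by exhibiting, in each remaining case, either a non-zero non-invertible element or a pair of non-zero zero divisors of $R_C$. For $(\Leftarrow)$, if $C = C_0$ then $R_C \simeq K(X)$ which is a field, and if $C \in \Ccalg$ with $\mipo(C)$ irreducible then $R_C \simeq K[X]/(\mipo(C))$ is a field because $(\mipo(C))$ is a maximal ideal.

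For $(\Rightarrow)$, I argue the contrapositive and distinguish three cases. First, if $C \in \Ccalg$ with $\mipo(C)$ reducible, any non-trivial factorization $\mipo(C) = g \cdot h$ gives $g[\theta] \circ h[\theta] = 0$ in $R_C \simeq K[X]/(\mipo(C))$ with both factors non-zero, so $R_C$ has zero divisors and is not a field. Second, if $C \in \Cctrans$ and there is some $f \in \Kp{0<C<\infty}$, the complementary projections $\pi_{\Image(f^C)}, \pi_{\Ker(f^C)} \in R_C$ satisfy $\pi_{\Image(f^C)} \circ \pi_{\Ker(f^C)} = 0$ by Fact \ref{lemma_decomposition}; I would verify that both projections are non-zero in $R_C$ by passing to an existentially closed model $(\mm, \theta) \models T^C_\theta$, in which $\Ker(f^C) \neq 0$ (the kernel chain must properly grow up through level $C(f)$ in e.c.\ models, as otherwise we would have $T^C_\theta \equiv T^{C'}_\theta$ for a smaller $C'$, contradicting Lemma 2.18 of \cite{chini2025modeltheorygenericvector}) and $\Image(f^C) \neq 0$ (otherwise $f^C[\theta] = 0$ would give a polynomial annihilating $\theta$, contradicting transcendence of $C$). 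Third, if $C \in \Cctrans \setminus \{C_0\}$ and every $C(f) \in \{0, \infty\}$, I pick $f$ with $C(f) = \infty$; then $f[\theta] \in R_C$ is non-zero by transcendence of $C$, but in any e.c.\ model $C(f) = \infty$ forces the chain $\Ker(f^k)$ to grow strictly in $k$, so $f[\theta]$ has non-trivial kernel, is not injective, and hence cannot admit an inverse in the commutative ring $R_C$.

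The step I expect to be the main obstacle is rigorously justifying the kernel-growth claims in existentially closed models---in particular, that $\Ker(f^k) \subsetneq \Ker(f^{k+1})$ for all $k < C(f)$ when $0 < C(f) \leq \infty$. This is a standard consequence of existential closedness combined with the non-equivalence of the theories $T^C_\theta$ for distinct $C$, and I expect it can be extracted directly from the constructions used in the proof of Lemma 2.18 of \cite{chini2025modeltheorygenericvector}.
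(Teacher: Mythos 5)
Your overall strategy is sound and your case split is exhaustive; note that this paper itself gives no proof (it imports the statement from Corollary~3.28 of \cite{chini2025modeltheorygenericvector}, where the classification comes out of an explicit computation of $R_C$ in all cases), whereas you avoid computing $R_C$ in the transcendental cases and instead directly exhibit zero divisors ($\pi_{\Image(f^C)} \circ \pi_{\Ker(f^C)} = 0$) or non-units ($f[\theta]$ for $C(f)=\infty$). That is a legitimate and arguably more economical route, and your $(\Leftarrow)$ direction is exactly the paper's. One framework point you use correctly but should make explicit: equality in $R_C$ is equality in \emph{all} models of $\TKvsThe \cup \set{\text{``$\theta$ is $C$-image-complete''}}$, so to show $r \neq 0$ or that $r$ has no inverse in $R_C$ it suffices to check a single existentially closed model of $\TKvsTheC$ (this is the content of Remark~3.21 of \cite{chini2025modeltheorygenericvector}, quoted before Fact~\ref{fact_when_field}).

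The genuine soft spot is the one you flag, and your proposed fix is not quite right as stated. From ``a model of $T^C_\theta$ with $\Ker(f)=\set{0}$ would also model $T^{C'}_\theta$ for a smaller $C'$'' you cannot conclude anything from $T^C_\theta \not\equiv T^{C'}_\theta$: two non-equivalent theories can share models, so Lemma~2.18 of \cite{chini2025modeltheorygenericvector} does not by itself force kernel growth in a \emph{given} existentially closed model. The correct tool is Theorem~\ref{theorem_big_characterization}: for $f \in \Kp{}$ with $C(f)>0$, the formula $f[\theta](x)=0$ is a $C$-sequence-system $S(x)$ (with compatible parameter $0$), and $x^0 \neq 0$ is bounded by $S$ and implies no non-trivial linear dependence, so every existentially closed model realizes $\exists x \in \VV : f[\theta](x)=0 \wedge x \neq 0$; this gives both $\Ker(f^C) \neq 0$ in your second case and the non-injectivity of $f[\theta]$ in your third case. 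The same theorem (using the $\ux_\li$ part available when $C$ is transcendental, or just the consistency of $\exists x: f^C[\theta](x)\neq 0$ with $T^C_\theta$) yields $\Image(f^C) \neq 0$ and $f[\theta]\neq 0$; here too your phrase ``contradicting transcendence of $C$'' needs this detour, since transcendence of $C$ does not literally forbid $f^C[\theta]=0$ in an arbitrary (non--existentially closed) model of $T^C_\theta$. With these substitutions the argument is complete.
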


\noindent Also note that the elements of $R_C$ are, as defined in Fact \ref{theorem_r_c_def}, definable functions in the theory $\TKvsThe \cup \set{\text{``$\theta$ is $C$-image-complete''}}$.
By this, we mean that the elements of $R_C$ are equivalence classes of $\LKThe$-formulas modulo the theory \hbox{$\TKvsThe \cup \set{\text{``$\theta$ is $C$-image-complete''}}$} that define an endomorphism in every model of $\TKvsThe \cup \set{\text{``$\theta$ is $C$-image-complete''}}$.
So, in order to prove $r = r'$, we need to show
$$
    r^{(\VV, \theta)} = r'^{(\VV, \theta)} \quad \text{for all $(\VV, \theta) \models \TKvsThe \cup \set{\text{``$\theta$ is $C$-image-complete''}}$},
$$
and, in order to prove $r \neq r'$, we need to find $(\VV, \theta) \models \TKvsThe \cup \set{\text{``$\theta$ is $C$-image-complete''}}$ with
$
    r^{(\VV, \theta)} \neq r'^{(\VV, \theta)}.
$
In Remark 3.20 in \cite{Chi25}, we showed that $r \neq r'$ implies $r^{(\VV, \theta)} \neq r'^{(\VV, \theta)}$ if $(\VV, \theta)$ is an existentially closed model of $\TKvsTheC$. 
In a future paper, we will see that in an existentially closed model of $T_\theta^C$, any $\LKThe$-definable endomorphism of $\VV$ is, in fact, in $R_C$.

\subsection{Existentially closed models and first-order axiomatization}

We now state the characterization the existentially closed models of $T^C_\theta$ from \cite{Chi25}. We start with the remaining ingredients:

\begin{definition}
    \label{def_param_c_sequence_system} A \textbf{parametrized $\mathbf{C}$-sequence-system} is an $\LKThe$-formula of the form
    $$
    S(\ux; \uy) = \bigwedge\nolimits_{k=1}^n f_k^{q_k}[\theta](x_{\ldd, k}) = y_k
    $$
    with $\ux := \ux_\li\ux_\ld := (x_{\lii, k} : 1 \leq k \leq m)(x_{\ldd, k} : 1 \leq k \leq n)$ and $\uy = (y_1, \dots, y_n)$ that satisfies the following conditions:
    \begin{enumerate}[(i)]
        \item If $C$ is algebraic, then $m = 0$.
        \item $f_k \in \Kp{0<C}$ and $q_k \in \set{q \in \NN : 0 < q \leq C(f_k)}$ hold for all $k \in \set{1, \dots, n}$.
    \end{enumerate}
    
\end{definition}
\noindent We will always denote parametrized $C$-sequence-systems by the letter $S$.
Given such a parametrized $C$-sequence-system $S(\ux; \uy)$, we assume that everything is as above, that is, $m$, $n$, and the $f_k$'s and $q_k$'s are defined implicitly, and we set $\ux = \ux_\li\ux_\ld$ as above.
When we partition $\ux = \ux_\li\ux_\ld$ as above, we think of:
    \begin{enumerate}[(i)]
        \item $\ux_\li$ as the linearly independent part of $\ux$, since $S(\ux; \uy)$ does not imply any linear dependencies for the sequence $(\theta^i(x_{\lii, k}) : 1 \leq k \leq m, i \in \omega)$;
        \item $\ux_\ld$ as the linearly dependent part of $\ux$, since the formula $S(\ux; \uy)$ implies that the sequence $(\theta^i(x_{\ldd, k}) : i \in \omega)$ is linearly dependent over $\spanA{y_k}{K}$ for each $k \in \set{1, \dots, n}$.
    \end{enumerate}
The names $\ux_\li$ and $\ux_\ld$ for these tuples are abbreviations for linearly independent and linearly dependent. Notice that in the algebraic case, we require $\ux_\li$ to be empty, which makes sense, as we have $\sum_{i=0}^{\deg(\mipo(C))} (\mipo(C))_i \cdot \theta^i(v) = 0$ for any $v \in \VV$ in that case.

\begin{definition}
    \label{def_c_sequence_system} \label{def_compatible} Let $S(\ux; \uy) = \bigwedge_{k=1}^n f_k^{q_k}[\theta](x_{\ldd, k}) = y_k$ be a parametrized $C$-sequence-system as in Definition \ref{def_param_c_sequence_system}, and let $(\VV, \theta) \models \TKvsThe$ be given.
\begin{enumerate}[(i)]
    \item We say that a tuple $\uu = (u_1, \dots, u_n) \in \VV$ is \textbf{compatible} with $S$ if $u_k \in \Ker(f_k^{C-q_k})$ for every $k \in \set{1, \dots, n}$ with $f_k \in \Kp{0<C<\infty}$.
    \item A \textbf{$\mathbf{C}$-sequence-system} over $(\VV, \theta)$ is an $\LKThe(\VV)$-formula of the form
    \hbox{$
    S(\ux) = S'(\ux; \uu)
    $}
    where $S'$ is a parametrized $C$-sequence-system and $\uu \in \VV$ is compatible with $S'$.
\end{enumerate}
\end{definition}

\noindent We will also denote $C$-sequence-systems over some $(\VV, \theta) \models \TKvsTheC$ by the letter $S$.
Notice that a $C$-sequence-system over $(\VV, \theta)$ is also a $C$-sequence-system over any extension $(\VV', \theta')$ that is also a model of $\TKvsThe$.

\begin{definition}[Placeholder notation] \label{def_placeholder_notation}
    Let $\ux = (x_k : k \in \kk)$ be a tuple of variables.
    We define the \textbf{placeholder sequence} \hbox{$\uxvec := (x^i_k : k \in \kk, i \in \omega)$} to be a new tuple of variables.
    We call each $x^i_k$ a \textbf{placeholder variable} or a \textbf{placeholder} for $\theta^i(x_k)$.
    We furthermore define:
    \begin{enumerate}[(i)]
        \item $\ux^i := (x_k^i : k \in \kk)$ for each $i \in \omega$, and
        \item $\xvec_k := (x^i_k : i \in \omega)$ for each $k \in \kk$.
    \end{enumerate}
    We may sometimes write $(\ux^i : i \in \omega)$ or $(\xvec_k : k \in \kk)$ instead of $\uxvec$.
    For a singleton $x$, we similarly define $\xvec := (x^i : i \in \omega)$.
    If a formula $\psi(\uxvec; \uw)$ is given, we define:
    \begin{enumerate}[(i)]
        \setcounter{enumi}{2}
        \item $\psi_\theta(\ux; \uw) := \psi((\theta^i(x_k) : k \in \kk, i \in \omega); \uw)$.
    \end{enumerate}
\end{definition}

\begin{definition} \label{def_formual_bounded}
    Let $S(\ux; \uy)= \bigwedge\nolimits_{k=1}^n f_k^{q_k}[\theta](x_{\ldd, k}) = y_k$ be a parametrized $C$-sequence-system as in Definition \ref{def_param_c_sequence_system}.
    If $\psi(\uxvec; \uw)$ is a formula, then we say $\psi(\uxvec; \uw)$ is \textbf{bounded} by $S$ if one of the following equivalent conditions holds:
        \begin{enumerate}[(i)]
            \item For all $k \in \set{1, \dots, n}$, the variable $x^i_{\ldd, k}$ does not appear in $\psi(\uxvec; \uw)$ for \hbox{$i \geq \deg(f_k^{q_k})$}.
            \item For all $k \in \set{1, \dots, n}$, the term $\theta^i(x_{\ldd, k})$ does not appear in $\psi_\theta(\ux; \uw)$ for \hbox{$i \geq \deg(f_k^{q_k})$}.
        \end{enumerate}
    We also say that $\psi_\theta(\ux; \uw)$ is \textbf{bounded} by $S$, if $\psi(\uxvec; \uw)$ is bounded by $S$. Furthermore, we say that a formula is \textbf{bounded} by a $C$-sequence-system $S$ (i.e., a parametrized $C$-sequence-system with some compatible parameters plugged in) if it is bounded by the underlying parametrized $C$-sequence-system.
\end{definition}

\noindent In practice, for a formula $\psi(\uxvec)$ to be bounded by $S$ means that no subterm of the form $\theta^i(x_{\ldd, k})$ appearing in $\psi_\theta(\ux)$ can be replaced by applying a Euclidean division with the equation $f_k^{q_k}[\theta](x_{\ldd, k}) = y_k$ from $S(\ux; \uy)$. Indeed, if $i \geq \deg(f^{q_k}_k)$, then we could replace $\theta^i(x_{\ldd, k})$ with $\chi[\theta](y_k) + r[\theta](x_{\ldd, k})$, where $\chi, r \in K[X]$ are the unique polynomials  satisfying $\chi \cdot f^{q_k}_k + r = X^i$ and $\deg(r) < \deg(f^{q_k}_k) \leq i$.

Now that we have all ingredients, we can state the characterization of existentially closed models of $T^C_\theta$:

\begin{theorem}[Theorem 3.34 in \cite{Chi25}] \label{theorem_big_characterization}
    $(\mm, \theta) \models T^C_\theta$ is existentially closed if and only if it is $C$-image-complete and 
    $$
        (\mm, \theta) \models \exists \ux \in \VV : \psi_\theta(\ux) \wedge S(\ux)
    $$
    holds for any $C$-sequence-system $S(\ux)$ over $(\VV, \theta)$ and $L(M)$-formula $\psi(\uxvec)$ that is bounded by $S$ and does not imply any finite disjunction of non-trivial linear dependencies in $\uxvec$ over $\VV$.
\end{theorem}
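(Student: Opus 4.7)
The plan is to prove the biconditional in two separate directions, with the bulk of the work in the sufficiency direction.

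\medskip
\noindent\textbf{Necessity ($\Rightarrow$).} The $C$-image-completeness of every existentially closed model of $T^C_\theta$ is precisely Fact \ref{fact_c_image_comple}(ii). For the solvability statement, given a $C$-sequence-system $S(\ux)$ over $(\mm, \theta)$ and an $L(M)$-formula $\psi(\uxvec)$ bounded by $S$ which does not imply any finite disjunction of non-trivial linear dependencies, I would construct an extension $(\nn, \theta_\nn) \supseteq (\mm, \theta)$ in $T^C_\theta$ containing a realization $\ua$ of $\psi_\theta(\ux) \wedge S(\ux)$, and then apply existential closedness. The extension is built in three layers: first, an $L$-extension of $\mm$ that realizes $\psi$ over fresh placeholder witnesses (possible precisely because $\psi$ does not force linear dependencies); second, a definition of $\theta_\nn$ on these witnesses that is free on the $\ux_\li$-part and satisfies the sequence-system equations on the $\ux_\ld$-part (boundedness by $S$ guarantees no inconsistency arises from Euclidean division); and third, completing the resulting $K[X]$-module structure to a $C$-endomorphism using the decomposition from Fact \ref{lemma_decomposition} in each $\Ker(f^C)$-summand.

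\medskip
\noindent\textbf{Sufficiency ($\Leftarrow$).} Let $(\mm, \theta)$ satisfy both conditions, let $(\nn, \theta_\nn) \supseteq (\mm, \theta)$ be an extension in $T^C_\theta$, and let $\exists \uv\,\chi(\uv)$ be an existential $L_\theta(M)$-sentence that holds in $\nn$. I first normalize: since $\theta$ is trivial off $\VV$, I may assume $\uv$ ranges in $\VV$, and rewrite $\chi$ in placeholder notation as $\chi'_\theta(\uv)$ for an $L(M)$-formula $\chi'(\uvvec)$ involving only finitely many placeholders. Fix a witness $\ub \in \VV^\nn$. For each component $b_k$, I test whether $f^q[\theta](b_k) \in \mm$ for some $f \in \Kp{0<C}$ and some $q \leq C(f)$; the minimal such $q$ (if any) isolates the ``$\ld$-part'' of $\ub$, giving parameters $\uu \in \mm$ and a $C$-sequence-system $S(\ub;\uu)$ over $(\mm,\theta)$. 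Applying Euclidean division against the equations of $S$, I rewrite $\chi'_\theta$ so that each dependent placeholder $v^i_{\ld, k}$ only appears for $i < \deg(f_k^{q_k})$, yielding a formula $\psi$ bounded by $S$. I then trim the $\li$-components so that the remaining $\theta$-orbits, together with the $\ld$-parts, are $K$-linearly independent over $\mm$ within $\nn$; this forces $\psi$ to imply no non-trivial linear dependency over $\VV^\mm$. The hypothesis now yields a realization in $\mm$, which lifts to a witness of $\exists \uv\,\chi(\uv)$ there.

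\medskip
\noindent\textbf{Main obstacle.} The hard part is the trimming-and-reduction step in sufficiency: one must simultaneously enforce boundedness by $S$ (through careful Euclidean division against the polynomials $f_k^{q_k}$) and eliminate any residual linear dependency that the original witness $\ub$ inherits from $\nn$, all while preserving the truth of the formula. The crucial tool is $C$-image-completeness of $(\mm,\theta)$: via Fact \ref{lemma_decomposition} it provides, for any relevant finite $F\subseteq \Kp{0<C<\infty}$, the decomposition $\VV^\mm = \Image(F^C) \oplus \bigoplus_{f\in F}\Ker(f^C)$, which is what lets one separate the $\li$ and $\ld$ parts cleanly at the level of $\mm$ itself and thereby produce the canonical form required to invoke the hypothesis.
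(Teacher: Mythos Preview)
This theorem is not proved in the present paper; it is recalled from \cite{chini2025modeltheorygenericvector}. However, the paper tells you exactly how the hard direction goes: it states that the proof of Theorem \ref{theorem_big_fml_preceise} ``is very similar to the proof of the direction `$\Leftarrow$' from Theorem \ref{theorem_big_characterization}''. That proof is \emph{syntactic}: one normalizes the given existential $L_\theta(M)$-formula via Fact \ref{corollary_equiv_exist_form} and Fact \ref{lemma_trafo_top} to the shape $\exists \ux\in\VV:\psi_\theta(\ux)\wedge S(\ux)$, and then runs an induction on $(\rk(S),\deg(S))\in(\NN^2,<_\Lex)$. If $\psi$ implies no linear dependency one applies the hypothesis; otherwise the implied equation is fed into the \TrafoLemma{} (Lemma \ref{lemma_trafoo}), which produces a new $C$-sequence-system of strictly smaller $(\rk,\deg)$, and one recurses.

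Your sufficiency argument is \emph{semantic}: you pick a witness $\ub$ in the extension $\nn$ and try to read off $S$ and $\psi$ from the actual $K$-linear relations among the $\theta^i(b_k)$ over $\VV^\mm$. This is a legitimate strategy, but the proposal has a real gap at the ``trimming'' step, and that gap is precisely what the \TrafoLemma{} is there to close. Two concrete problems:
\begin{itemize}
\item Testing each $b_k$ separately for $f^q[\theta](b_k)\in\mm$ does not detect \emph{mixed} relations such as $\theta^2(b_1)+3\theta(b_2)\in\VV^\mm$. Such a relation must be eliminated by substituting one variable for a combination of the others, which changes both $\psi$ and $S$; doing this repeatedly and showing it terminates is exactly the content of Lemma \ref{lemma_trafoo} together with the $(\rk,\deg)$ induction. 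Your one-line ``trim the $\li$-components'' hides the whole argument.
\item Even when $f^{q}[\theta](b_k)=u_k\in\mm$ with $q$ minimal, you have not verified that $u_k$ is \emph{compatible} with the resulting equation, i.e.\ that $u_k\in\Ker(f^{C-q})$ when $C(f)<\infty$. This is not automatic (the extension $\nn$ need not be $C$-image-complete), and the decomposition of Fact \ref{lemma_decomposition} you invoke lives in $\VV^\mm$, not in the $K[X]$-module generated by $\ub$ over $\VV^\mm$ inside $\VV^\nn$.
\end{itemize}
In the necessity direction your outline is essentially correct; the ``third layer'' is the \standartConstruction{} (Fact \ref{lemma_standart_construction}) rather than Fact \ref{lemma_decomposition}.
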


\noindent We give two examples where the characterization simplifies quite a lot:
\begin{enumerate}[(i)]
    \item Fix $f \in \Kp{}$ and let $C_f$ be the unique algebraic kernel configuration with $\mipo(C_f) = f$.
    A model $(\mm, \theta) \models T^{C_f}_\theta$ is existentially closed if and only if
    $$
        (\mm, \theta) \models \exists \ux \in \VV : \psi(\theta^0(\ux), \dots, \theta^{\deg(f)-1}(\ux))
    $$
    holds for every $L(M)$-formula $\psi(\ux^0, \dots, \ux^{\deg(f)-1})$ that does not imply any finite disjunction of non-trivial linear dependencies in $\ux^0, \dots, \ux^{\deg(f)-1}$ over $\VV$.
    This is Theorem 3.33 in \cite{Chi25}.
    
    \item Let $C_0$ be the unique transcendental kernel configuration with $C_0(f) = 0$ for all $f \in \Kp{}$.
    A model $(\mm, \theta) \models T^{C_0}_\theta$ is existentially closed if and only if $\rho[\theta]$ is invertible for every $\rho \in K[X] \setminus \set{0}$ and
    $$
        (\mm, \theta) \models \exists \ux \in \VV : \psi_\theta(\ux)
    $$
    holds for every $L(M)$-formula $\psi(\uxvec)$ that does not imply any finite disjunction of non-trivial linear dependencies in $\uxvec$ over $\VV$.
    This is Theorem 3.34 in \cite{Chi25}.
\end{enumerate}

\noindent The next step is to first-order axiomatize this characterization when possible.
For this, we need the following two families of formulas:

\begin{fact}[Lemma 3.36 in \cite{Chi25}]
    Given a parametrized $C$-sequence-system $S(\ux; \uy)$, there is a $\LKThe$-formula $\delta_S(\uy)$ such that $(\mm, \theta) \models \delta_S(\uu)$ holds if and only if $\uu$ is compatible with $S$.
\end{fact}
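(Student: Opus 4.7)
The plan is to simply read the formula off the definition of compatibility. Unpacking Definition \ref{def_compatible}, a tuple $\uu = (u_1, \dots, u_n) \in \VV^n$ is compatible with $S(\ux; \uy) = \bigwedge_{k=1}^n f_k^{q_k}[\theta](x_{\ldd,k}) = y_k$ iff $u_k \in \Ker(f_k^{C(f_k) - q_k}[\theta])$ for every index $k$ with $f_k \in \Kp{0<C<\infty}$. So I would set
$$
    \delta_S(\uy) \;:=\; \bigwedge_{k \in I} f_k^{C(f_k) - q_k}[\theta](y_k) = 0,
$$
where $I := \set{k \in \set{1,\dots,n} : f_k \in \Kp{0<C<\infty}}$ (the empty conjunction being $\top$).

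The verification is then a sanity check: for each $k \in I$, the value $C(f_k)$ is a positive integer and $q_k \leq C(f_k)$ by Definition \ref{def_param_c_sequence_system}(ii), so the exponent $C(f_k) - q_k$ is a well-defined non-negative integer, and hence $f_k^{C(f_k) - q_k} \in K[X]$ is an actual polynomial. Consequently $f_k^{C(f_k) - q_k}[\theta](y_k)$ is a genuine $\LKThe$-term (obtained from $y_k$ by finitely many applications of $\theta$, additions, and scalar multiplications), so each conjunct is an $\LKThe$-atomic formula, and $\delta_S(\uy)$ is an $\LKThe$-formula. The implication ``$\uu$ compatible with $S$'' $\Rightarrow$ ``$(\mm,\theta) \models \delta_S(\uu)$'' is immediate from the definition, since $u_k \in \Ker(f_k^{C(f_k) - q_k}[\theta])$ means precisely $f_k^{C(f_k) - q_k}[\theta](u_k) = 0$.

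There is no real obstacle here, as the lemma is essentially just introducing a name for this particular conjunction. The only point worth flagging is that indices $k$ with $f_k \in \Kp{C=\infty}$ contribute no conjunct at all (no compatibility constraint is imposed on such $u_k$), and in the algebraic case one has $I = \set{1,\dots,n}$ since $\Kp{C=\infty} = \varnothing$, so no case distinction on whether $C$ is algebraic or transcendental is needed in the construction. Note also that the statement only asks for the one-sided implication, though the formula as written actually captures compatibility on the nose.
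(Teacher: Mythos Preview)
Your proof is correct; this statement is cited in the paper as a Fact from \cite{chini2025modeltheorygenericvector} without proof, and your construction of $\delta_S(\uy)$ by reading the conjunction directly off Definition~\ref{def_compatible} is exactly the intended (and essentially only) argument.
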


\begin{definition}[see Definition 1.11 in \cite{dEl21b}] \label{def_hfour}
    We say that $T$ (with the specific choice of $\VV$) satisfies $(\operatorname{H4})$ if, for every $L$-formula $\psi(\ux; \uw)$, there is an $L$-formula $\sigma_\psi(\uw)$ such that, for all $\mm \models T$ and $\ud \in M$, we have $\mm \models \sigma_\psi(\ud)$ if and only if one of the following two equivalent conditions holds:
    \begin{enumerate}[(i)]
        \item The formula $\psi(\ux; \ud)$ implies no finite disjunction of non-trivial linear dependencies in $\ux$ over $\VV$.
        \item There are an elementary extension $\mm' \succ \mm$ and a tuple $\uv' \in \VV'$ linearly independent over $\VV$ such that $\mm' \models \psi(\uv'; \ud)$.
    \end{enumerate}
\end{definition}

\begin{theorem}[Theorem 3.39 in \cite{Chi25}] \label{theorem_first_oder}
If $T$ satisfies \Hfour{}, then $T_\theta^C$ has a model companion $T\theta^C$, i.e., a first-order axiomatization of the class of existentially closed models.
It is axiomatized by the theory \hbox{$T_\theta \cup \set{\text{``$\theta$ is $C$-image-complete''}}$} together with the sentence
$$
\forall \uw:\forall \uy \in \VV : (\sigma_\psi(\uw) \wedge \delta_S(\uy)) \rightarrow \exists \ux \in \VV : \psi_\theta(\ux; \uw) \wedge S(\ux; \uy)
$$
for every parametrized $C$-sequence-system $S(\ux; \uy)$ and every $L$-formula $\psi(\uxvec; \uw)$ that is boun\-ded by $S$.
\end{theorem}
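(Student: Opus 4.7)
The plan is to verify directly that, within models of $T^C_\theta$, the proposed axiomatization $T'$ picks out exactly the existentially closed models; since $T^C_\theta$ is inductive (Lemma 2.19 of \cite{chini2025modeltheorygenericvector}), this will suffice for $T'$ to be a model companion. The bridge between the semantic characterization in Theorem \ref{theorem_big_characterization} and our first-order schema is precisely the formula $\sigma_\psi$ provided by \Hfour{}.

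For the forward direction, let $(\mm, \theta) \models T_\theta^C$ be existentially closed. Fact \ref{fact_c_image_comple}(ii) supplies $C$-image-completeness. Fix a parametrized $C$-sequence-system $S(\ux; \uy)$, an $L$-formula $\psi(\uxvec; \uz)$ bounded by $S$, and tuples $\ud, \ub$ from $\mm$ with $\mm \models \sigma_\psi(\ud) \wedge \delta_S(\ub)$. Compatibility of $\ub$ with $S$ (the content of $\delta_S$) turns $S(\ux; \ub)$ into a $C$-sequence-system over $(\VV, \theta)$ bounding the $L(M)$-formula $\psi_\theta(\ux; \ud)$. By \Hfour{}, $\sigma_\psi(\ud)$ provides an elementary extension $\mm' \succ \mm$ and a tuple $\uw \in \VV'$ linearly independent over $\VV$ with $\mm' \models \psi(\uw; \ud)$; in particular no finite disjunction of non-trivial $K$-linear dependencies among the placeholders is implied by $\psi(\uxvec; \ud)$ modulo $\Th(\mm, \theta)$. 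Theorem \ref{theorem_big_characterization} then yields the required $\ux \in \VV$.

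For the backward direction, let $(\mm, \theta) \models T'$. $C$-image-completeness is an explicit axiom. To verify the second clause of Theorem \ref{theorem_big_characterization}, fix a $C$-sequence-system $S(\ux) = S'(\ux; \ub)$ over $(\VV, \theta)$ and an $L(M)$-formula $\psi(\uxvec) = \psi'(\uxvec; \ud)$ (with $\psi'$ an $L$-formula) bounded by $S$ that does not imply any finite disjunction of non-trivial linear dependencies over $\VV$. Compatibility of $\ub$ gives $\mm \models \delta_{S'}(\ub)$. A compactness argument applied to the non-implication hypothesis produces, in some elementary extension of $\mm$, a realization $\uw$ of $\psi'(\uxvec; \ud)$ that is linearly independent over $\VV$; by \Hfour{} this yields $\mm \models \sigma_{\psi'}(\ud)$. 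The corresponding axiom of $T'$ now delivers the existential witness demanded by Theorem \ref{theorem_big_characterization}.

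The main obstacle is the precise matching between the semantic condition of Theorem \ref{theorem_big_characterization} (``$\psi$ does not imply any finite disjunction of non-trivial linear dependencies in $\uxvec$ over $\VV$'') and its first-order counterpart $\sigma_\psi$ extracted by \Hfour{}. Both directions pass through a compactness argument that adjoins to $\psi$ the partial type expressing $K$-linear independence over $\VV$ of the finitely many placeholders occurring in $\psi$; the content of \Hfour{} is exactly that this information can be absorbed into a single $L$-formula in the parameter tuple $\uz$, which is what permits the replacement of the semantic condition by a first-order axiom.
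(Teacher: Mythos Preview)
The paper does not prove this statement here; it is recalled from the companion paper \cite{chini2025modeltheorygenericvector} (Theorem 3.42) and stated without proof. Your argument is the natural one and is correct: you reduce to the semantic characterization of Theorem \ref{theorem_big_characterization} and observe that, via compactness, the condition ``$\psi(\uxvec; \ud)$ implies no finite disjunction of non-trivial linear dependencies in $\uxvec$ over $\VV$'' is equivalent to the existence of a linearly independent realization in an elementary extension, which is precisely what $\sigma_\psi(\ud)$ encodes under \Hfour{}. Together with the fact that $\delta_S(\ub)$ captures compatibility, both directions go through. This is exactly the approach one would expect and presumably matches the original argument in \cite{chini2025modeltheorygenericvector}.
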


\begin{example} \label{examples_hfour}
    \Hfour{} holds in the following settings:
    \begin{enumerate}[(i)]
        \item The theory $\TKvs$ with the vector space $(\VV, +, 0, (\lambda \cdot)_{\lambda \in K})$ being the entire structure satisfies \Hfour{}.
        This follows easily from quantifier elimination.
        \item Any complete and model-complete o-minimal theory $T$ extending the theory of divisible ordered abelian groups, with $(\VV, +, 0, (q \cdot)_{q \in \QQ})$ being a subinterval of the line (but not necessarily a subgroup) and continuous operations, satisfies \Hfour{} if and only if there is no infinite definable family of germs of $(\VV, +, 0, (q \cdot)_{q \in \QQ})$-endomorphisms at $0_\VV$; combine Theorem 2.4 and Lemma 2.9 in \cite{Blo23}.
        \item Any complete and model-complete o-minimal expansion of $\RCF{}$ with $(\VV, +, 0, (q \cdot)_{q \in \QQ})$ given by $(R_{>0}, \cdot, 1, (x \mapsto x^q)_{q\in \QQ})$ satisfies \Hfour{} if and only if no partial exponential function is definable.
        This is a special case of (ii); see the proof of Theorem A in \cite{Blo23}.
        \item Let $\FF_q$ be a finite field with $q = p^r$.
        If $T$ expands the theory of an $\FF_q$-vector space and $\VV$ is that vector space, then $T$ satisfies \Hfour{} if and only if it eliminates $\exists^\infty$; see the proof of Theorem 5.2 in \cite{dEl21b}.

        For expansions of the theories $\ACF{}_p$, $\operatorname{SCF}_{p, e}$ ($e$ either finite or infinite), $\operatorname{Psf}_p$, $\operatorname{ACFA}_p$, and $\operatorname{DCF}_p$, this implies that whenever $\FF_q$ is contained in every model as constants, \Hfour{} holds with the $\FF_q$-vector space given by addition.
        For more details, see Example 5.10 in \cite{dEl21b}.
    \end{enumerate}
\end{example}

\noindent We give one final definition for (parametrized) $C$-sequence-system:

\begin{definition} \label{def_rk_deg_def}
    Let $S(\ux; \uy) = \bigwedge_{k=1}^n f_k^{q_k}[\theta](x_{\ldd, k}) = y_k$ be a parametrized $C$-sequence-system, as in Definition \ref{def_param_c_sequence_system} (i.e., $\ux = \ux_\li\ux_\ld$ and so on).
    We define:
    \begin{enumerate}[(i)]
        \item The \textbf{rank} of $S(\ux; \uy)$ as $\rk(S) := m = |\ux_\li|$.
        \item The \textbf{degree} of $S(\ux; \uy)$ as $\deg(S) := \sum\nolimits_{k=1}^n \deg(f_k^{q_k})$.
    \end{enumerate}
    We define the rank and degree of a $C$-sequence-system (i.e., a parametrized $C$-sequence-system with compatible parameters plugged in) to be the rank and degree of the underlying parametrized $C$-sequence-system.
\end{definition}

\noindent So, for every (parametrized) $C$-sequence-system, $(\rk(S), \deg(S))$ lies in $\NN^2$.  
Note that $(\NN^2, <_\Lex)$ is a well-ordered set, where $<_\Lex$ is the lexicographical order, which defines the first entry to be more significant than the second entry.
This allows us to do inductions on $C$-sequence-systems.

\section{Definable Sets}

We now assume that $T$ satisfies \Hfour{}, which guarantees the existence of $T\theta^C$, the model companion of $T^C_\theta$, for all kernel configurations $C \in \Cc$ by Theorem \ref{theorem_first_oder}.
The goal of this section is to describe definable sets in $T\theta^C$ as clearly as possible.

\begin{definition} \label{def_alg_pattern}
    An \textbf{algebraic pattern} in $\uw$ is an $L_\theta$-formula of the form
    $$
    \psi(y_1, \dots, y_m; \uw) := \bigwedge\nolimits_{k=1}^m \exists x \!\in\! \VV : y_k = r_k(x) \wedge \varphi_k(x; y_1, \dots, y_{k-1}; \uw),
    $$
    where each $r_k$ belongs to the ring $R_C$ (see Fact \ref{theorem_r_c_def}), and each $\varphi_k(x; y_1, \dots, y_{k-1}; \uw)$ is an $L$-formula algebraic in $x$.
    In practice, we identify an algebraic pattern with the set $Y$ it defines.
    In this case, we let $\psi_Y(y_1, \dots, y_m; \uw)$ denote the corresponding formula.
    Given a $|\uw|$-sized tuple $\ud$ in a model $(\mm, \theta)$ of $T_\theta \cup \set{\text{``$\theta$ is $C$-image-complete''}}$, we let $Y_\ud$ denote the set defined by $\psi_Y(y_1, \dots, y_m; \ud)$.
\end{definition}

\noindent With the definition above, we can already state the main theorem of this chapter:

\begin{theorem} \label{theorem_big_fml}
    Assume that $T$ satisfies \Hfour{}. Then every $L_\theta$-formula $\phi(\uw)$ is, modulo $T\theta^C$, equivalent to a finite disjunction of formulas of the form
    $$
    \exists \uy \in Y_\uw : \psi(\uy; \uw),
    $$
    where $Y$ is an algebraic pattern in $\uw$, and $\psi$ is an $L$-formula.
\end{theorem}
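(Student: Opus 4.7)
First, since $T\theta^C$ is a model companion it is model-complete, so every $L_\theta$-formula $\phi(\uw)$ is equivalent modulo $T\theta^C$ to an existential formula $\exists \ux\, \chi(\ux;\uw)$ with $\chi$ quantifier-free $L_\theta$. The coordinates of $\ux$ that range outside $\VV$ can be absorbed into an existential inside an $L$-formula, because $\theta$ vanishes outside $\VV$ by the definition of $T_\theta$, so we may assume $\ux \in \VV$. Using the placeholder notation of Definition \ref{def_placeholder_notation}, $\chi(\ux;\uw) = \psi_\theta(\ux;\uw)$ for some $L$-formula $\psi(\uxvec;\uwvec)$ (where any non-$\VV$ coordinates of $\uw$ are treated as plain parameters). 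Choose a parametrized $C$-sequence-system $S(\ux;\uy)$ isolating the linearly dependent portion $\ux_\ld$, with $\uy$ a tuple of $\LKThe$-terms in $\uw$, and apply Euclidean division to $\psi$ along the equations of $S$ so that the resulting $\psi$ is bounded by $S$ in the sense of Definition \ref{def_formual_bounded}.

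The core argument is an induction on $(\rk(S),\deg(S)) \in \NN^2$ under the lexicographic well-order $<_\Lex$. In the generic case, where $\psi(\uxvec;\uwvec)$ does not imply any finite disjunction of non-trivial linear dependencies in $\uxvec$ over $\VV$, I would use Theorem \ref{theorem_first_oder} together with the definition of $\sigma_\psi$ from \Hfour{} to see that the existential is equivalent modulo $T\theta^C$ to $\sigma_\psi(\uwvec) \wedge \delta_S(\uy)$. The finitely many terms $\theta^i(w_k)$ occurring here get pulled out as pattern entries $y_{k,i} := \theta^i(w_k)$ using the endomorphism $r_{k,i} := X^i[\theta] \in R_C$ together with the trivially algebraic $L$-formula $(x = w_k)$; this packages the formula into the target shape $\exists \uy \in Y_\uw : \psi'(\uy;\uw)$ with $\psi'$ an $L$-formula. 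In the remaining case, $\psi$ implies a non-trivial disjunction of linear dependencies; distributing $\phi$ over the disjuncts, each disjunct forces some placeholder $\theta^i(x_k)$ to equal a $K$-linear combination of the other placeholders and of the $\theta^j(w_l)$'s. Using $C$-image-completeness of models of $T\theta^C$ (Fact \ref{fact_c_image_comple}) together with the projections $\pi_{\Image(F^C)}$, $\pi_{\Ker(F^C)}$ and inverses $\eta[\theta]^{-1}$ of Fact \ref{fact_endo_gen}, I would rewrite this equation as $\theta^i(x_k) = r(u)$ for some $r \in R_C$ and a new variable $u \in \VV$ that is $L$-algebraic over the remaining placeholders and $\uw$. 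Promoting $u$ to a fresh pattern witness and back-substituting either drops a coordinate from $\ux_\li$ (lowering $\rk(S)$) or lowers an exponent $q_k$ on a coordinate of $\ux_\ld$ (lowering $\deg(S)$), so the inductive hypothesis applies to the residual formula; concatenating the new pattern entry with the pattern produced inductively completes the step.

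The main obstacle will be the non-generic step, because converting an implied linear dependence into a pattern entry $y = r(x) \wedge \varphi(x;\ldots;\uw)$ with $r \in R_C$ and $\varphi$ an $L$-formula algebraic in $x$ generally only inverts the relevant $\eta[\theta]$ up to a projection onto $\Image(\Fac(\eta)^C)$, leaving a residual summand in $\Ker(\Fac(\eta)^C)$ that must be handled via the decomposition of Fact \ref{lemma_decomposition}. The bookkeeping -- verifying strict descent in $<_\Lex$ at each substitution, ensuring that each $\varphi_i$ is genuinely $L$-algebraic in its first variable over the previously extracted $y_j$'s and $\uw$, and assembling the finitely many disjuncts from the linear-dependence alternatives into a single disjunction of target-form formulas -- is the delicate part. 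With this bookkeeping in place, the theorem follows by a straightforward recursion on $<_\Lex$.
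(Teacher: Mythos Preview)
Your overall architecture matches the paper's: reduce to an existential $\exists \ux \in \VV:\psi_\theta(\ux;\uw)\wedge S(\ux;\ldots)$ with $\psi$ bounded by $S$, induct on $(\rk(S),\deg(S))$ in lexicographic order, handle the ``no implied dependencies'' case via the axioms of $T\theta^C$, and in the other case extract a new pattern witness and lower $(\rk(S),\deg(S))$ via the Transformation Lemma. Where your proposal has a genuine gap is in the case split itself.

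You phrase the dichotomy as a property of the formula $\psi(\uxvec;\uwvec)$: either it implies no finite disjunction of non-trivial dependencies, or it does. But this is a property of $\psi(\uxvec;\ud)$ \emph{with parameters $\ud$ plugged in}, and it can flip as $\ud$ varies. So your ``non-generic'' branch does not give you a single fixed disjunction of dependencies to distribute over; different values of $\uw$ may force different (or no) dependencies, and you need the resulting equivalence to be a formula in $\uw$. The paper resolves this with Lemma~\ref{lemma_hfour_fml}, which shows that under \Hfour{} one may take $\sigma_\psi(\uw)$ of the shape $\exists\ux\in\VV:\psi(\ux;\uw)\wedge\bigwedge_k\neg\varphi_k(\sum_l\lambda_{k,l}x_l;\uw)$ with each $\varphi_k(y;\uw)$ $L$-algebraic in $y$. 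This yields a \emph{uniform} finite decomposition $\psi\equiv\bigvee_\ii\psi_\ii$ (the ``splitting strategy'' of Lemma~\ref{lemma_Hfour_splitting_strategy}) in which $\psi_\varnothing$ is always either inconsistent or dependency-free, and each remaining $\psi_\ii$ implies a fixed relation $\varphi_k(\sum_l\lambda_{k,l}[\theta](x_l);\uw)$ for every parameter value. Without this refinement of \Hfour{} your induction cannot be carried out as a syntactic argument modulo $T\theta^C$.

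A related imprecision appears in your non-generic step: you write that a disjunct ``forces some placeholder $\theta^i(x_k)$ to equal a $K$-linear combination of the other placeholders and of the $\theta^j(w_l)$'s.'' What is actually forced is that $\sum_l\lambda_l[\theta](x_l)$ lands in a finite $\uw$-definable set $\varphi(\VV;\uw)$; the right-hand side is \emph{$L$-algebraic over $\uw$}, not a $K[X]$-combination of the $w_l$'s. This is exactly why the new pattern coordinate is introduced via an $L$-formula $\varphi(x;y_1,\dots,y_{i-1};\uw)$ algebraic in $x$ rather than by an explicit $R_C$-term, and it is the reason algebraic patterns are defined the way they are. Once you invoke Lemma~\ref{lemma_hfour_fml} to supply these $\varphi_k$'s uniformly in $\uw$, the rest of your plan (adjoin the new witness to the pattern, apply the Transformation Lemma to get $(\rk(S'),\deg(S'))<_\Lex(\rk(S),\deg(S))$, and recurse) goes through and agrees with the paper's proof of Theorem~\ref{theorem_big_fml_preceise}.
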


\noindent In Section \ref{sec_completions_etc}, we will use the theorem above to study the completions of the model companion $T\theta^C$, as well as some quantifier elimination results; in this section, we focus only on the proof.

\subsection{Simplification up to a sub-tuple}

In a future paper, we will encounter situations where an $L_\theta$-formula $\phi(\uz; \uw)$ is given and we only want to ``simplify'' the formula in the tuple $\uw$.
We present the corresponding theorem (Theorem \ref{theorem_big_fml_preceise}) in this section and show how to prove Theorem \ref{theorem_big_fml} using it.

For some fixed theories, all definable sets are more or less built from special formulas (e.g., cells in o-minimal theories).
The following definition allows us to refine the set of $L(M)$-formulas $\psi(\uxvec)$ that we need to consider in our characterization of existentially closed models of $T\theta^C$ (see Remark \ref{rem_refine_char}):

\begin{definition} \label{def_suitable_set}
    We call a set $\dd$ of $L$-formulas $\psi(\uxvec; \uz; \uw)$, where $|\ux|$, $|\uz|$, and $|\uw|$ are finite but not fixed, \textbf{suitable} if, for any formula $\psi(\uxvec; \uz; \uw) \in \dd$, any $\mm \models T$, and any $\ud \in M$, the $L(M)$-formula $\psi(\uxvec; \uz; \ud)$ is either inconsistent or implies no finite disjunction of non-trivial linear dependencies in $\uxvec$ over $\VV$.

    We say that a suitable set $\dd$ has a \textbf{splitting strategy} if, for any $L$-formula $\psi(\uxvec; \uz; \uw)$, there is a finite set of $L$-formulas $\set{\psi_1(\uxvec; \uz; \uw), \dots, \psi_m(\uxvec; \uz; \uw)}$ such that
    $$
        T \models \forall \uz\uw : \forall \uxvec \in \VV : \psi(\uxvec; \uz; \uw) \leftrightarrow \bigvee\nolimits_{k=1}^m \psi_k(\uxvec; \uz; \uw)
    $$
    holds, and, for any $k \in \set{1, \dots, m}$, the variables that appear in $\psi_k(\uxvec; \uz; \uw)$ form a subset of the variables that appear in $\psi(\uxvec; \uz; \uw)$, and either:
    \begin{enumerate}[(i)]
        \item $\psi_k(\uxvec; \uz; \uw)$ lies in $\dd$, or
        \item there is an $L$-formula $\varphi(y; \uw)$ that defines finite subsets of $\VV$ in the variable $y$, and a tuple of polynomials $\ulambda = (\lambda_{1}, \dots, \lambda_{|\ux|}) \in K[X]^{|\ux|} \setminus \set{\uzero}$ (both $\varphi(y; \uw)$ and $\ulambda$ may depend on $k$), such that the following holds:
        $$
            T \models \forall \uz\uw : \forall \uxvec \in \VV : \psi_k(\uxvec; \uz; \uw) \rightarrow \varphi\Big(\! \sum\nolimits_{l=1}^{|\ux|}\sum\nolimits_{i=0}^{\deg(\lambda_{l})} (\lambda_{l})_i \cdot x^i_l; \uw \Big).
        $$
    \end{enumerate}
\end{definition}

\noindent In most cases, it will be sufficient to choose $\dd$ as the following suitable set, which also admits a splitting strategy if \Hfour{} holds, as will be shown in Lemma \ref{lemma_Hfour_splitting_strategy}:
$$
\Set{ \psi(\uxvec; \uz; \uw) \in L : \parbox{11.1cm}{``for all $\ud \in \mm \models T$, the formula $\psi(\uxvec; \uz; \ud)$ is either inconsistent or implies\\ ${}$\hspace{2pt} no finite disjunction of non-trivial linear dependencies in $\uxvec$ over $\VV$''}}.
$$
In Section \ref{sec_o_min_open_core}, we also present a modified set $\dd_o$ for the case where $T$ is an o-minimal theory.
This set $\dd_o$ refines the set above so that the condition ``$\psi(\uxvec; \uz; \ud)$ is either inconsistent or implies no finite disjunction of non-trivial linear dependencies in $\uxvec$ over $\VV$'' holds locally with respect to the topology generated by the order.
We then use $\dd_o$ in combination with Theorem \ref{theorem_big_fml_preceise} below to prove that $T\theta^C$ has an o-minimal open core.

\begin{definition} \label{def_alg_pat_comp}
    Let $Y$ be an algebraic pattern in $\uw$ defined by $\psi_Y(\uy; \uw)$, let $\pi$ be a coordinate projection, and let $S(\ux; \tiluy)$ be a parametrized $C$-sequence-system.
    We say that the pair $(Y, \pi)$ is \textbf{compatible} with $S$ if
    $$
    T_\theta \cup \set{\text{``$\theta$ is $C$-image-complete''}} \models \forall\uw : \forall \uy \in Y_\uw : \text{``$\pi(\uy)$ is compatible with $S$''}.
    $$
\end{definition}

\noindent We are now ready to state the version of Theorem \ref{theorem_big_fml} that only simplifies $L_\theta$-formulas $\phi(\uz;\uw)$ in the tuple $\uw$.

\begin{theorem} \label{theorem_big_fml_preceise}
    \newcounter{savedtheorem55}
    \setcounter{savedtheorem55}{\value{theorem}}
    Suppose $\dd$ is a suitable set of $L$-formulas that has a splitting strategy.
    Any existential $L_\theta$-formula $\phi(\uz;\uw)$ is, modulo $T_\theta \cup \set{\text{``$\theta$ is $C$-image-complete''}}$, equivalent to a finite disjunction of formulas of the form
    $$
    \exists \uy \in Y_\uw : \exists \ux \in \VV : \psi_\theta(\ux; \uz; \uy\uw) \wedge S(\ux; \pi(\uy)), \quad \text{where}
    $$
    \begin{enumerate}[(i)]
        \item $S(\ux; \tiluy)$ is a parametrized $C$-sequence-system;
        \item $\psi(\uxvec; \uz; \uy\uw) \in \dd$ is bounded by $S$;
        \item $Y$ is an algebraic pattern in $\uw$, $\pi$ is a projection, and $(Y, \pi)$ is compatible with $S$.
    \end{enumerate}
\end{theorem}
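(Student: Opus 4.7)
The plan is to induct on the lexicographic pair $(\rk(S), \deg(S))$, well-ordered by Definition~\ref{def_rk_deg_def}, where $S$ is an auxiliary parametrized $C$-sequence-system tracked alongside the formula. First, any existential $L_\theta$-formula $\phi(\uz;\uw)$ is put into the preliminary form $\exists \ux \in \VV : \psi_\theta(\ux; \uz; \uy\uw) \wedge S(\ux; \pi(\uy))$ by pulling out the existential quantifier, introducing placeholders for the $\theta$-iterates of $\ux$, and absorbing $\theta$-iterates of $\uw$ into an initially trivial algebraic pattern $Y$ via elements of $R_C$ (Fact~\ref{theorem_r_c_def}). In the algebraic case a single Euclidean division by $\mipo(C)$ yields a nontrivial starting $S$. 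Throughout the induction we maintain the invariants that $(Y, \pi)$ is compatible with $S$ (Definition~\ref{def_compatible}) and that $\psi$ is bounded by $S$ (Definition~\ref{def_formual_bounded}).

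At each step we apply the splitting strategy of $\dd$ to $\psi(\uxvec; \uz; \uy\uw)$, writing $\psi = \bigvee_{j=1}^N \psi_j$, and handle each disjunct separately. If $\psi_j \in \dd$ (case (i)), then since the splitting strategy uses only variables already present in $\psi$, boundedness by $S$ is inherited and the disjunct already has the target shape. Otherwise $\psi_j$ forces a nontrivial combination $\sum_l \lambda_l[\theta](x_l)$ to lie in the finite set defined by some $L$-formula $\varphi(v; \uy\uw)$ (case (ii)); we introduce a fresh algebraic witness $y^*$ of that sum, extend $Y$ by one clause (using $r = \Id \in R_C$ together with $\varphi$, which is allowed by Definition~\ref{def_alg_pattern} since $\varphi$ is algebraic in its first argument), and use the equation $\sum_l \lambda_l[\theta](x_l) = y^*$ to refine $S$.

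The refinement of $S$ is the crux. After factoring each nonzero $\lambda_l$ along its $\Kp{0<C<\infty}$-part and applying the projections $\pi_{\Image(F^C)}$, $\pi_{\Ker(F^C)}$ and the inverses $\eta[\theta]^{-1}$ from Fact~\ref{fact_endo_gen} (all available under $C$-image-completeness), the equation $\sum_l \lambda_l[\theta](x_l) = y^*$ yields finitely many new equations of the form $f_k^{q_k}[\theta](x_{\ldd,k}') = y_k'$ required by Definition~\ref{def_param_c_sequence_system}, with each $x_{\ldd,k}'$ an $R_C$-combination of the existing $\ux$ and each $y_k'$ an $R_C$-combination of the enlarged $\uy$ (hence absorbable into a further enlargement of $Y$). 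Appending these equations to $S$ converts at least one $\li$-coordinate to an $\ld$-coordinate, strictly decreasing $\rk(S)$. Compatibility of the enlarged $(Y, \pi)$ with the new $S$ follows from Fact~\ref{lemma_decomposition}, since each $y_k'$ lies in $\Image(F^C) \subseteq \Ker(f_k^{C-q_k})$ by $C$-image-completeness; boundedness is preserved by Euclidean division against the newly appended $f_k^{q_k}$, which introduces only placeholders already present in $\psi$.

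The main obstacle is the degenerate subcase where the dependency involves only coordinates already in $\ux_\ld$: here $\rk(S)$ cannot decrease, and the constraint becomes a purely algebraic condition on $\pi(\uy)$ that must be absorbed into $Y$ without touching $S$. Termination in this regime rests on a secondary bookkeeping: because the $\theta$-orbits of $\ux_\ld$-coordinates are bounded by the $\deg(f_k^{q_k})$ already in $S$, only finitely many linearly independent such constraints can arise before the splitting strategy must deliver a case-(i) disjunct. Once this is handled, the inductive hypothesis applies to each reduced disjunct and yields the required finite disjunctive normal form.
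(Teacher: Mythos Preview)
Your overall architecture matches the paper's: reduce to the normalized shape via Lemma~\ref{lemma_exist_fml_reduc}, then induct on $(\rk(S),\deg(S))$, apply the splitting strategy, and in case~(ii) enlarge the algebraic pattern by a witness $y^*$ for the forced linear combination. The divergence is in how you refine $S$ after adjoining the equation $E(\ux;y^*)\colon \sum_l \lambda_l[\theta](x_l)=y^*$, and your treatment of the ``degenerate'' subcase is where the argument breaks.

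When $E$ involves only $\ux_\ld$-coordinates, you claim the constraint ``becomes a purely algebraic condition on $\pi(\uy)$ that must be absorbed into $Y$ without touching $S$.'' This is false. Boundedness by $S$ means each $\lambda_l$ appearing against $x_{\ldd,k}$ has degree strictly below $\deg(f_k^{q_k})$, so $\lambda_l[\theta](x_{\ldd,k})$ is \emph{not} determined by $y_k=f_k^{q_k}[\theta](x_{\ldd,k})$; the equation $E$ is a genuine new constraint on $\ux$, not on $\uy$. Consequently your termination story (``only finitely many linearly independent such constraints can arise before the splitting strategy must deliver a case-(i) disjunct'') has no content: re-applying the splitting strategy to the same $\psi_j$ with an enlarged $Y$ but unchanged $S$ reproduces the same case-(ii) disjunct, and nothing forces eventual arrival in case~(i).

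What actually happens in this subcase is that $\deg(S)$ strictly decreases while $\rk(S)$ stays fixed. The paper does not redo this by hand; it invokes the \TrafoLemma{} (Lemma~\ref{lemma_trafoo}), which packages exactly this: since $E$ is non-trivial in $\ux$ and bounded by $S$, the conjunction $S\wedge E$ is transformable into $\varphi'\wedge S'$ with $(\rk(S'),\deg(S'))<_{\Lex}(\rk(S),\deg(S))$. Concretely, a non-trivial bounded equation among the $\ux_\ld$-variables lets one replace some $f_{k_0}^{q_{k_0}}[\theta](x_{\ldd,k_0})=y_{k_0}$ by an equation of strictly smaller degree, dropping $\deg(S)$. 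The paper then uses Fact~\ref{lemma_main_trafo} to transport $\psi_j$ across the transformation and Lemma~\ref{lemma_extend_alg_pattern_by_eq_and_tuples} to absorb the resulting $\LRC$-equations $\varphi'$ and $\LRC$-terms $\umu,\ut$ back into the algebraic pattern, restoring the inductive shape. Your sketch omits all three of these ingredients; the first is the essential one your degenerate-case argument needs to replace.
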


\noindent Arguably, the theorem above looks nothing like Theorem \ref{theorem_big_fml}.
However, by letting $\uz$ be empty and applying the axioms of $T\theta^C$, we can almost prove Theorem \ref{theorem_big_fml}:

\begin{proof}[Proof of Theorem \ref{theorem_big_fml}]
    Note that this proof assumes that there is a suitable set $\dd$ with a splitting strategy.
    This will be shown in Lemma \ref{lemma_Hfour_splitting_strategy}.

    Since $T\theta^C$ is model-complete, every $L_\theta$-formula is equivalent to an existential one.
    Hence, by Theorem \ref{theorem_big_fml_preceise}, any formula $\phi(\uw)$ is, modulo $T\theta^C$, equivalent to a finite disjunction of formulas of the form
    $$
    \exists \uy \in Y_\uw : \exists \ux \in \VV : \psi_\theta(\ux; \uy\uw) \wedge S(\ux; \pi(\uy)), \quad \text{where}
    $$
    \begin{enumerate}[(i)]
        \item $S(\ux; \uy)$ is a parametrized $C$-sequence-system;
        \item $\psi(\uxvec; \uy\uw) \in \dd$ is bounded by $S$.
        Here, $\psi(\uxvec; \uy\uw) \in \dd$ means that for any $\mm \models T$ and tuples $\uu \in \VV$ and $\ud \in M$, the formula $\psi(\uxvec; \uu\ud)$ is either inconsistent or implies no finite disjunction of non-trivial linear dependencies in $\uxvec$ over $\VV$;
        \item $Y$ is an algebraic pattern in $\uw$, $\pi$ is a projection, and $(Y, \pi)$ is compatible with $S$.
        This means that for any $\ud \in (\mm, \theta) \models T_\theta \cup \set{\text{``$\theta$ is $C$-image-complete''}}$ and $\uu \in Y_\ud$, the tuple $\pi(\uu)$ is compatible with $S$, i.e., $S(\ux; \pi(\uu))$ is a $C$-sequence-system.
    \end{enumerate}
    With our characterization of existentially closed models of $T^C_\theta$ (see Theorem \ref{theorem_big_characterization}), we immediately see that any formula as above is, modulo $T\theta^C$, equivalent to
    $$
    \exists \uy \in Y_\uw : \exists \uxvec \in \VV : \psi(\uxvec; \uy\uw).
    $$
    Since $\exists \uxvec \in \VV : \psi(\uxvec; \uy\uw)$ is an $L$-formula, we conclude.
\end{proof}

\begin{remark} \label{rem_refine_char}
    Let $\dd$ be a suitable set with a splitting strategy.
    In Theorem \ref{theorem_big_characterization}, it suffices to consider $L(M)$-formulas $\psi(\uxvec)$ of the form $\psi'(\uxvec; \ud)$ for some $\psi'(\uxvec;\uw) \in \dd$ (with $\uz$ empty).
    Similarly, one can improve Theorem \ref{theorem_first_oder}.
\end{remark}

\subsection{A suitable set with a splitting strategy}

In order to complete the proof of Theorem \ref{theorem_big_fml}, we still need to prove Theorem \ref{theorem_big_fml_preceise} and show that \Hfour{} implies the existence of a suitable set of formulas with a splitting strategy.
In this section, we deal with the latter.

\begin{lemma} \label{lemma_hfour_fml}
    If $T$ satisfies \Hfour{}, then one can choose the formula $\sigma_\psi(\uw)$ (see Definition \ref{def_hfour}) for each $L$-formula $\psi(\ux; \uw)$ with $\ux = (x_1, \dots, x_n)$ to be of the form
    $$
    \exists \ux \in \VV : \psi(\ux; \uw) \wedge \bigwedge\nolimits_{k=1}^m \neg\varphi_k\Big(\sum\nolimits_{l=1}^n \lambda_{k, l} \cdot x_l; \uw\Big),
    $$
    with $(\lambda_{k, 1}, \dots, \lambda_{k, n}) \in K^n \setminus \set{\uzero}$ and $\varphi_k(y; \uw)$ algebraic in $y$ for each $k \in \set{1, \dots, m}$.
\begin{proof}
    Clearly, $\sigma_\psi(\uw)$ implies every formula of this form.
    We show that some formula of this form implies $\sigma_\psi(\uw)$.
    Suppose, toward a contradiction, that no such formula implies $\sigma_\psi(\uw)$.
    By compactness, we can find a model $\mm \models T$ and $\ud \in M$ such that $\mm \models \neg\sigma_\psi(\ud)$ and
    \begin{align}
        \mm \models \exists \ux \in \VV : \psi(\ux; \ud) \wedge \bigwedge\nolimits_{k=1}^m \neg\varphi_k\Big(\sum\nolimits_{l=1}^n \lambda_{k, l} \cdot x_l; \ud\Big) \label{tag_fml_sigma_wtf}
    \end{align}
    holds for every formula as described in the statement.
    As $\mm \models \neg\sigma_\psi(\ud)$, the partial type
    $$
    \set{\psi(\ux; \ud)} \cup \set{\text{``$\ux$ is linearly independent over $\VV$''}}
    $$
    is inconsistent.
    Thus we have
    $
    \mm \models \forall \ux \in \VV : \psi(\ux; \ud) \rightarrow \bigvee\nolimits_{k=1}^m \big(\sum\nolimits_{l=1}^{n} \lambda_{k, l} \cdot x_l\big) \in A_k
    $
    for some finite sets $A_k \subseteq \VV$ and tuples $\ulambda_k = (\lambda_{k, 1}, \dots, \lambda_{k, n}) \in K^n \setminus \set{\uzero}$.
    Because of (\ref{tag_fml_sigma_wtf}), we can choose each $A_k$ such that $A_k \cap \acl_L(\ud) = \varnothing$.
    By a standard argument, we can find $A'_1, \dots, A'_m \equiv_{\acl_L(\ud)} A_1, \dots, A_m$ in some elementary extension $\mm' \succ \mm$ such that $A'_k \cap M = \varnothing$ for all $k$; use, e.g., (1) of Proposition 1.5 in \cite{Adl09}.
    We obtain
    $$
    \mm' \models \forall \ux \in \VV : \psi(\ux; \ud) \rightarrow \bigvee\nolimits_{k=1}^m \big(\sum\nolimits_{l=1}^{n} \lambda_{k, l} \cdot x_l\big) \in A'_k.
    $$
    Now, for any $\uv \in \VV$, we cannot have $\mm' \models \psi(\uv; \ud)$, since $\sum\nolimits_{l=1}^{n} \lambda_{k, l} \cdot v_l \in \VV \subseteq M$ and $A'_k \cap M = \varnothing$.
    Since $\mm \prec \mm'$, we see that $\psi(\ux; \ud)$ is inconsistent.
    But by our definition of $\ud$, we know that $\exists \ux \in \VV : \psi(\ux; \ud)$ holds, a contradiction.
\end{proof}
\end{lemma}

\noindent In the following lemma, we have $\uxvec = (x^i_k : 1 \leq k \leq n, i \in \omega)$ as in our \placeholderNotation{}.

\begin{lemma} \label{lemma_Hfour_splitting_strategy}
    The following set is suitable for any theory $T$:
    $$
    \dd_{\operatorname{triv}} := \Set{ \psi(\uxvec; \uz; \uw) \in L : \parbox{10.0cm}{``for all $\ud \in \mm \models T$, the formula $\psi(\uxvec; \uz; \ud)$ is either inconsistent\\ ${}$\hspace{2pt} or implies no finite disjunction of non-trivial linear dependencies\\ ${}$\hspace{2pt} in $\uxvec$ over $\VV$''}}.
    $$
    If $T$ furthermore satisfies \Hfour{}, then $\dd_{\operatorname{triv}}$ also has a splitting strategy.
\begin{proof}
    It is clear that $\dd_{\operatorname{triv}}$ is suitable.
    Now assume that \Hfour{} holds and that an $L$-formula $\psi(\uxvec; \uz; \uw)$ is given with $\ux = (x_1, \dots, x_n)$.
    By Lemma \ref{lemma_hfour_fml} (and accounting for the fact that we now work with $\uxvec$ instead of $\ux$), we can choose
    $$
    \sigma_{\exists \uz :\psi(\uxvec; \uz; \uw)}(\uw) = \exists \uxvec \in \VV : \exists \uz : \psi(\uxvec; \uz; \uw) \wedge \bigwedge\nolimits_{k=1}^m \neg\varphi_k\Big(\sum\nolimits_{l=1}^n \sum\nolimits_{i=0}^{\deg(\lambda_{k, l})} (\lambda_{k, l})_i \cdot x^i_l; \uw\Big),
    $$
    where each $(\lambda_{k, 1}, \dots, \lambda_{k, n}) \in K[X]^n \setminus \set{\uzero}$ is a non-zero tuple of polynomials and each $\varphi_k(y; \uw)$ is algebraic in $y$.
    For every $\ii \subseteq \set{1, \dots, m}$, define
    \begin{align*}
        \psi_\ii(\uxvec; \uz; \uw) := \psi(\uxvec; \uz; \uw) &\wedge \bigwedge\nolimits_{k\in \ii} \varphi_k\Big(\sum\nolimits_{l=1}^n \sum\nolimits_{i=0}^{\deg(\lambda_{k, l})} (\lambda_{k, l})_i \cdot x^i_l; \uw\Big)\\
        &\wedge \bigwedge\nolimits_{k \in \set{1, \dots, m} \setminus \ii} \neg \varphi_k\Big(\sum\nolimits_{l=1}^n \sum\nolimits_{i=0}^{\deg(\lambda_{k, l})} (\lambda_{k, l})_i \cdot x^i_l; \uw\Big).
    \end{align*}
    Clearly, $\psi(\uxvec; \uz; \uw) \equiv \dot\bigvee_{\ii \subseteq \set{1, \dots, m}} \psi_\ii(\uxvec; \uz; \uw)$.
    For $\ii \neq \varnothing$, we immediately see that $\psi_\ii(\uxvec; \uz; \uw)$ is as in (ii) of Definition \ref{def_suitable_set} (note that we can apply Lemma \ref{lemma_hfour_fml} such that each $(\lambda_{k, l})_i$ is zero if $x^i_l$ does not appear in $\psi(\uxvec; \uz; \uw)$).
    It now suffices to show that $\psi_\varnothing(\uxvec; \uz; \uw) \in \dd_{\operatorname{triv}}$ in order to prove that $\dd_{\operatorname{triv}}$ has a splitting strategy.
    For this, suppose that $\ud \in \mm \models T$ is given and that $\psi_\varnothing(\uxvec; \uz; \ud)$ is consistent.
    This means that $\mm \models \sigma_{\exists \uz :\psi(\uxvec; \uz; \uw)}(\ud)$ holds, and hence that $\exists \uz :\psi(\uxvec; \uz; \ud)$ implies no finite disjunction of non-trivial linear dependencies in $\uxvec$ over $\VV$.
    This is equivalent to $\psi(\uxvec; \uz; \ud)$ not implying any finite disjunction of non-trivial linear dependencies in $\uxvec$ over $\VV$.
    Since we have $\psi(\uxvec; \uz; \ud) \equiv \dot\bigvee_{\ii \subseteq \set{1, \dots, m}} \psi_\ii(\uxvec; \uz; \ud)$ and, for $\ii \neq \varnothing$, each $\psi_\ii(\uxvec; \uz; \ud)$ implies some non-trivial linear dependencies in $\uxvec$ over $\VV$, we conclude that $\psi_\varnothing(\uxvec; \uz; \ud)$ does not imply any finite disjunction of non-trivial linear dependencies in $\uxvec$ over $\VV$.
    This shows $\psi_\varnothing(\uxvec; \uz; \uw) \in \dd_{\operatorname{triv}}$ by definition.
\end{proof}
\end{lemma}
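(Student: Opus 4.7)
The plan is to observe first that suitability of $\dd_{\operatorname{triv}}$ is immediate, since its defining membership condition is literally the suitability condition of Definition \ref{def_suitable_set}. The real content is constructing a splitting strategy under \Hfour{}, and I would do so by directly exploiting the special shape of $\sigma_\psi$ supplied by Lemma \ref{lemma_hfour_fml}.

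Given an arbitrary $L$-formula $\psi(\uxvec; \uz; \uw)$ (which only involves finitely many of the placeholder variables $x^i_l$), the first step is to apply Lemma \ref{lemma_hfour_fml} to $\exists \uz : \psi(\uxvec; \uz; \uw)$, treating $\uxvec$ as the variable tuple and $\uw$ as the parameter tuple. Grouping the resulting scalar coefficients by the index $l$ of the underlying variables $x_l$, this produces non-zero polynomial tuples $(\lambda_{k,1}, \dots, \lambda_{k,n}) \in K[X]^n \setminus \set{\uzero}$ and $L$-formulas $\varphi_k(y; \uw)$ algebraic in $y$, for $k = 1, \dots, r$, such that
$$
\sigma_{\exists \uz : \psi(\uxvec; \uz; \uw)}(\uw) = \exists \uxvec \in \VV : \exists \uz : \psi(\uxvec; \uz; \uw) \wedge \bigwedge_{k=1}^r \neg \varphi_k(L_k(\uxvec); \uw),
$$
where $L_k(\uxvec) := \sum_l \sum_i (\lambda_{k,l})_i \cdot x^i_l$ is the $K$-linear combination encoded by the polynomial tuple.

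Second, I would split $\psi$ by cases according to which of these algebraicity conditions are enforced: for each $\ii \subseteq \set{1, \dots, r}$, set
$$
\psi_\ii(\uxvec; \uz; \uw) := \psi(\uxvec; \uz; \uw) \wedge \bigwedge_{k \in \ii} \varphi_k(L_k(\uxvec); \uw) \wedge \bigwedge_{k \notin \ii} \neg \varphi_k(L_k(\uxvec); \uw).
$$
These partition $\psi$ (so $\psi \equiv \dot\bigvee_\ii \psi_\ii$, with each $\psi_\ii$ using only variables already appearing in $\psi$), and for every nonempty $\ii$ any $k \in \ii$ provides a polynomial tuple and an algebraic formula witnessing condition (ii) of Definition \ref{def_suitable_set}.

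The main obstacle is to verify that the remaining piece $\psi_\varnothing$ actually lies in $\dd_{\operatorname{triv}}$. Fix $\ud \in \mm \models T$ with $\psi_\varnothing(\uxvec; \uz; \ud)$ consistent; by the displayed form of $\sigma$, this is precisely the assertion that $\mm \models \sigma_{\exists \uz : \psi(\uxvec; \uz; \uw)}(\ud)$, which by \Hfour{} says that $\exists \uz : \psi(\uxvec; \uz; \ud)$, and hence $\psi(\uxvec; \uz; \ud)$, does not imply any finite disjunction of non-trivial linear dependencies in $\uxvec$ over $\VV$. Now every $\psi_\ii$ with $\ii \neq \varnothing$ does imply such a dependency (each $\varphi_k(L_k(\uxvec); \ud)$ forces $L_k(\uxvec)$ into a finite subset of $\VV$, and $L_k$ is a non-trivial $K$-linear combination of the $x^i_l$'s), so if $\psi_\varnothing$ also implied one, the union of all these dependency disjunctions would be implied by $\psi$ itself, contradicting the previous sentence. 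Hence $\psi_\varnothing \in \dd_{\operatorname{triv}}$, completing the construction.
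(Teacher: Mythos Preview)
Your proposal is correct and follows essentially the same approach as the paper: apply Lemma \ref{lemma_hfour_fml} to $\exists \uz : \psi(\uxvec; \uz; \uw)$, split $\psi$ into the pieces $\psi_\ii$ indexed by subsets $\ii \subseteq \set{1,\dots,r}$, observe that nonempty $\ii$ fall under clause (ii), and argue that $\psi_\varnothing \in \dd_{\operatorname{triv}}$ because its consistency over some $\ud$ forces $\sigma(\ud)$ and hence $\psi(\uxvec;\uz;\ud)$ implies no dependency disjunction. The only cosmetic difference is your abbreviation $L_k(\uxvec)$ for the linear combinations, and your explicit remark that the $\psi_\ii$ use only variables already in $\psi$; otherwise the arguments coincide.
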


\noindent One can actually show that the existence of a suitable set with a splitting strategy is equivalent to \Hfour{}.

\subsection{Proof of Theorem \ref{theorem_big_fml_preceise}}

The proof of Theorem \ref{theorem_big_fml_preceise} is very similar to the proof of the direction ``$\Leftarrow$'' from Theorem \ref{theorem_big_characterization}, as presented in \cite{Chi25} (Theorem 3.34 there).
The key difference is that, in \cite{Chi25}, we worked with existential $L_\theta(M)$-formulas, while here we do not plug parameters into the formula.
This creates the need for algebraic patterns and \Hfour{}, or equivalently, for the existence of a suitable set $\dd$ with a splitting strategy.

\begin{definition}
    We define $\LRC$ as the language of (left) $R_C$-modules (with $R_C$ as in Fact \ref{theorem_r_c_def}), i.e., $\LRC = (0, +, (r)_{r \in R_C})$, where each $r \in R_C$ is treated as a unary function symbol.
\end{definition}

\noindent We will write $r(x)$ instead of $r \cdot x$, since $r$ will usually be a function such as $\pi_{\Image(F^C)}$.
Given a model $(\VV, \theta) \models \TKvsThe \cup \set{\text{``$\theta$ is $C$-image-complete''}}$, we define an $\LRC$-structure on $\VV$ using the definable functions of which $R_C$ consists (see the paragraph below Fact \ref{fact_when_field}).
We now define transformability, the main tool used in the proof of ``$\Leftarrow$'' of Theorem \ref{theorem_big_characterization}:

\begin{definition} \label{def_transfo}
    Let $E(\ux; \uy)$ and $E'(\ux'; \uy)$ be two conjunctions of $\LRC$-equations.
    We say that $E(\ux; \uy)$ is \textbf{transformable} into $E'(\ux'; \uy)$ if both
    \begin{enumerate}[(i)]
        \item $\TKvsThe \cup \set{\text{``$\theta$ is $C$-image-complete''}} \models \forall \ux\uy: E(\ux; \uy) \rightarrow E'(\underline{\nu}(\ux); \uy) \wedge \ux = \utau(\underline{\nu}(\ux); \uy)$,
        \item $\TKvsThe \cup \set{\text{``$\theta$ is $C$-image-complete''}} \models \forall \ux'\uy: E'(\ux'; \uy) \rightarrow E(\utau(\ux'; \uy); \uy)$
    \end{enumerate}
    hold for some tuple $\underline{\nu}(\ux)$ of $\LRC$-terms and some tuple $\utau(\ux'; \uy)$ with each entry $\tau_k(\ux'; \uy)$ being the sum of an $\LKThe$-term in $\ux'$ and an $\LRC$-term in $\uy$.
    We say that the tuples $\underline{\nu}(\ux)$ and $\utau(\ux'; \uy)$ \textbf{witness} the transformability.
\end{definition}

\noindent When $\utau(\ux'; \uy)$ does not depend on $\uy$, we may simply write $\utau(\ux')$. Transformability can be thought of as a stronger form of ``equivalence up to existence''.
This is because $E(\ux; \uy)$ being transformable into $E'(\ux'; \uy)$ implies 
$$
\exists \ux \in \VV : \phi(\ux; \uy) \wedge E(\ux; \uy) \quad \equiv\quad \exists \ux' \in \VV : \phi(\utau(\ux'; \uy); \uy) \wedge E'(\ux'; \uy)
$$ for every formula $\phi(\ux; \uy)$.


\noindent The following three facts have already been shown in \cite{Chi25}:

\begin{fact}[Corollary 3.6 in \cite{Chi25}]
    \label{corollary_equiv_exist_form}
    Modulo $T_\theta$, any existential $L_\theta$-formula $\phi(\uw)$ is equivalent to a formula of the form $\exists \ux \in \VV : \psi_\theta(\ux; \uw)$ for some $L$-formula $\psi(\uxvec; \uw)$ (recall our \placeholderNotation{}: $\psi_\theta(\ux; \uw)$ is $\psi(\uxvec; \uw)$ with every placeholder variable $x^i_k$ replaced by $\theta^i(x_k)$). 
\end{fact}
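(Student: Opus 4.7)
The plan is to iteratively normalize the existential formula $\phi(\uz)$ by flattening all $\theta$-subterms, case-splitting on whether arguments lie in $\VV$, and absorbing the remaining $L$-data into a single $L$-formula applied to placeholder-substituted arguments.

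First, I would reduce to $\phi(\uz) = \exists \uv : \phi_0(\uv; \uz)$ with $\phi_0$ quantifier-free in $L_\theta$. Then, by induction on the nesting depth of $\theta$ inside the terms of $\phi_0$, I replace each subterm $\theta(t)$ (where $t$ is an $L$-term in already-introduced variables) by a fresh existentially quantified variable $w$ together with the conjunct $w = \theta(t)$. After iterating to exhaustion, $\phi(\uz)$ becomes equivalent to
\[
\exists \uv\, \uw : \phi_1(\uv, \uw; \uz) \wedge \bigwedge\nolimits_{k=1}^N w_k = \theta(s_k),
\]
where $\phi_1$ is quantifier-free in $L$ and each $s_k$ is either a variable from $\uv$ or one of the earlier $w_j$.

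Next, the axioms ``$\theta(\VV) \subseteq \VV$" and ``$\theta(x) = 0$ for $x \notin \VV$" in $T_\theta$ let me rewrite each constraint as
\[
w_k = \theta(s_k) \;\longleftrightarrow\; \bigl(\Omega_\VV(s_k) \wedge w_k = \theta(s_k)\bigr) \vee \bigl(\neg\Omega_\VV(s_k) \wedge w_k = 0\bigr).
\]
Distributing across $k$ produces a finite disjunction of cases; within each case a specific subset $I \subseteq \set{1, \dots, N}$ is declared ``in $\VV$". Tracing $s_k$ backward along the chains of $w$'s, each $k \in I$ descends to a unique root variable $v \in \uv$ which is thereby forced into $\VV$ as well. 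Introducing a new variable $x_v \in \VV$ with $x_v = v$, each $w_k$ along the chain becomes $\theta^{i_k}(x_v) = x_v^{i_k}$ in the \placeholderNotation{}. All remaining variables --- the $\uv$'s not forced into $\VV$, and the $w_k$'s for $k \notin I$ (which are simply $0$) --- are ordinary $L$-variables and can be existentially quantified inside an $L$-formula $\psi$.

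Padding the tuples of placeholder roots across the different disjuncts to a common tuple $\ux$ (with dummy coordinates where unused), the disjunction folds inside $\psi$, yielding a single $L$-formula $\psi(\uxvec; \uz)$ with $\phi(\uz) \equiv \exists \ux \in \VV : \psi_\theta(\ux; \uz)$. The main bookkeeping obstacle will be the chain analysis in the preceding step: correctly identifying iterated $w$-chains over a common root $v$ with the placeholder indices $x_v^{i}$, and making sure the disjunction produced by the case-split collapses cleanly into a single formula. None of this involves deep structural content; the difficulty is purely organizational.
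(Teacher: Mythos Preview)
The paper does not actually prove this statement here; it is recorded as a Fact and cited from the prequel paper (Corollary 3.4 in \cite{chini2025modeltheorygenericvector}), so there is no in-text proof to compare against. Your flattening argument is the standard one and is correct in outline: introduce fresh variables for each occurrence of $\theta$, use the axiom $\forall x \notin \VV : \theta(x) = 0$ to case-split on membership in $\VV$, and then recognise iterated $\theta$-chains as placeholder variables $x_v^i$.

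One small inaccuracy worth cleaning up: you assert that after flattening each $s_k$ is ``either a variable from $\uv$ or one of the earlier $w_j$'', but as you described the procedure (replacing $\theta(t)$ with $t$ an $L$-term), $s_k$ is in general an $L$-term, not a bare variable. You need one extra pass introducing a fresh variable $u_k$ with the $L$-equation $u_k = t$ and then $w_k = \theta(u_k)$; the $u_k$'s get absorbed into $\uv$ and become the possible roots of your chains. Relatedly, note that any $w_j$ automatically lies in $\VV$ (since $\theta$ always outputs an element of $\VV$), so the case-split $\Omega_\VV(s_k)$ is only live when $s_k$ is a root variable; this simplifies the bookkeeping you flagged as the main obstacle. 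With these cosmetic fixes your proof goes through.
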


\begin{fact}[Lemma 4.13 in \cite{Chi25}] \label{lemma_trafo_top}
    The formula $\top$ is, as a formula in $\ux$ and $\uy$, transformable into a formula of the form
    $$
    S(\ux'; \uzero),
    $$
    where $S(\ux'; \uy')$ is a parametrized $C$-sequence-system, and $\uzero$ is treated as a tuple of $\LKThe$-terms in $\uy$. Note that the tuple $\uzero$ is compatible with any parametrized $C$-sequence-system.
\end{fact}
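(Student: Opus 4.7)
My plan is to split into cases according to whether $C$ is algebraic or transcendental, and to use the direct-sum decomposition of $\VV$ from Fact \ref{lemma_decomposition} as the guiding principle for constructing the tuples $\underline{\nu}$ and $\utau$. In both cases the tuple $\uy$ will play no role — the transformation is really between $\ux$ and a reorganized tuple $\ux'$.

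When $C$ is transcendental the situation is essentially trivial: take $n = 0$ (so the conjunction defining $S$ is empty) and $m = |\ux|$, so that $\ux' = \ux'_\li$ is a fresh tuple of ``free'' variables of the same length as $\ux$. Then $S(\ux'; \uy')$ is just $\top$, and setting $\underline{\nu}(\ux;\uy) := \ux$ and $\utau(\ux'; \uy) := \ux'$ — trivially tuples of $\LRC$- and $\LKThe$-terms respectively — verifies both clauses of Definition \ref{def_transfo} tautologically. Note that $m = |\ux| > 0$ is permitted precisely because $C$ is transcendental.

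The interesting case is $C$ algebraic, where Definition \ref{def_param_c_sequence_system} forces $m = 0$ and we must genuinely exploit the decomposition. Let $F := \Fac(\mipo(C))$; since $C$ is algebraic, $F$ is finite and coincides with $\Kp{0<C<\infty}$. Because $\mipo(C)[\theta] = 0$ on $\VV$, every $C$-endomorphism is $C$-image-complete by Fact \ref{fact_c_image_comple}(i), so Fact \ref{lemma_decomposition} applied to $F$ gives $\VV = \Image(F^C) \oplus \bigoplus_{f \in F} \Ker(f^C)$; and $\Image(F^C) = \Image(\mipo(C)[\theta]) = \{0\}$, so in fact $\VV = \bigoplus_{f \in F} \Ker(f^C)$. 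I introduce one $\ld$-variable $x'_{\ldd,(j,f)}$ for each pair $(j,f) \in \{1, \dots, |\ux|\} \times F$, take
\[
S(\ux'; \uy') := \bigwedge_{j,f} f^{C(f)}[\theta]\bigl(x'_{\ldd,(j,f)}\bigr) = y'_{(j,f)},
\]
and set $\nu_{(j,f)}(\ux;\uy) := \pi_{\Ker(f^C)}(x_j)$ (an $\LRC$-term by Fact \ref{theorem_r_c_def}) together with $\tau_j(\ux';\uy) := \sum_{f \in F} x'_{\ldd,(j,f)}$ (an $\LKThe$-term in $\ux'$ alone). Clause (1) of Definition \ref{def_transfo} then follows from $\pi_{\Ker(f^C)}(x_j) \in \Ker(f^C)$ together with the identity $x_j = \sum_f \pi_{\Ker(f^C)}(x_j)$ coming from the decomposition, while clause (2) follows from uniqueness of the direct-sum expression: if each $x'_{\ldd,(j,f')} \in \Ker(f'^C)$, then $\pi_{\Ker(f^C)}\!\bigl(\sum_{f'} x'_{\ldd,(j,f')}\bigr) = x'_{\ldd,(j,f)}$. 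I do not expect a serious obstacle — the verifications reduce to standard facts about the projections already collected in Fact \ref{fact_endo_gen} and \ref{theorem_r_c_def}; the only careful bookkeeping is confirming that $S$ is a well-formed parametrized $C$-sequence-system, which amounts to noting that each chosen $f$ lies in $\Kp{0<C}$ and that the exponent $q = C(f)$ trivially satisfies $q \leq C(f)$.
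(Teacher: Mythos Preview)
Your argument is the natural one and is essentially correct; since the paper does not reprove this Fact (it is only cited from the prequel), there is nothing to compare against directly.

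One genuine subtlety in the algebraic case: you invoke $\mipo(C)[\theta]=0$ to conclude $\Image(F^C)=\{0\}$, but the base theory in Definition~\ref{def_transfo} is literally $\TKvsThe\cup\{\text{``$\theta$ is $C$-image-complete''}\}$, which does \emph{not} entail that $\theta$ is a $C$-endomorphism. For example, with $K=\QQ$, $\mipo(C)=X^2-2$, $\VV=\QQ(t)$ and $\theta$ multiplication by the transcendental $t$, every $\rho[\theta]$ is bijective, so $\theta$ is $C$-image-complete while $\mipo(C)[\theta]=(t^2-2)\cdot\Id\neq 0$; in that model $\bigoplus_{f\in F}\Ker(f^C)=\{0\}$ and your $\tau_j$ collapses to $0$, so clause~(1) fails. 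In fact no choice of $S$ can work there: any $S(\ux';\uzero)$ with $m=0$ forces $\ux'=\uzero$, so $\utau(\ux';\uy)$ reduces to the $\LRC$-term in $\uy$, which cannot track an arbitrary $\ux$.

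This is really an imprecision in the paper rather than in your proof: the claimed isomorphism $R_C\cong K[X]/(\mipo(C))$ for algebraic $C$ already forces $\mipo(C)[\theta]=0$ in every model under consideration, so the intended base theory is evidently $\TKvsTheC\cup\{\text{``$\theta$ is $C$-image-complete''}\}$. Under that reading your verification of both clauses via the projections $\pi_{\Ker(f^C)}$ is complete and exactly the expected argument.
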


\begin{fact}[Lemma 4.6 in \cite{Chi25}] \label{lemma_main_trafo}
    Suppose a conjunction of $\LRC$-equations $E(\ux; \uy)$ is transformable into a formula of the form
    $
    \varphi(\uy) \wedge S(\ux'; \umu{}(\uy)),
    $
    where $S(\ux'; \tiluy)$ is a parametrized $C$-sequence-system, $\umu{}(\uy)$ is a tuple of $\LRC$-terms, and $\varphi(\uy)$ is a conjunction of $\LRC$-equations.
    Given any $L$-formula $\psi(\uxvec; \uw)$, there is another $L$-formula $\psi'(\uxvec{}'; \uw\tiluw)$ that is bounded by $S$, and a tuple of $\LRC$-terms $\ut(\uy)$ such that
    $$
    \exists \ux \in \VV : \psi_\theta(\ux; \uw) \wedge E(\ux; \uy) \quad \equiv \quad \varphi(\uy) \wedge \exists \ux' \in \VV : \psi'_\theta(\ux'; \uw\ut(\uy)) \wedge S(\ux'; \umu(\uy))
    $$
    holds in $T_\theta \cup \set{\text{``$\theta$ is $C$-image-complete''}}$.
\end{fact}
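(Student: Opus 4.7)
The plan is to proceed in three steps: first use the witnesses of transformability to substitute $\ux = \utau(\ux'; \uy)$ into the existential formula, then push $\theta^i$ through the specific structure of $\tau$ to separate $\ux'$-parts from $\uy$-parts, and finally apply Euclidean division against the equations of $S$ to enforce boundedness.

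First, I apply the two directions of transformability. Setting $\ux' := \underline{\nu}(\ux; \uy)$ in condition (1) produces $E'(\ux'; \uy) = \varphi(\uy) \wedge S(\ux'; \umu(\uy))$ together with $\ux = \utau(\ux'; \uy)$; conversely, condition (2) starting from $E'(\ux'; \uy)$ with $\ux := \utau(\ux'; \uy)$ recovers $E(\ux; \uy)$. Since $\varphi(\uy)$ does not involve $\ux$ or $\ux'$, it can be pulled outside the existential, yielding
\[
\exists \ux \in \VV : \psi_\theta(\ux; \uz) \wedge E(\ux; \uy) \equiv \varphi(\uy) \wedge \exists \ux' \in \VV : \psi_\theta(\utau(\ux'; \uy); \uz) \wedge S(\ux'; \umu(\uy)).
\]
It remains to rewrite the inner formula $\psi_\theta(\utau(\ux'; \uy); \uz)$ into the prescribed shape $\psi'_\theta(\ux'; \uz\ut(\uy))$ with $\psi'$ bounded by $S$.

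Second, I use that each entry $\tau_k(\ux'; \uy) = s_k(\ux') + t_k(\uy)$, with $s_k$ an $\LKThe$-term in $\ux'$ and $t_k$ an $\LRC$-term in $\uy$. By $K$-linearity of $\theta^i$,
\[
\theta^i(\tau_k(\ux'; \uy)) = \theta^i(s_k(\ux')) + \theta^i(t_k(\uy)).
\]
The first summand is another $\LKThe$-term in $\ux'$, expressible in placeholder form as a $K$-linear combination of variables $x^{'j}_l$; the second is another $\LRC$-term in $\uy$, since $\theta = X[\theta]$ lies in $R_C$ and $R_C$ is a commutative ring closed under composition. Collecting the finitely many $\LRC$-terms $\theta^i(t_k(\uy))$ that arise (only finitely many $(i,k)$ matter, since $\psi$ uses only finitely many placeholders) into a tuple $\ut(\uy)$ and introducing fresh parameter variables $\uw$ to stand for them produces an intermediate $L$-formula $\psi''(\uxvec'; \uz\uw)$ with $\psi''_\theta(\ux'; \uz\ut(\uy)) \equiv \psi_\theta(\utau(\ux'; \uy); \uz)$.

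Third, I enforce boundedness. The placeholder expressions from Step 2 may involve $x^{'I}_{\ldd,k}$ with $I \geq \deg(f_k^{q_k})$. Working under the conjunct $S(\ux'; \umu(\uy))$, I apply Euclidean division: writing $X^I = \chi(X) f_k^{q_k}(X) + r(X)$ with $\deg(r) < \deg(f_k^{q_k})$, the equation $f_k^{q_k}[\theta](x'_{\ldd,k}) = \mu_k(\uy)$ gives
\[
\theta^I(x'_{\ldd,k}) = r[\theta](x'_{\ldd,k}) + \chi[\theta](\mu_k(\uy)),
\]
where the second summand is a new $\LRC$-term in $\uy$ that is absorbed into an enlarged $\ut(\uy)$, and the first uses only placeholders $x^{'i}_{\ldd,k}$ with $i < \deg(f_k^{q_k})$. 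Iterating this reduction over all offending placeholders yields the desired $\psi'(\uxvec'; \uz\uw)$ bounded by $S$.

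The main obstacle is the bookkeeping at the interface of Steps 2 and 3: I must ensure that the rewriting is carried out purely at the level of $L$-formulas in placeholder variables (never actually introducing $\theta$ into $\psi'$), and that the cumulative substitutions produce a single fixed tuple $\ut(\uy)$ of $\LRC$-terms depending only on $\psi$, $\umu$, $s_k$, $t_k$, and the $f_k^{q_k}$'s — not on $\ux'$. This is where one exploits that $\psi$ mentions only finitely many placeholders and that the Euclidean division step is valid as an equivalence (not merely an implication) precisely because it is conjoined with $S(\ux'; \umu(\uy))$ in the final formula.
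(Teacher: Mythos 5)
This statement is cited as Lemma~4.6 of \cite{chini2025modeltheorygenericvector}, so its proof is not reproduced here, but the paper's own surrounding machinery (the consequence of transformability stated just after Definition~\ref{def_transfo}, and the explicit description of Euclidean division in the remark following Definition~\ref{def_formual_bounded}) indicates exactly the route you take. Your proof is correct and matches that intended approach: substitute $\ux=\utau(\ux';\uy)$ via the two directions of transformability and pull $\varphi(\uy)$ out of the existential; then exploit that each $\tau_k$ splits as an $\LKThe$-term in $\ux'$ plus an $\LRC$-term in $\uy$, so that $\theta^i\circ\tau_k$ decomposes into a $K$-linear combination of placeholders $x'^j_l$ plus an $\LRC$-term in $\uy$ (absorbed into $\ut(\uy)$ and represented by fresh parameter variables $\uw$); and finally, for each surviving high-degree placeholder $x'^I_{\ldd,k}$ with $I\geq\deg(f_k^{q_k})$, use the Euclidean division $X^I=\chi f_k^{q_k}+r$ together with the conjunct $S(\ux';\umu(\uy))$ to replace $\theta^I(x'_{\ldd,k})$ by $r[\theta](x'_{\ldd,k})+\chi[\theta](\mu_k(\uy))$, enlarging $\ut(\uy)$ accordingly. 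Your closing observation is the right one to flag: the Euclidean substitutions are equivalences only under the conjunct $S(\ux';\umu(\uy))$, which is retained in the target formula, and the entire rewriting is purely syntactic at the placeholder level, so $\psi'$ remains an $L$-formula and $\ut(\uy)$ depends only on $\psi$, $\umu$, the $\tau_k$'s, and the $f_k^{q_k}$'s.
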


\noindent The following lemma tells us that the conjunction of a parametrized $C$-sequence-system and some $\LKThe$-equations is again essentially transformable into a parametrized $C$-sequence-system whose rank-degree pair (recall Definition \ref{def_rk_deg_def}) is lexicographically less then or equal to that of the orignal $C$-sequence-system.
As in \cite{Chi25}, it will help us to prove Theorem \ref{theorem_big_fml_preceise} via induction on $(\rk(S), \deg(S))$.

\begin{fact}[Transformation Lemma, Lemma 4.11 in \cite{Chi25}]\label{lemma_trafoo}
    Let $S(\ux; \uy{}_1)$ be a parametrized $C$-sequence-system, let $E(\ux; \uy{}_2)$ be a conjunction of $\LKThe$-equations, and let $\umu{}_1(\uy)$ and $\umu{}_2(\uy)$ be tuples of $\LRC$-terms.
    Then there is another parametrized $C$-sequence-system $S'(\ux'; \uy')$, a conjunction $\varphi'(\uy)$ of $\LRC$-equations, and a tuple $\umu{}'(\uy)$ of $\LRC$-terms such that $\varphi'(\uy)$ implies that $\umu'(\uy)$ is compatible with $S'$, and
    $$
    \text{$S(\ux; \umu{}_1(\uy)) \wedge E(\ux; \umu{}_2(\uy))\quad$ is transformable into $\quad\varphi'(\uy) \wedge S'(\ux'; \umu{}'(\uy))$}.
    $$
    Furthermore, $(\rk(S'), \deg(S')) \leq_\Lex (\rk(S), \deg(S))$ holds, and if $E(\ux; \uy{}_2)$ contains at least one equation that is non-trivial in $\ux$ and bounded by $S$, then this inequality is strict.
\end{fact}

\noindent The next ingredient is a lemma that, roughly speaking, allows us to insert $\LRC$-equations and terms into algebraic patterns:

\begin{lemma} \label{lemma_extend_alg_pattern_by_eq_and_tuples}
    Let $Y$ be an algebraic pattern in $\uw$ defined by $\psi_Y(\uy; \uw)$, let $\varphi(\uy)$ be a conjunction of $\LRC$-equations, and let $\ut(\uy)$ be a tuple of $\LRC$-terms.
    Then there is another algebraic pattern $Y'$ in $\uw$ defined by $\psi_{Y'}(\uy'; \uw)$ with $\uy' = \uy\uy''$ for another tuple of variables $\uy''$, and a coordinate projection $\pi$ such that:
    \begin{enumerate}[(i)]
        \item $T_\theta \cup \set{\text{``$\theta$ is $C$-image-complete''}} \models \forall \uy\uw : \big(\uy \in Y_\uw \wedge \varphi(\uy)\big) \leftrightarrow \exists \uy'' : \uy\uy'' \in Y'_\uw$; and
        \item $T_\theta \cup \set{\text{``$\theta$ is $C$-image-complete''}} \models \forall \uy'\uw : \uy' \in Y'_\uw \rightarrow \pi(\uy') = \ut(\uy)$.
    \end{enumerate}
\begin{proof}
    Write $\uy = (y_1, \dots, y_n)$, $\varphi(\uy) = \bigwedge_{k=1}^{m_\varphi} \sum_{l=1}^n r_{\varphi, k, l}(y_l) = 0$, and $\ut(\uy) = (t_1(\uy), \dots, t_{m_t}(\uy))$ with $t_k(\uy) := \sum_{l=1}^n r_{t, k, l}(y_l)$.
    Define the tuples
    $$
    \uy_r := (y_{r, *, k, l} : * \in \set{\varphi, t}, 1 \leq k \leq m_*, 1 \leq l \leq n),
    $$
    $\uy_\varphi := (y_{\varphi, 1}, \dots, y_{\varphi, m_\varphi})$, $\uy_t := (y_{t, 1}, \dots, y_{t, m_t})$, and $\uy' := \uy\uy_r\uy_\varphi\uy_t$.
    The formula
    \begin{align*}
        \psi_{Y'}(\uy'; \uw) := \psi_Y(\uy; \uw) &\wedge \bigwedge\nolimits_{* \in \set{\varphi, t}}\bigwedge\nolimits_{k=1}^{m_*}\bigwedge\nolimits_{l=1}^n \exists x \in \VV : y_{r, *, k, l} = r_{*, k, l}(x) \wedge x = y_l \\
        & \wedge \bigwedge\nolimits_{k=1}^{m_\varphi} \exists x \in \VV : y_{\varphi, k} = 1[\theta](x) \wedge \Big(x = 0 \wedge x = \sum\nolimits_{l=1}^n y_{r, \varphi, k, l}\Big) \\
        & \wedge \bigwedge\nolimits_{k=1}^{m_t} \exists x \in \VV : y_{t, k} = 1[\theta](x) \wedge x = \sum\nolimits_{l=1}^n y_{r, t, k, l}
    \end{align*}
    defines an algebraic pattern.
    Together with the coordinate projection $\pi$ given by $\pi(\uy') = \uy_t$, it satisfies (i) and (ii).
\end{proof}
\end{lemma}

\begin{proof}[Proof of Theorem \ref{theorem_big_fml_preceise}]
    \newcounter{savedtheorem}
    \setcounter{savedtheorem}{\value{theorem}}
    \setcounter{theorem}{\value{savedtheorem55}}
    We start with two claims that are very similar to Claim 4.14.1 and Claim 4.14.2 in \cite{Chi25}:
\begin{subclaim} \label{lemma_exist_fml_reduc}
    Any existential $L_\theta$-formula $\phi(\uz; \uw)$ is, modulo $T_\theta \cup \set{\text{``$\theta$ is $C$-image-complete''}}$, equivalent to a formula of the form
    \begin{align*}
        \exists \uy \in Y_\uw : \exists \ux \in \VV : \psi_\theta(\ux; \uz; \uy\uw) \wedge S(\ux; \pi(\uy))
    \end{align*}
    with $S(\ux; \tiluy)$ being a parametrized $C$-sequence-system, $\psi(\uxvec; \uz; \uy\uw)$ being an $L$-formula bounded by $S$, and $(Y, \pi)$ being compatible with $S$.
\begin{innerproof}
    By Fact \ref{corollary_equiv_exist_form}, the formula $\phi(\uz; \uw)$ is, modulo $T_\theta$, equivalent to a formula of the form $\exists \ux \in \VV : \psi_\theta(\ux; \uz; \uw)$.
    By Fact \ref{lemma_trafo_top}, $\top$, as a formula in $\ux$, is transformable into $S(\ux'; \uzero)$, where $S(\ux'; \tiluy)$ is a parametrized $C$-sequence-system.
    Hence, by Fact \ref{lemma_main_trafo}, we see that $\phi(\uz; \uw)$ is equivalent to a formula of the form
    $$
    \exists \ux' \in \VV : \psi'_\theta(\ux'; \uz; \uzero\uw) \wedge S(\ux'; \uzero)
    $$
    where $\psi'(\uxvec{}'; \uz; \uy\uw)$ is an $L$-formula bounded by $S$.
    As $0$ is $L$-definable, we can assume, without loss of generality, that the two tuples $\uzero$ appearing above have the same length $m \in \NN$.
    Now define an algebraic pattern $Y$ using $\psi_Y(y_1, \dots, y_m; \uw) := \bigwedge_{k=1}^m \exists x \in \VV : y_k = 1[\theta](x) \wedge x = 0$.
    It is easy to see that $\phi(\uz;\uw)$ is equivalent to
    $$
    \exists \uy \in Y_\uw : \exists \ux' \in \VV : \psi'_\theta(\ux'; \uz; \uy\uw) \wedge S(\ux'; \Id(\uy))
    $$
    modulo $T_\theta \cup \set{\text{``$\theta$ is $C$-image-complete''}}$.
    Clearly, $(Y, \Id)$ is compatible with $S$, as $\uzero$ is compatible with any parametrized $C$-sequence-system.
\end{innerproof}
\end{subclaim}

\begin{subclaim} \label{claim_subsub}
    Let $\phi(\uz; \uw) := \exists \uy \in Y_\uw : \exists \ux \in \VV : \psi_\theta(\ux; \uz; \uy\uw) \wedge S(\ux; \pi(\uy)) \wedge E(\ux; \uy)$ be an $L_\theta$-formula, where $S(\ux; \tiluy)$ is a parametrized $C$-sequence-system, $\psi(\uxvec; \uz; \uy\uw)$ is an $L$-formula, $E(\ux; \uy)$ is a non-empty conjunction of non-trivial in $\ux$ $\LKThe$-equations that are bounded by $S$, $Y$ is an algebraic pattern, and $\pi$ is a coordinate projection such that $(Y, \pi)$ is compatible with $S$.
    Then $\phi(\uz; \uw)$ is, modulo $T_\theta \cup \set{\text{``$\theta$ is $C$-image-complete''}}$, equivalent to a formula of the form
    $$
    \exists \uy' \in Y'_\uw : \exists \ux' \in \VV : \psi'_\theta(\ux'; \uz; \uy'\uw) \wedge S'(\ux'; \pi'(\uy'))
    $$
    where $S'(\ux'; \tiluy{}')$ is another parametrized $C$-sequence-system with $(\rk(S'), \deg(S')) \!<_\Lex\! (\rk(S), \deg(S))$, $\psi'(\uxvec{}'; \uz; \uy'\uw)$ is an $L$-formula bounded by $S'$, $Y'$ is an algebraic pattern, and $\pi'$ is a coordinate projection such that $(Y', \pi')$ is compatible with $S'$.
\begin{innerproof}
    Since $E(\ux; \uy)$ contains a non-trivial $\LKThe$-equation bounded by $S$, our \TrafoLemma{} yields that $S(\ux; \pi(\uy)) \wedge E(\ux; \uy)$ is transformable into a formula of the form
    $$
    \varphi(\uy) \wedge S'(\ux';\umu{}'(\uy))
    $$
    where $S'(\ux'; \tiluy{}')$ is a parametrized $C$-sequence-system satisfying $(\rk(S'), \deg(S')) <_\Lex (\rk(S), \deg(S))$. Furthermore, the conjunction of $\LRC$-equations $\varphi(\uy)$ implies that the tuple of $\LRC$-terms $\umu{}'(\uy)$ is compatible with $S'$.
    By Fact \ref{lemma_main_trafo} and this transformability, $\phi(\uz; \uw)$ is equivalent, modulo the theory $T_\theta \cup \set{\text{``$\theta$ is $C$-image-complete''}}$, to a formula of the form
    \begin{align}
        \exists \uy \in Y_\uw : \exists \ux' \in \VV : \varphi(\uy) \wedge \psi^*_{\theta}(\ux'; \uz; \ut(\uy)\uw) \wedge S'(\ux'; \umu{}'(\uy)) \label{tag_fml_ugly}
    \end{align}
    where $\psi^*(\uxvec{}'; \uz; \uy^*\uw)$ is an $L$-formula bounded by $S'$, and $\ut(\uy)$ is a tuple of $\LRC$-terms.
    To apply Fact \ref{lemma_main_trafo}, note that, since $\uy \in Y_\uw$ implies $\uy \in \VV$, we may assume that $\uy = 1[\theta](\uy)$ is already a subtuple of $\ut(\uy)$, so $\uy$ does not need to appear separately in $\psi^*(\uxvec{}'; \uz; \uy^*\uw)$.
    Now apply Lemma \ref{lemma_extend_alg_pattern_by_eq_and_tuples} to $Y$, $\varphi(\uy)$, and $\ut(\uy)\umu{}'(\uy)$ to obtain an algebraic pattern $Y'$ in $\uw$, defined by a formula $\psi_{Y'}(\uy'; \uw)$ with $\uy' = \uy \uy''$, such that:
    \begin{enumerate}[(i)]
        \item $T_\theta \cup \set{\text{``$\theta$ is $C$-image-complete''}} \models \forall \uy\uw : \big(\uy \in Y_{\uw} \wedge \varphi(\uy)\big) \leftrightarrow \exists \uy'' : \uy\uy'' \in Y'_\uw$; and
        \item $T_\theta \cup \set{\text{``$\theta$ is $C$-image-complete''}} \models \forall \uy'\uw : \uy' \in Y'_\uw \rightarrow \big(\pi'_0(\uy') = \ut(\uy) \wedge \pi'(\uy') = \umu{}'(\uy)\big)$, where both $\pi'_0$ and $\pi'$ are coordinate projections.
    \end{enumerate}
    It follows that the formula in (\ref{tag_fml_ugly}) is, modulo $T_\theta \cup \set{\text{``$\theta$ is $C$-image-complete''}}$, equivalent to
    $$
    \exists \uy' \in Y'_\uw : \exists \ux' \in \VV : \psi'_{\theta}(\ux'; \uz; \uy'\uw) \wedge S'(\ux'; \pi'(\uy'))
    $$
    where $\psi'(\uxvec{}'; \uz; \uy' \uw) := \psi^*(\uxvec{}'; \uz; \pi'_0(\uy')\uw)$ is still an $L$-formula bounded by $S'$.
\end{innerproof}
\end{subclaim}
\noindent We can now begin the actual proof of Theorem \ref{theorem_big_fml_preceise}.
By Claim \ref{lemma_exist_fml_reduc}, it suffices to show the theorem for formulas $\phi(\uz; \uw)$ of the form
    $$
    \exists \uy \in Y_\uw : \exists \ux \in \VV : \psi_\theta(\ux; \uz; \uy\uw) \wedge S(\ux; \pi(\uy))
    $$
    with $S(\ux; \tiluy)$ being a parametrized $C$-sequence-system, $\psi(\uxvec; \uz; \uy\uw)$ being an $L$-formula bounded by $S$, and $(Y, \pi)$ being compatible with $S$.
    Since we do not need the partition of $\ux$ into $\ux_\li$ and $\ux_\ld$, we simply write $\ux = (x_1, \dots, x_n)$.
    We prove the theorem by induction on $(\rk(S), \deg(S))$.

    Fix such a formula $\phi(\uz;\uw)$ and assume that we have already shown the theorem for all formulas of this form
    $$
    \exists \uy_0 \in Y_{0,\uw} : \exists \ux_0 \in \VV : \psi_{0,\theta}(\ux_0; \uz; \uy_0\uw) \wedge S_0(\ux_0; \pi_0(\uy_0))
    $$
    with $(\rk(S_0), \deg(S_0)) <_\Lex (\rk(S), \deg(S))$.
    As $\dd$ is, by assumption in Theorem \ref{theorem_big_fml_preceise}, a suitable set with a splitting strategy, we can apply this splitting strategy to $\psi(\uxvec; \uz; \uy\uw)$.
    This yields
    $$
    T \models \forall\uz\uy\uw : \forall\uxvec\in \VV : \psi(\uxvec; \uz;\uy\uw) \leftrightarrow \bigvee\nolimits_{k=1}^m \psi_k(\uxvec; \uz; \uy\uw)
    $$
    where, for any $k \in \set{1, \dots, m}$, the variables that appear in $\psi_k(\uxvec; \uz;\uy\uw)$ are a subset of the variables that appear in $\psi(\uxvec; \uz;\uy\uw)$, and either:
    \begin{enumerate}[(i)]
        \item $\psi_k(\uxvec; \uz; \uy\uw)$ lies in $\dd$, or
        \item there is an $L$-formula $\varphi(y_*; \uy\uw)$ that defines finite subsets of $\VV$ in $y_*$, and a tuple of polynomials $\ulambda = (\lambda_{1}, \dots, \lambda_{n}) \in K[X]^{n} \setminus \set{ \uzero}$ (both $\varphi(y_*;\uy \uw)$ and $\ulambda$ may depend on $k$), such that the following holds:
        $$
            T \models \forall\uz\uy\uw : \forall \uxvec \in \VV :  \psi_k(\uxvec; \uz; \uy\uw) \rightarrow \varphi\Big(\! \sum\nolimits_{l=1}^{n}\sum\nolimits_{i=0}^{\deg(\lambda_{l})} (\lambda_{l})_i \cdot x^i_l; \uy\uw \Big).
        $$
    \end{enumerate}
    Now $\phi(\uz; \uw)$ is clearly, modulo $T_\theta \cup \set{\text{``$\theta$ is $C$-image-complete''}}$, equivalent to the disjunction
    $$
    \bigvee\nolimits_{k=1}^m \exists \uy \in Y_\uw : \exists \ux \in \VV : \psi_{k,\theta}(\ux; \uz; \uy\uw) \wedge S(\ux; \pi(\uy)).
    $$
    After discarding inconsistent disjuncts, we may assume that every $\psi_{k,\theta}(\ux; \uz; \uy\uw)$ is consistent with $\uy \in \VV$.
    It suffices to show that Theorem \ref{theorem_big_fml_preceise} holds for every disjunct in the formula above.
    For this, fix some $k \in \set{1, \dots, m}$ and distinguish between cases (i) and (ii).

    If case (i) holds for $k$, then $\exists \uy \in Y_\uw : \exists \ux \in \VV : \psi_{k,\theta}(\ux; \uz; \uy\uw) \wedge S(\ux; \pi(\uy))$ is exactly a formula as described in the statement of Theorem \ref{theorem_big_fml_preceise}.
    The boundedness of $\psi_k(\uxvec; \uz; \uy\uw)$ follows because $\psi(\uxvec; \uz; \uy\uw)$ is bounded by $S$ and $\psi_k(\uxvec; \uz; \uy\uw)$ only uses variables that appear in $\psi(\uxvec; \uz; \uy\uw)$.

    Now assume that case (ii) holds for $k$, and let $\varphi(y_*; \uy\uw)$ and $\ulambda{}$ be given as described in (ii).
    Also, let $\psi_Y(\uy;\uw)$ be the formula defining the algebraic pattern $Y$.
    Set $\uy' := \uy y_*$.
    The formula
    $$
    \psi_{Y'}(\uy'; \uw) := \psi_Y(\uy; \uw) \wedge \exists x \in \VV : \varphi(x; \uy\uw) \wedge y_* = 1[\theta](x)
    $$
    defines another algebraic pattern $Y'$ in $\uw$.
    Using the implication in (ii) and our \placeholderNotation{}, we see that $\exists \uy \in Y_\uw : \exists \ux \in \VV : \psi_{k,\theta}(\ux; \uz; \uy\uw) \wedge S(\ux; \pi(\uy))$ is equivalent to
    \begin{align}
        \exists \uy' \in Y'_{\uw} : \exists \ux \in \VV : \psi_{k,\theta}(\ux; \uz; \uy\uw) \wedge S(\ux; \pi(\uy)) \wedge \sum\nolimits_{l=1}^{n} \lambda_{l}[\theta](x_l) = y_* \label{tag_form_even_ugler}
    \end{align}
    Let $E(\ux; y_*)$ denote the equation on the right.
    A simple but crucial observation is that $E(\ux; y_*)$ is bounded by $S$.
    First, $\psi_k(\uxvec; \uz; \uy \uw)$ is bounded by $S$, since $\psi(\uxvec; \uz; \uy\uw)$ is bounded by $S$ and the variables that appear in $\psi_k(\uxvec; \uz; \uy \uw)$ are a subset of those that appear in $\psi(\uxvec; \uz; \uy\uw)$.
    Also, the implication in (ii) above cannot hold if the coefficient $(\lambda_l)_i$ is non-zero for some $(i, l)$ for which $x^i_l$ does not appear in $\psi_k(\uxvec; \uz; \uy\uw)$, because this formula is consistent.
    Since the tuple $\ulambda$ is non-zero, the $\LKThe$-equation $E(\ux; y_*)$ is also non-trivial.
    We can now apply Claim \ref{claim_subsub} to the formula (\ref{tag_form_even_ugler}), and then apply the induction hypothesis to conclude that Theorem \ref{theorem_big_fml_preceise} holds for the $k$-th disjunct.
    This finishes the induction step.

    It remains to note that in the base case, $(\rk(S), \deg(S)) = (0, 0)$, the tuple $\ux$ is empty.
    When we apply the splitting strategy to the formula $\psi(\uxvec{}; \uz; \uy\uw)$ in this case, (i) must hold for every $\psi_k(\uxvec{}; \uz; \uy\uw)$, since $K[X]^{0} \setminus \set{\uzero}$ is empty.
    \setcounter{theorem}{\value{savedtheorem}}
\end{proof}

\section{Applications of our Characterization of Definable Sets}
\label{sec_completions_etc}

In this section, we apply our characterization of definable sets from Theorem \ref{theorem_big_fml} to answer several basic questions about models of $T\theta^C$:
When do two tuples have the same type?
What are the completions of $T\theta^C$?
When is $T\theta^C$ complete?
What does the algebraic closure of a set look like?
We start with two basic definitions:

\begin{definition} \label{def_span_c}
    Let $(\VV, \theta)$ be $C$-image-complete.
    For every $U \subseteq \VV$, we define
    $$
    \spanA{U}{C} := \Set{\sum\nolimits_{i=1}^q r_i(u_i) : q \in \NN, r_1, \dots, r_q \in R_C, u_1, \dots, u_q \in U},
    $$
    where $R_C$ is the ring of definable scalars from Fact \ref{theorem_r_c_def}.
    Equivalently, $\spanA{U}{C}$ is the smallest subset of $\VV$ that contains $U$ and is closed under sums and the functions in $R_C$.
\end{definition}

\begin{definition} \label{def_cl_theater}
    Given $A \subseteq M$, where $(\mm, \theta) \models T_\theta \cup \set{\text{``$\theta$ is $C$-image-complete''}}$, we define $\cl_\theta(A)$ as the smallest set that contains $A$ and is closed under both $\acl_L$ and $\spanA{\;}{C}$.
\end{definition}

\noindent It is straightforward to check that $\spanA{\;}{C}$ is a closure operator, and hence so is $\cl_\theta$.
Moreover, $\cl_\theta(A)$ is closed under $\spanA{\;}{L}$, where $\spanA{B}{L}$ denotes the $L$-substructure generated by $B$.
Thus $\cl_\theta(A)$ is an $L$-substructure of $\mm$.
It is also closed under $\theta$:

\begin{notation}
    We write $(\cl_\theta(A), \theta)$ for the $L_\theta$-substructure $(\cl_\theta(A), \theta_{\restriction \cl_\theta(A)}) \subseteq (\mm, \theta)$.
\end{notation}

\noindent In this chapter, we consider maps from one model of $T\theta^C$ into another that are both $L$-elementary, i.e., preserve $L$-types, and $L_\theta$-isomorphisms.
We give two criteria under which $L$-elementarity follows from being an $L_\theta$-isomorphism.

\begin{remark} \label{remark_when_L_theta_iso_is_enough}
    Let $(\mm_1, \theta_1), (\mm_2, \theta_2) \models T\theta^C$, let $A_i \subseteq M_i$ for $i \in \set{1, 2}$, and let
    $$
    \iota \colon (\cl_\theta^{(\mm_1, \theta_1)}(A_1), \theta_1) \to (\cl_\theta^{(\mm_2, \theta_2)}(A_2), \theta_2)
    $$
    be an $L_\theta$-isomorphism.
    Then $\iota$ is also an $L$-elementary map with respect to $\mm_1$ and $\mm_2$ if one of the following conditions holds:
    \begin{enumerate}[(i)]
        \item $T$ admits quantifier elimination.
        \item For every model $\mm \models T$ and every subset $A \subseteq M$, we have $\acl_L(A) \models T$.
    \end{enumerate}
\begin{proof}
    If $T$ admits quantifier elimination, then any $L$-isomorphism between $L$-substructures is $L$-elementary.
    If condition (ii) holds, then $\cl_\theta^{(\mm_i, \theta_i)}(A_i)$ is a model of $T$ for each $i \in \set{1, 2}$.
    Hence the $L$-isomorphism $\iota$ is $L$-elementary with respect to the models $\cl_\theta^{(\mm_1, \theta_1)}(A_1)$ and $\cl_\theta^{(\mm_2, \theta_2)}(A_2)$.
    Since $T$ is model-complete, the inclusions of these models into $\mm_1$ and $\mm_2$, respectively, are elementary.
    Thus $\iota$ is $L$-elementary with respect to $\mm_1$ and $\mm_2$.
\end{proof}
\end{remark}

\noindent As above, an $L$-elementary map always means an $L$-elementary map with respect to the ambient $L$-models $\mm_1, \mm_2$.
It does not mean an $L$-elementary map with respect to, for example, the $L$-structures $\cl_\theta^{(\mm_1, \theta_1)}(A_1)$ and $\cl_\theta^{(\mm_2, \theta_2)}(A_2)$.

\subsection{Types in $T\theta^C$}

\begin{lemma} \label{lemma_iota_alg_pattern}
    Let $A$ and $B$ be $\cl_\theta$-closed subsets of $(\mm_1, \theta_1) \models T\theta^C$ and $(\mm_2, \theta_2) \models T\theta^C$, respectively.
    Let $\iota \colon (A, \theta_1) \to (B, \theta_2)$ be an $L_\theta$-isomorphism that is an $L$-elementary map with respect to $\mm_1$ and $\mm_2$.
    Then
    $$
    \uu \in Y_\ua \quad \Leftrightarrow \quad \iota(\uu) \in Y_{\iota(\ua)}
    $$
    holds for every algebraic pattern $Y$ and all tuples $\ua \in A$ and $\uu \in A$.
    Moreover, if $\uu \not\in A$, then $\uu \not\in Y_\ua$.
\begin{proof}
    By Definition \ref{def_alg_pattern}, the algebraic pattern $Y$ is given by an $L_\theta$-formula of the form
    $$
    \psi_Y(y_1, \dots, y_m; \uw) := \bigwedge\nolimits_{k=1}^m \exists x \!\in\! \VV : y_k = r_k(x) \wedge \varphi_k(x; y_1, \dots, y_{k-1}; \uw),
    $$
    where each $r_k$ is an element of the ring $R_C$, and each formula $\varphi_k(x; y_1, \dots, y_{k-1}; \uw)$ is an $L$-formula that is algebraic in $x$.
    Assume $\uu \in Y_\ua$, and let $v_1, \dots, v_m \in \VV_1$ be such that
    $$
    (\mm_1, \theta_1) \models \bigwedge\nolimits_{k=1}^m u_k = r_k(v_k) \wedge \varphi_k(v_k; u_1, \dots, u_{k-1}; \ua).
    $$
    We show by induction on $k$ that $u_k \in A$ and $v_k \in A$ for all $k \in \set{1, \dots, m}$.
    Suppose $u_1, \dots, u_{k-1} \in A$.
    Since $\varphi_k(x; u_1, \dots, u_{k-1}; \ua)$ is an $L$-formula that is algebraic in $x$, we have $v_k \in \acl_L(u_1, \dots, u_{k-1}, \ua) \subseteq A$.
    Therefore, $u_k = r_k(v_k)$ lies in $\spanA{v_k}{C} \subseteq A$.
    In particular, if $\uu \in Y_\ua$, then $\uu \in A$, which proves the moreover statement.
    Since $\iota$ is $L$-elementary, we have
    $$
    (\mm_2, \theta_2) \models \bigwedge\nolimits_{k=1}^m \varphi_k(\iota(v_k); \iota(u_1), \dots, \iota(u_{k-1}); \iota(\ua)).
    $$
    Furthermore, $\iota(u_k) = r_k(\iota(v_k))$ for every $k$, because $\iota$ is an $L_\theta$-isomorphism.
    Thus
    $$
    (\mm_2, \theta_2) \models \bigwedge\nolimits_{k=1}^m \iota(u_k) = r_k(\iota(v_k)) \wedge \varphi_k(\iota(v_k); \iota(u_1), \dots, \iota(u_{k-1}); \iota(\ua)),
    $$
    and hence $\iota(\uu) \in Y_{\iota(\ua)}$.
    The converse follows by symmetry.
\end{proof}
\end{lemma}

\begin{remark} \label{rem_elmap_closed_lol}
    If $\iota \colon \cl_\theta(\ua) \to B$ is both an $L$-elementary map and an $L_\theta$-isomorphism, then we have $B = \cl_\theta(\iota(\ua))$.
\end{remark}

\noindent We are now ready to use our characterization of definable sets to determine when two tuples have the same type. In the following, a ($\kappa$-)monster model of a theory $T$ refers to a special model $\MM \models T$ (with $\operatorname{cf}(|M|) \geq \kappa$). The definition of a special model can be found in Definition 6.1.1 in \cite{TZ12}.

\begin{corollary} \label{corollary_same_type}
    Assume that $T$ satisfies \Hfour{}. Let $\ua$ and $\ub$ be tuples in $(\mm_1, \theta_1) \models T\theta^C$ and $(\mm_2, \theta_2) \models T\theta^C$, respectively.
    Then
    $$
    \tp_{L_\theta}(\ua) \quad=\quad \tp_{L_\theta}(\ub)
    $$
    holds if and only if there is an $L_\theta$-isomorphism $\iota \colon (\cl_\theta(\ua), \theta_1) \to (\cl_\theta(\ub), \theta_2)$ that maps $\ua$ to $\ub$ and is an $L$-elementary map with respect to $\mm_1$ and $\mm_2$.
\begin{proof}
    If $\ua$ and $\ub$ have the same type, then we can extend $(\mm_1, \theta_1)$ and $(\mm_2, \theta_2)$ to two equally sized monster models $\MM_1$ and $\MM_2$ of $T\theta^C(\ua)$, where $\ua^{(\mm_2, \theta_2)} := \ub$.
    Since $\MM_1$ and $\MM_2$ are $L_\theta(\ua)$-isomorphic (see 6.1.4 in \cite{TZ12}), we obtain $\iota$ by restriction.

    Now assume that such an $\iota \colon (\cl_\theta(\ua), \theta_1) \to (\cl_\theta(\ub), \theta_2)$ is given.
    Then $|\ua| = |\ub|$.
    Let $\uw$ be a tuple of variables with $|\uw| = |\ua|$.
    By Theorem \ref{theorem_big_fml}, every $L_\theta$-formula in $\uw$ is, modulo $T\theta^C$, equivalent to a finite disjunction of formulas of the form $\exists \uy \in Y_\uw : \psi(\uy;\uw)$, where $Y$ is an algebraic pattern in $\uw$.
    Hence it is enough to show that $\ua$ and $\ub$ satisfy the same formulas of this form.
    Using Lemma \ref{lemma_iota_alg_pattern}, the equality $\iota(\ua) = \ub$, and the assumption that $\iota$ is $L$-elementary, we see that for any $\uu \in Y_\ua$ with $(\mm_1, \theta_1) \models \psi(\uu; \ua)$, we also have $\iota(\uu) \in Y_{\ub}$ and $(\mm_2, \theta_2) \models \psi(\iota(\uu); \ub)$.
    By symmetry, we conclude that
    $$
    (\mm_1, \theta_1) \models \exists \uy \in Y_\ua : \psi(\uy;\ua) \quad\Leftrightarrow\quad (\mm_2, \theta_2) \models \exists \uy \in Y_\ub : \psi(\uy;\ub)
    $$
    holds for every $L$-formula $\psi(\uy; \uw)$ and every algebraic pattern $Y$ in $\uw$.
    This completes the proof.
\end{proof}
\end{corollary}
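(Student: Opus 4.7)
The plan is to prove the two directions of the equivalence separately, with the hard work bundled into Theorem \ref{theorem_big_fml} on the one side and a standard monster-model argument on the other. For the direction ``$\tp_{L_\theta}(\ua) = \tp_{L_\theta}(\ub)$ implies the existence of $\iota$'', I would expand the language by constants for $\ua$ (and send them to $\ub$ on the right), so both structures become models of the same complete $L_\theta$-theory $T\theta^C \cup \tp_{L_\theta}(\ua)$. Embedding them into two equally-sized special (monster) models of this theory and invoking the uniqueness of special models up to isomorphism of a given cardinality (Theorem 6.1.4 in Tent--Ziegler) yields an $L_\theta(\ua)$-isomorphism of the monsters, and $\iota$ is its restriction to $\cl_\theta(\ua)$; since this restriction commutes with $\acl_L$ and with the action of every $r \in R_C$, it lands in $\cl_\theta(\ub)$, and it is $L$-elementary because it preserves the $L$-type over $\varnothing$.

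For the converse, the approach is to reduce to formulas of the special shape produced by Theorem \ref{theorem_big_fml}. By that theorem, any $L_\theta$-formula $\phi(\uw)$ is equivalent modulo $T\theta^C$ to a disjunction of formulas $\exists \uy \in Y_\uw : \psi(\uy; \uw)$, where $Y$ is an algebraic pattern and $\psi$ is a pure $L$-formula. Hence it suffices to transfer witnesses for such a single disjunct between the two structures. Suppose $(\mm_1, \theta_1) \models \exists \uy \in Y_\ua : \psi(\uy; \ua)$, and let $\uu$ be a witness. The crucial observation is Lemma \ref{lemma_iota_alg_pattern}: its second half says that any tuple outside $\cl_\theta(\ua)$ cannot lie in $Y_\ua$, so automatically $\uu \in \cl_\theta(\ua) = \operatorname{dom}(\iota)$. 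The first half of that lemma gives $\iota(\uu) \in Y_{\iota(\ua)} = Y_\ub$, and $L$-elementarity of $\iota$ converts $\psi(\uu; \ua)$ into $\psi(\iota(\uu); \ub)$, producing the desired witness on the right. The reverse implication is identical by symmetry.

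The main obstacle is essentially already resolved by Lemma \ref{lemma_iota_alg_pattern}, so the remaining question is conceptual rather than technical: one must be convinced that the domain $\cl_\theta(\ua)$ of $\iota$ is \emph{large enough} to accommodate witnesses of every $L_\theta$-existential formula over $\ua$. The definition of $\cl_\theta$, built to be closed under both $\acl_L$ and the $R_C$-span, is tailored precisely to the inductive structure of an algebraic pattern, whose witnesses are built in a chain where each new $y_i$ lies in $\spanA{x}{C}$ for an $x$ that is $L$-algebraic over the previously chosen $y_j$'s and $\ua$. Once this matches up, the argument is a direct two-way transfer through Theorem \ref{theorem_big_fml}, and no further induction or compactness is needed.
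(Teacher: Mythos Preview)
Your proposal is correct and follows essentially the same route as the paper: the monster-model argument via Tent--Ziegler 6.1.4 for one direction, and the reduction through Theorem \ref{theorem_big_fml} together with Lemma \ref{lemma_iota_alg_pattern} (including its ``furthermore'' clause to ensure witnesses lie in $\operatorname{dom}(\iota)$) for the other. If anything, you spell out a couple of points---why the restriction lands in $\cl_\theta(\ub)$ and why witnesses to $Y_\ua$ are automatically in $\cl_\theta(\ua)$---more explicitly than the paper does.
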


\subsection{Completions of $T\theta^C$}

Since two models are elementarily equivalent if and only if the empty tuples have the same type, Corollary \ref{corollary_same_type} gives the following:

\begin{theorem} \label{theorem_ee_iff}
    Assume that $T$ satisfies \Hfour{}. Two models $(\mm_1, \theta_1), (\mm_2, \theta_2) \models T\theta^C$ are elementarily equivalent if and only if there is an $L_\theta$-isomorphism
    $$
    \iota \colon (\cl_\theta^{(\mm_1, \theta_1)}(\varnothing), \theta_1) \to (\cl_\theta^{(\mm_2, \theta_2)}(\varnothing), \theta_2)
    $$
    that is also $L$-elementary with respect to $\mm_1$ and $\mm_2$.
\end{theorem}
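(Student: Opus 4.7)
The plan is to obtain Theorem \ref{theorem_ee_iff} as a direct specialization of Corollary \ref{corollary_same_type} to the case of empty tuples. The key observation is that, by definition, $(\mm_1,\theta_1) \equiv (\mm_2,\theta_2)$ is the same as $\tp_{L_\theta}^{(\mm_1,\theta_1)}(\varnothing) = \tp_{L_\theta}^{(\mm_2,\theta_2)}(\varnothing)$, so the biconditional in the theorem matches exactly the biconditional in Corollary \ref{corollary_same_type} applied with $\ua = \ub = \varnothing$.

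For the forward direction, I would assume $(\mm_1,\theta_1) \equiv (\mm_2,\theta_2)$, observe that the empty tuples vacuously realize the same $L_\theta$-type in the two models, and invoke Corollary \ref{corollary_same_type} to obtain an $L_\theta$-isomorphism
$$
\iota \colon (\cl_\theta^{(\mm_1,\theta_1)}(\varnothing), \theta_1) \to (\cl_\theta^{(\mm_2,\theta_2)}(\varnothing), \theta_2)
$$
which is also $L$-elementary; the condition $\iota(\ua) = \ub$ in the corollary is vacuous. For the reverse direction, given such an $\iota$, I would again appeal to Corollary \ref{corollary_same_type} (with $\ua = \ub = \varnothing$, so the mapping condition is again vacuous) to conclude that the empty tuples have the same $L_\theta$-type in both models, which is precisely elementary equivalence.

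There is essentially no obstacle here: the only points requiring verification are that $\cl_\theta(\varnothing)$ is a well-defined $L_\theta$-substructure (immediate, since $\cl_\theta$ has already been noted to be a closure operator producing $L_\theta$-substructures that are closed under $\theta$), and that Corollary \ref{corollary_same_type} genuinely applies to tuples of length zero. The latter causes no difficulty because the proof of that corollary proceeds via monster models of $T\theta^C(\ua)$, and for $\ua = \varnothing$ this is simply a pair of equally sized monsters of $T\theta^C$ extending $(\mm_1,\theta_1)$ and $(\mm_2,\theta_2)$. Finally, the additional clause from Theorem B in the introduction, that the hypothesis ``$L$-elementary'' can be dropped when $T$ has quantifier elimination or when $\acl_L(A) \models T$ for all $A$, follows immediately from Remark \ref{remark_when_L_theta_iso_is_enough}.
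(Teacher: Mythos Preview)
Your proposal is correct and is exactly the approach the paper takes: the paper simply notes that two models are elementarily equivalent if and only if the empty tuples have the same type, and then invokes Corollary \ref{corollary_same_type}. Your additional remarks about why the empty-tuple case causes no trouble and about the extra clause via Remark \ref{remark_when_L_theta_iso_is_enough} are accurate and go slightly beyond what the paper spells out.
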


\begin{corollary} \label{corollary_when_is_compl}
    Assume that $T$ satisfies \Hfour{}. If $T$ is complete and $\acl_L(\varnothing) \cap \VV = \set{0}$, then $T\theta^C$ is complete.
\begin{proof}
    If $T$ is complete, then for any models $(\mm_1, \theta_1), (\mm_2, \theta_2) \models T\theta^C$, there is an elementary map $\iota \colon \acl_L^{\mm_1}(\varnothing) \to \acl_L^{\mm_2}(\varnothing)$.
    Since $\spanA{0}{C} = \set{0}$, we have $\acl_L^{\mm_i}(\varnothing) = \cl_\theta^{(\mm_i, \theta_i)}(\varnothing)$ for $i = 1, 2$.
    Moreover, $\iota$ maps $0$ to $0$, so it is also an $L_\theta$-isomorphism.
    The conclusion follows from Theorem \ref{theorem_ee_iff}.
\end{proof}
\end{corollary}

\noindent The converse of Corollary \ref{corollary_when_is_compl} is not true.
For example, take $T$ to be the theory $\TKvs$ of $K$-vector spaces expanded by a non-zero constant $c$.
As always, let $C_0$ be the transcendental kernel configuration with $C_0(f) = 0$ for all $f \in \Kp{}$.
Then $T\theta^{C_0}$ turns out to be $T_{K(X)\operatorname{-vs}}(c)$, with $c$ interpreted as a non-zero constant; see Observation \ref{observation_str_min}.
This theory is complete.
On the other hand, if one chooses another kernel configuration $C$ for which $R_C$ is not a field, then $T\theta^C$ will not be complete:
Take some $f \in \Kp{}$ with $C(f) > 0$, then both $T\theta^C \cup \set{c \in \Ker(f)}$ and $T\theta^C \cup \set{c \not\in \Ker(f)}$ are consistent.

\begin{corollary} \label{corollary_model_completion}
    Assume that $T$ satisfies \Hfour{}. The theory $T\theta^C$ is the model completion of the theory $T_\theta \cup \set{\text{``$\theta$ is $C$-image-complete''}}$, i.e., given $(\mm_0, \theta_0) \models T_\theta \cup \set{\text{``$\theta$ is $C$-image-complete''}}$, the theory
    $
    T\theta^C \cup \Diag(\mm_0, \theta_0)
    $
    is complete.
\begin{proof}
    Let $(\mm_1, \theta_1), (\mm_2, \theta_2) \models T\theta^C \cup \Diag(\mm_0, \theta_0)$ be given.
    We need to show that $M_0^{(\mm_1, \theta_1)}$ and $M_0^{(\mm_2, \theta_2)}$ have the same $L_\theta$-type.
    Since $\mm_0$ is closed under $\acl_L$ and $\spanA{\;}{C}$, we have
    $$
    \cl_\theta(M_0^{(\mm_i, \theta_i)}) = M_0^{(\mm_i, \theta_i)}
    $$
    for $i = 1, 2$.
    Now the map $\iota \colon M_0^{(\mm_1, \theta_1)} \to M_0^{(\mm_2, \theta_2)}$ given by $c^{(\mm_1, \theta_1)} \mapsto c^{(\mm_2, \theta_2)}$ for every $c \in M_0$ is an $L_\theta$-isomorphism.
    As in the proof of Remark \ref{remark_when_L_theta_iso_is_enough}, it is also $L$-elementary by the model completeness of $T$.
    The conclusion follows from Corollary \ref{corollary_same_type}.
\end{proof}
\end{corollary}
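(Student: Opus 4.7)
The plan is to reduce completeness of $T\theta^C \cup \Diag(\mm_0, \theta_0)$ to the criterion for equality of types established in Corollary \ref{corollary_same_type}. Given two models $(\mm_1, \theta_1), (\mm_2, \theta_2) \models T\theta^C \cup \Diag(\mm_0, \theta_0)$, let $\ua_i$ be an enumeration of $M_0^{(\mm_i, \theta_i)}$. To show elementary equivalence of these two models over $\mm_0$ it suffices to show $\tp_{L_\theta}(\ua_1) = \tp_{L_\theta}(\ua_2)$, and for this Corollary \ref{corollary_same_type} tells us we need an $L_\theta$-isomorphism $\iota \colon (\cl_\theta(\ua_1), \theta_1) \to (\cl_\theta(\ua_2), \theta_2)$ that is also $L$-elementary.

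The first key step is to verify that $\cl_\theta(M_0^{(\mm_i, \theta_i)}) = M_0^{(\mm_i, \theta_i)}$. Since $\mm_0 \models T$, the set $M_0$ is closed under $\acl_L$. For closure under $\spanA{\;}{C}$, recall from Fact \ref{theorem_r_c_def} that every $r \in R_C$ is $\LKThe$-definable in the theory $\TKvsThe \cup \set{\text{``$\theta$ is $C$-image-complete"}}$, and $(\mm_0, \theta_0)$ satisfies this theory by hypothesis. Hence $V_0$ is closed under every $r \in R_C$, and the sum of such terms stays in $V_0 \subseteq M_0$; therefore $M_0$ is closed under $\spanA{\;}{C}$.

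The second step is to produce $\iota$. The canonical map $c^{(\mm_1, \theta_1)} \mapsto c^{(\mm_2, \theta_2)}$ defined by the common copy of $\mm_0$ is clearly an $L_\theta$-isomorphism, as both $(\mm_i, \theta_i)$ satisfy $\Diag(\mm_0, \theta_0)$. To see it is also $L$-elementary: the inclusion $\mm_0 \hookrightarrow \mm_i$ is an embedding of models of $T$ (for $i=1,2$), and since $T$ is model-complete, these inclusions are $L$-elementary; the composition of one with the inverse of the other is precisely $\iota$.

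These two ingredients feed directly into Corollary \ref{corollary_same_type} and yield $\tp_{L_\theta}(\ua_1) = \tp_{L_\theta}(\ua_2)$, proving the completion statement. The only potential obstacle is the claim that $M_0$ is $\cl_\theta$-closed, which hinges on the fact that the defining formulas for elements of $R_C$ only require $C$-image-completeness (not existential closure) to produce genuine endomorphisms; this is exactly what Fact \ref{theorem_r_c_def} guarantees, so the argument goes through cleanly.
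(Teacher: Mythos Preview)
Your proposal is correct and follows essentially the same route as the paper: both arguments verify that $M_0$ is $\cl_\theta$-closed (using that $\mm_0 \models T$ gives $\acl_L$-closure and $C$-image-completeness gives $\spanA{\;}{C}$-closure), take $\iota$ to be the canonical identification of the two copies of $M_0$, observe it is an $L_\theta$-isomorphism and $L$-elementary by model completeness of $T$, and then invoke Corollary~\ref{corollary_same_type}. Your write-up is slightly more explicit about why $M_0$ is closed under $\spanA{\;}{C}$ (pointing to Fact~\ref{theorem_r_c_def}), but the structure of the argument is identical.
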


\noindent We end this section by giving a criterion for when $T\theta^C$ has quantifier elimination.

\begin{theorem} \label{theorem_qe}
    Assume that $T$ satisfies \Hfour{}. Let $L'$ be a language interdefinable with $L$, modulo $T$, such that $T$ has quantifier elimination in $L'$ and $\spanA{A}{L'} = \acl_L(A)$ for every $A \subseteq \mm \models T$.
    Then $T\theta^C$ has quantifier elimination in the language $L' \cup \LRC$.
\begin{proof}
    Let $\ua = (a_1, \dots, a_n)$ and $\ub = (b_1, \dots, b_n)$ be tuples from $(\mm_1, \theta_1), (\mm_2, \theta_2) \models T\theta^C$, respectively, with the same quantifier-free $L' \cup \LRC$-type.
    Consider the map $\iota$ given by
    $$
    t(\ua) \mapsto t(\ub)
    $$
    for every $L' \cup \LRC$-term $t(x_1, \dots, x_n)$.
    We claim that $\iota$ is an $L_\theta$-isomorphism between $(\cl_\theta(\ua), \theta_1)$ and $(\cl_\theta(\ub), \theta_2)$ that is also $L$-elementary.
    To see this, first note that
    $$
    \cl_\theta(\ua) = \set{t(\ua) : \text{$t(x_1, \dots, x_n)$ is an $L' \cup \LRC$-term}},
    $$
    since this is the smallest set that contains $\ua$, is closed under $\spanA{\;}{C} = \spanA{\;}{L_{R_C}}$, and is closed under $\acl_L = \spanA{\;}{L'}$.
    The analogous equality holds for $\ub$.
    Since $\ua$ and $\ub$ have the same quantifier-free $L' \cup \LRC$-type, $\iota$ is well defined and bijective.
    Since $L$ is interdefinable with $L'$ and $T$ has quantifier elimination in $L'$, the map $\iota$ is $L$-elementary.
    Moreover, $\theta$ commutes with $\iota$, so $\iota$ is an $L_\theta$-isomorphism.
    Hence $\tp_{L_\theta}(\ua) = \tp_{L_\theta}(\ub)$ by Corollary \ref{corollary_same_type}.
\end{proof}
\end{theorem}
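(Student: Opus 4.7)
The plan is to invoke Corollary \ref{corollary_same_type} via the standard back-and-forth / isomorphism criterion for quantifier elimination: it suffices to show that any two tuples $\ua, \ub$ in models $(\mm_1, \theta_1), (\mm_2, \theta_2) \models T\theta^C$ with the same quantifier-free $(L' \cup \LRC)$-type have the same full $L_\theta$-type. By Corollary \ref{corollary_same_type}, it is enough to build an $L_\theta$-isomorphism $\iota \colon (\cl_\theta(\ua), \theta_1) \to (\cl_\theta(\ub), \theta_2)$ sending $\ua$ to $\ub$ that is also $L$-elementary.

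The first key step is to identify $\cl_\theta(\ua)$ with the set $T_\ua := \set{t(\ua) : t \text{ an } L' \cup \LRC\text{-term}}$. The inclusion $T_\ua \subseteq \cl_\theta(\ua)$ is immediate: $\cl_\theta(\ua)$ is closed under $\acl_L = \spanA{\;}{L'}$ (by hypothesis) and under the functions in $R_C$. For the reverse inclusion, I would check that $T_\ua$ is itself closed under these two operations. It is closed under $R_C$ by construction (and under the convention $r \cdot a = 0_\VV$ outside $\VV$), and closed under $\acl_L$ because $\acl_L(T_\ua) = \spanA{T_\ua}{L'} = T_\ua$, the last equality using that $L'$-terms composed with $L' \cup \LRC$-terms are still $L' \cup \LRC$-terms.

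Second, I would define $\iota \colon T_\ua \to T_\ub$ by $t(\ua) \mapsto t(\ub)$ for every $L' \cup \LRC$-term $t$. Well-definedness and injectivity follow from the equality of quantifier-free $L' \cup \LRC$-types of $\ua$ and $\ub$ (the atomic formula $t_1(\ux) = t_2(\ux)$ is quantifier-free), and surjectivity is symmetric. The same argument shows $\iota$ preserves all atomic $L' \cup \LRC$-relations, hence $\iota$ is an $L'$-isomorphism of substructures and, via interdefinability of $L$ and $L'$ modulo $T$, also an $L$-isomorphism. Since $\theta$ is an element of $R_C$ (it is $X[\theta]$) and all of $R_C$ is part of the language, $\iota$ commutes with $\theta_1, \theta_2$, so $\iota$ is an $L_\theta$-isomorphism.

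Finally, I would deduce that $\iota$ is $L$-elementary: because $T$ admits quantifier elimination in $L'$ and $\iota$ preserves quantifier-free $L'$-formulas, it preserves all $L'$-formulas, hence all $L$-formulas by interdefinability and model completeness of $T$. Corollary \ref{corollary_same_type} then gives $\tp_{L_\theta}(\ua) = \tp_{L_\theta}(\ub)$, completing the proof. The main point that requires care is the identification $\cl_\theta(\ua) = T_\ua$, and specifically the use of the hypothesis $\spanA{A}{L'} = \acl_L(A)$ to absorb the $\acl_L$-closure into $L'$-terms; without this, one would be stuck adding genuinely new algebraic elements not witnessed by $L' \cup \LRC$-terms, which would block the construction of $\iota$ on the whole of $\cl_\theta(\ua)$.
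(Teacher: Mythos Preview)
Your proposal is correct and follows essentially the same approach as the paper's proof: identify $\cl_\theta(\ua)$ with the set of $L' \cup \LRC$-term values, define $\iota$ termwise, verify it is an $L$-elementary $L_\theta$-isomorphism using the hypotheses, and conclude via Corollary \ref{corollary_same_type}. Your write-up is in fact slightly more explicit than the paper's in checking both inclusions for $\cl_\theta(\ua) = T_\ua$ and in noting that $\theta \in R_C$ forces $\iota$ to commute with $\theta$.
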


\noindent The criterion above is rather restrictive, as it, for example, implies $\acl_L = \dcl_L$.
Whenever this equality holds, one can choose $L' := L \cup \set{\text{``all $L$-definable functions''}}$ to satisfy the criterion above, but this is a very artificial language.
In practice, we will only apply Theorem \ref{theorem_qe} in the case $T = \TKvs$ and in the case where $T$ is a linear o-minimal expansion of the theory of ordered divisible abelian groups.

\begin{example} \label{example_kvs_to_module}
    The theory $\TKvs\theta^C$ is complete and has quantifier elimination in the language of $R_C$-modules, i.e., $\LRC = (0, +, (r \cdot)_{r\in R_C})$.
\begin{proof}
    Completeness follows from Theorem \ref{theorem_ee_iff}, since $\cl_\theta(\varnothing) = \set{0}$.
    Quantifier elimination follows directly from Theorem \ref{theorem_qe}, with $L'$ being $L_K$, i.e., the language of $K$-vector spaces.
    Note that $\lambda \cdot = \lambda[\theta] \cdot$, so we can treat $L$ as a subset of $\LRC$.
\end{proof}
\end{example}

\noindent We will study the theory $\TKvs\theta^C$ in a future paper, but for now, notice that the example above becomes particularly interesting if $R_C$ is a field:

\begin{observation} \label{observation_str_min}
    If $R_C$ is a field, then $\TKvs\theta^C$ is the theory of $R_C$-vector spaces and therefore strongly minimal.
\end{observation}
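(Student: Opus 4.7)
The plan is to leverage Example \ref{example_kvs_to_module}, which already gives that $\TKvs\theta^C$ is complete and admits quantifier elimination in the language $\LRC$ of (left) $R_C$-modules. Since $R_C$ is a field by hypothesis, the $R_C$-module axioms expressible in $\LRC$ coincide with the $R_C$-vector space axioms, so any model $(\VV, \theta) \models \TKvs\theta^C$, viewed as an $\LRC$-structure on $\VV$, is naturally an $R_C$-vector space. This already shows that $\TKvs\theta^C$ entails the theory of $R_C$-vector spaces.

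To upgrade this to an equality with the complete (strongly minimal) theory of infinite-dimensional $R_C$-vector spaces, I would verify that every model of $\TKvs\theta^C$ has infinite dimension over $R_C$. The point is that $\TKvs\theta^C$ admits models of arbitrarily large cardinality by upward L\"owenheim--Skolem, whereas any finite-dimensional $R_C$-vector space has cardinality bounded by $|R_C|$; hence there exists a model of $\TKvs\theta^C$ of infinite $R_C$-dimension. Completeness of $\TKvs\theta^C$ from Example \ref{example_kvs_to_module} then propagates this to every model, so $\TKvs\theta^C$ coincides with the complete theory of infinite-dimensional $R_C$-vector spaces.

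Strong minimality would then follow by the classical argument: by quantifier elimination in $\LRC$, every $L_\theta$-definable subset of $\VV$ in one free variable is a Boolean combination of atoms of the form $r(x) = a$ with $r \in R_C$ and $a \in \VV$; when $r = 0$ such an atom defines $\VV$ or $\varnothing$, and when $r \neq 0$ it defines the singleton $\{r^{-1}(a)\}$, using that $R_C$ is a field. Boolean combinations of singletons together with $\VV$ and $\varnothing$ are finite or cofinite, giving strong minimality.

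The main obstacle in the plan is the infinite-dimensionality step: without it, $\TKvs\theta^C$ could \emph{a priori} coincide with the complete theory of some finite-dimensional $R_C$-vector space, which would still be strongly minimal but would not be the usual ``theory of $R_C$-vector spaces''. The cardinality argument above, combined with the completeness already established in Example \ref{example_kvs_to_module}, is exactly what rules this out.
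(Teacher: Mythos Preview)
Your argument is correct and is exactly the intended justification: the paper states this observation without proof, relying implicitly on Example \ref{example_kvs_to_module} (completeness and quantifier elimination in $\LRC$) together with the trivial fact that $R_C$-modules are $R_C$-vector spaces when $R_C$ is a field. Your cardinality-plus-completeness argument for infinite dimension and the standard QE-based computation of one-variable definable sets are precisely the details one would supply.
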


\subsection{Algebraic closure}

We will now show that the closure operator $\cl_\theta$ introduced in Definition \ref{def_cl_theater} is actually the algebraic closure in models of $T\theta^C$.

\begin{lemma} \label{lemma_acl_indope}
    Let $\mm \models T$ be a sufficiently saturated model, and let $A, B \subseteq M$ be algebraically closed sets with $A \subseteq B$.
    There is a sequence of sets $(B_i : i \in \omega)$ such that $B_i \equiv_A B$ and $B_i \cap \acl_L(B_j : j \in \omega \setminus\set{i}) = A$ hold for every $i \in \omega$.
\begin{proof}
    By (1) of Proposition 1.5 in \cite{Adl09}, we can iteratively choose sets $B_i$ such that both $B_i \equiv_A B$ and
    $
    B_i \cap \acl_L(B_j : 0 \leq j < i) = A
    $
    hold.
    Write $B_i$ as a tuple $\ub_i = (b_{i, \alpha} : \alpha < \kappa)$.
    By Ramsey and compactness, we may assume that the sequence $(\ub_i : i \in \omega)$ is indiscernible over $A$.
    Suppose there is some $i_0 \in \omega$ with $B_{i_0} \cap \acl_L(B_j : j \in \omega \setminus\set{i_0}) \supsetneq A$.
    Then there are $j_1 < \cdots < j_m$, all different from $i_0$, an index $\alpha < \kappa$, and an $L(A)$-formula $\varphi(x; \uy_1, \dots, \uy_m)$ such that
    $
    \mm \models \varphi(b_{i_0, \alpha}; \ub_{j_1}, \dots, \ub_{j_m}),
    $
    and $\varphi(\mm; \ub_{j_1}, \dots, \ub_{j_m})$ is a finite set disjoint from $A$.
    Let $q$ be the size of this finite set.
    By the construction of the sequence, the witness cannot have $j_m < i_0$.
    Thus, after setting $j_0 := -1$, there is some $k < m$ such that $j_k < i_0 < j_{k+1}$.
    By indiscernibility, we may assume that $j_{k+1} - j_k > q+1$.
    Again by indiscernibility, we have
    $
    \mm \models \varphi(b_{i, \alpha}; \ub_{j_1}, \dots, \ub_{j_m})
    $
    for every $i$ with $j_k < i < j_{k+1}$.
    By the pigeonhole principle, there are two indices $i_1 < i_2$ in $\set{j_k +1, \dots, j_{k+1}-1}$ such that $b_{i_1, \alpha} = b_{i_2, \alpha} \not\in A$.
    However, this contradicts the equality $B_{i_2} \cap \acl_L(B_j : 0 \leq j < i_2) = A$.
\end{proof}
\end{lemma}

\begin{fact}[Standard Construction, Lemma 2.18 in \cite{Chi25}]
    \label{lemma_standart_construction}
    Given a $C$-endomorphism $\theta \colon \VV \to \VV$ and a vector space $\VV' \supset \VV$ with $\dim(\VV'/\VV) \geq \aleph_0$, there exists a $C$-endomorphism $\theta' \colon \VV' \to \VV'$ extending $\theta$.
\end{fact}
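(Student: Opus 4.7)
The plan is to construct $\theta'$ by specifying its values on a $K$-linear complement $W$ of $\VV$ in $\VV'$. Fix such a complement and a $K$-basis $B$ of $W$; by hypothesis $|B| = \dim_K(\VV'/\VV) \geq \aleph_0$. I would then handle the algebraic and transcendental cases of Definition \ref{def_T_C_theta} separately.

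In the transcendental case, I would index the basis as $B = \set{e_{\alpha, n} : \alpha \in I,\ n \in \omega}$ with $|I| = |B|$ (possible since $|B| \cdot \aleph_0 = |B|$), and set $\theta'(e_{\alpha, n}) := e_{\alpha, n+1}$, extending $K$-linearly on $W$ and by $\theta$ on $\VV$. Then $W$ becomes a free $K[X]$-module of rank $|I|$ under $\theta'$, so $f^k[\theta']$ is injective on $W$ for every $f \in \Kp{}$ and every $k \geq 0$. Since $W$ is $\theta'$-invariant and $\VV' = \VV \oplus W$, this yields $\Ker(f^k[\theta']) = \Ker(f^k[\theta])$ for all $k$; combined with the hypothesis $\Ker(f^{C(f)}[\theta]) = \Ker(f^{C(f)+1}[\theta])$ on $\theta$, this gives the required identity $\Ker(f^{C(f)}[\theta']) = \Ker(f^{C(f)+1}[\theta'])$ for all $f \in \Kp{C<\infty}$.

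In the algebraic case, set $p := \mipo(C)$ and let $d := \deg(p)$, which is finite. Index the basis as $B = \set{e_{\alpha, k} : \alpha \in I,\ 0 \leq k < d}$ with $|I| = |B|$ (possible since $|B|$ is infinite and $d$ is finite), and define $\theta'(e_{\alpha, k}) := e_{\alpha, k+1}$ for $k < d-1$ together with $\theta'(e_{\alpha, d-1}) := -\sum_{i=0}^{d-1} (p)_i \cdot e_{\alpha, i}$, using that $p$ is monic. A direct computation gives $p[\theta'](e_{\alpha, 0}) = 0$, and since $p[\theta']$ commutes with $\theta'$, the same holds for each $e_{\alpha, k}$; extending $K$-linearly and combining with $p[\theta] = 0$ on $\VV$ shows $p[\theta'] = 0$ on all of $\VV'$, so $\theta'$ is a $C$-endomorphism.

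The verification of the kernel/annihilator conditions in each case is essentially formal once the basis has been set up correctly. The only real (and mild) obstacle is the cardinal bookkeeping needed to realize the required partition of $B$: the algebraic case requires a partition into blocks of finite size $d$, and the transcendental case requires a partition into countably infinite blocks. This is precisely where the hypothesis $\dim(\VV'/\VV) \geq \aleph_0$ is used --- both partitions exist exactly when $|B|$ is infinite, and conversely one can see that the hypothesis cannot be dropped in the transcendental case, since a finite-dimensional extension provides no room to accommodate a free $K[X]$-summand.
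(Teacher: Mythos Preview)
Your proposal is correct. The paper itself does not give a proof of this statement---it is merely quoted as a \emph{Fact} (Lemma~2.17 in the companion paper~\cite{chini2025modeltheorygenericvector}), so there is no in-paper argument to compare against. That said, the construction you outline is exactly the kind of argument one expects for a result bearing the name ``Standard Construction'': choose a $K$-complement $W$ of $\VV$ in $\VV'$, then turn $W$ into a free $K[X]$-module in the transcendental case (via the shift on countable blocks) or into a direct sum of copies of $K[X]/(\mipo(C))$ in the algebraic case (via the companion-matrix action on blocks of size $\deg(\mipo(C))$). In both cases the kernel conditions required by Definition~\ref{def_T_C_theta} hold on $W$ for trivial reasons, and since $\VV$ and $W$ are both $\theta'$-invariant the conditions pass to $\VV' = \VV \oplus W$ from the hypothesis on $\theta$. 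Your cardinal bookkeeping is fine, and your closing remark that the hypothesis $\dim(\VV'/\VV) \geq \aleph_0$ is genuinely needed in the transcendental case (but not in the algebraic one, where divisibility of $\dim(\VV'/\VV)$ by $\deg(\mipo(C))$ would suffice) is accurate.
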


\begin{fact}[Lemma 3.19 in \cite{Chi25}] \label{lemma_substructure_R_C}
    Let $(\VV, \theta)$ be $C$-image-complete, and let $U \subseteq \VV$ be non-empty and closed under addition and $R_C$, i.e., for any $r \in R_C$ and $u \in U$, we have $r^{(\VV, \theta)}(u) \in U$.
    Then $(U, \theta_{\restriction U})$ is $C$-image-complete, and for any $r \in R_C$ and $u \in U$, we obtain
    $$
    r^{(\VV, \theta)}(u) = r^{(U, \theta_{\restriction U})}(u).
    $$
\end{fact}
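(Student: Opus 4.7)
The plan is to exploit the fact that every element of $R_C$ is built out of the three families of generators in Fact \ref{theorem_r_c_def}, each of whose defining formula is computable in a way that $U$ is explicitly closed under. First, note that $\theta = X[\theta] \in R_C$ and that every scalar multiplication $q\cdot = q[\theta]$ lies in $R_C$, so $U$ is a $K$-subspace of $\VV$ invariant under $\theta$, and hence $(U, \theta_{\restriction U})$ is a well-defined $\LKThe$-structure satisfying $\TKvsThe$.

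To verify that $(U, \theta_{\restriction U})$ is $C$-image-complete, fix $f \in \Kp{C<\infty}$ and take $u \in \Image(f^C[\theta_{\restriction U}])$. Then $u \in \Image(f^C[\theta])$, so $\pi_{\Image(f^C)}^{(\VV,\theta)}(u) = u$. Set $\eta := f^{C+1}$ and put $w := \eta[\theta]^{-1,(\VV,\theta)}(u)$; by the definition of $\eta[\theta]^{-1}$ (Fact \ref{fact_endo_gen}), $w$ is the unique element of $\Image(f^C)$ with $f^{C+1}[\theta](w) = u$. Since $\eta[\theta]^{-1} \in R_C$ and $U$ is closed under $R_C$, we get $w \in U$, and therefore $u = f^{C+1}[\theta_{\restriction U}](w) \in \Image(f^{C+1}[\theta_{\restriction U}])$. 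The reverse inclusion is trivial, so $(U, \theta_{\restriction U})$ is $C$-image-complete.

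Next, I would establish the agreement $r^{(\VV,\theta)}(u) = r^{(U, \theta_{\restriction U})}(u)$ for $u \in U$ by induction on the generation of $r$ inside $R_C$. For $r = \rho[\theta]$ the defining expression uses only $+, \theta, q\cdot$ and these behave identically in both structures. For $r = \pi_{\Image(F^C)}$ with finite $F \subseteq \Kp{0<C<\infty}$, the decomposition $\VV = \Image(F^C) \oplus \Ker(F^C)$ (Fact \ref{lemma_decomposition}) restricts to the direct sum decomposition $U = (U \cap \Image(F^C)) \oplus (U \cap \Ker(F^C))$, because for any $u \in U$ both $\pi_{\Image(F^C)}^{(\VV,\theta)}(u)$ and $\pi_{\Ker(F^C)}^{(\VV,\theta)}(u) = u - \pi_{\Image(F^C)}^{(\VV,\theta)}(u)$ lie in $U$. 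Using $(F^C)[\theta]^{-1} \in R_C$ as in the previous paragraph one shows $U \cap \Image(F^C) = \Image(F^C[\theta_{\restriction U}])$, while $U \cap \Ker(F^C) = \Ker(F^C[\theta_{\restriction U}])$ is immediate; hence the internal decomposition of $(U, \theta_{\restriction U})$ coincides with the restriction of the one on $\VV$, so the two projections agree on $U$. The case $r = \eta[\theta]^{-1}$ is analogous: its unique witness in $\Image(\Fac(\eta)^C)$ computed in $\VV$ already lies in $U$ by closure, and it still satisfies the defining property when read inside $U$ because the projection $\pi_{\Image(\Fac(\eta)^C)}$ has already been shown to agree. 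Closing under $+$ and $\circ$ propagates the agreement to all of $R_C$.

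The main obstacle is the equality $U \cap \Image(F^C) = \Image(F^C[\theta_{\restriction U}])$, i.e., showing that the external decomposition restricts to the \emph{internal} one rather than giving only an inclusion. This is exactly where the closure of $U$ under $R_C$ rather than merely under $\theta$ and $K$ is essential: it is the presence of the inverses $\eta[\theta]^{-1}$ inside $R_C$ that lets us solve equations of the form $F^C[\theta](w) = v$ without leaving $U$. Once this point is handled, the rest is routine bookkeeping along the generators of $R_C$.
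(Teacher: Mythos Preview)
The paper does not give a proof of this statement: it is stated as a Fact with a reference to Lemma~3.20 in \cite{chini2025modeltheorygenericvector}, so there is nothing in the present paper to compare your argument against directly.

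That said, your proposal is correct and is the natural argument one would expect in the cited source. The key step you isolate---that closure of $U$ under $R_C$ forces $U\cap\Image(F^C[\theta])=\Image(F^C[\theta_{\restriction U}])$ via the inverses $(F^C)[\theta]^{-1}$---is exactly what makes the internal and external decompositions coincide, and your induction over the generators of $R_C$ (polynomials, projections, inverses, then closure under $+$ and $\circ$) is the right structure. One minor point worth making explicit: before invoking $r^{(U,\theta_{\restriction U})}$ for the inverse generators in the inductive step, you need the projections to already agree, since the defining formula of $\eta[\theta]^{-1}$ refers to $\pi_{\Image(\Fac(\eta)^C)}$; your ordering (polynomials, then projections, then inverses) handles this correctly, but it is worth saying that the order matters.
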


\begin{theorem} \label{theorem_acl}
    Assume that $T$ satisfies \Hfour{}. For any $(\mm, \theta) \models T\theta^C$ and any set $A \subseteq M$, we have $\acl_{L_\theta}(A) = \cl_\theta(A)$.
\begin{proof}
    The inclusion ``$\supseteq$'' is clear.
    Now assume $A = \cl_\theta(A)$ and $b \not\in A$.
    Using Lemma \ref{lemma_acl_indope}, we find a sufficiently saturated model $\mm' \models T(A)$ together with an $L(A)$-elementary map $\iota_i \colon \cl_\theta(Ab) \to B_i$ for each $i \in \omega$ such that
    $
    B_i \cap \acl_L(B_j : j \in \omega \setminus \set{i}) = A
    $
    holds for all $i \in \omega$.
    We define a $C$-endomorphism $\theta'$ on $\spanA{B_i \cap \VV' : i \in \omega}{K}$ by setting $\theta'(v) := \iota_i \circ \theta \circ \iota_i^{-1}(v)$ for any $v \in B_i \cap \VV'$.
    \begin{subclaim}
        The map $\theta'$ is well-defined and indeed a $C$-endomorphism.
    \begin{innerproof}
        Well-definedness follows from $B_i \cap \acl_L(B_j : j \in \omega \setminus \set{i}) = A$ and the fact that the $\iota_i \circ \theta \circ \iota_i^{-1}$'s coincide on $A$. It remains to show that $\theta'$ is a $C$-endomorphism. First suppose that $C$ is algebraic.
    Given $b = \sum_{i \in \ii}b_{i}$ with $\ii \subset \omega$ finite and $b_i \in B_i \cap \VV'$ for each $i \in \ii$, we obtain 
    $$
    \mipo(C)[\theta'](b) = \sum\nolimits_{i \in \ii} \iota_i \circ \mipo(C)[\theta] \circ \iota_i^{-1}(b_i) = 0.
    $$
    As any $b \in \spanA{ B_i \cap \VV' : i \in \omega}{K}$ can be written in this manner, $\theta'$ is a $C$-endomorphism.

    Now assume that $C$ is transcendental.
    Given $f \in \Kp{}$ with $0 < C(f) < \infty$ and $b = \sum_{i \in \ii}b_{i}$ as above with $f^{C+1}[\theta'](b) = 0$, we need to show $f^C[\theta'](b) = 0$.
    We obtain
    $$
    f^{C+1}[\theta'](b) = \sum\nolimits_{i \in \ii} \iota_i \circ f^{C+1}[\theta] \circ \iota_i^{-1}(b_i) = 0.
    $$
    As $B_i \cap \acl_L(B_j : j \in \omega \setminus \set{i}) = A$, we see that each $\iota_i \circ f^{C+1}[\theta] \circ \iota_i^{-1}(b_i)$ must lie in $A$.
    Notice that $(B_i \cap \VV', \iota_i \circ \theta \circ \iota_i^{-1})$ is $\LKThe(A \cap \VV)$-isomorphic to the structure $(\cl_\theta(Ab) \cap \VV', \theta)$, so it is $C$-image-complete by Fact \ref{lemma_substructure_R_C}.
    Applying $f^{C+1}[\theta]^{-1} \circ f^{C+1}[\theta] = \pi_{\Image(f^C)}$ and $\Id = \pi_{\Image(f^C)} + \pi_{\Ker(f^C)}$ (see Fact \ref{fact_endo_gen} and the paragraph below), we obtain 
    $$
    b_i \in \Ker(f^C[\iota_i \circ \theta \circ \iota_i^{-1}]) + \underbrace{f^{C+1}[\iota_i \circ \theta \circ \iota_i^{-1}]^{-1}(\iota_i \circ f^{C+1}[\theta] \circ \iota_i^{-1}(b_i))}_{:= a_i \in A}
    $$ for every $i \in \ii$.
 Hence, for $l \in \set{0, 1}$, we obtain $f^{C+l}[\theta'](b) = \sum\nolimits_{i \in \ii} f^{C+l}[\theta](a_i) = f^{C+l}[\theta]\big(\sum\nolimits_{i \in \ii} a_i\big)$.
    This implies $f^{C+1}[\theta]\big(\sum\nolimits_{i \in \ii} a_i\big) = 0$ and therefore $f^{C}[\theta'](b) = f^{C}[\theta]\big(\sum\nolimits_{i \in \ii} a_i\big) = 0$, as $\theta$ is a $C$-endomorphism.
    We conclude that $\theta'$ is also a $C$-endomorphism.
    \end{innerproof}
    \end{subclaim}

    We can extend $\theta'$ to a $C$-endomorphism on all of $\VV'$ using our \standartConstruction{} (Lemma \ref{lemma_standart_construction}).
    Hence $(\mm', \theta') \models T^C_\theta(A)$.
    Since $(\mm', \theta')$ embeds into an existentially closed model, we may assume, without loss, that $(\mm', \theta') \models T\theta^C(A)$.
    Notice that, for every $i$, the map $\iota_i \colon \cl_\theta(Ab) \to B_i$ is $L$-elementary, is an $L_\theta$-isomorphism, and maps $Ab$ to $A\iota_i(b)$ while fixing $A$ elementwise.
    Hence $\tp_{L_\theta}(\iota_i(b)/A) = \tp_{L_\theta}(b/A)$ by Corollary \ref{corollary_same_type}.
    Since $\iota_i(b) \neq \iota_j(b)$ for $i \neq j$, we conclude that $\tp_{L_\theta}(b/A)$ has infinitely many realizations, so $b \not\in \acl_{L_\theta}(A)$.
\end{proof}
\end{theorem}

\noindent Let $\cl_\theta^\dcl(A)$ be the smallest set that is both closed under $\dcl_L$ and $\spanA{\;}{C}$.
It seems likely to the author that the equation $\dcl_{L_\theta}(A) = \cl_\theta^\dcl(A)$ does not hold in general.
It does, however, hold if $\acl_L = \dcl_L$, as in the o-minimal case, or, more generally, if some formula defines a linear order on every model of $T$.

\begin{example} \label{example_acl}
    For any $(\VV, \theta) \models \TKvs\theta^C$ and any subset $A \subseteq \VV$, the following holds:
    $$
    \acl_{L_\theta}(A) = \dcl_{L_\theta}(A) = \spanA{A}{C} = \spanA{A}{L_{R_C}} = \cl_\theta(A).
    $$
\begin{proof}
    Since $\acl_{L} = \spanA{\;}{K}$, we see that $\spanA{\;}{\LRC}$ is the smallest set closed under both $\acl_{L}$ and $\spanA{\;}{C}$.
    Thus, by the definition of $\cl_\theta$, we have $\spanA{\;}{\LRC} = \cl_\theta$.
    We can now use Theorem \ref{theorem_acl}.
\end{proof}
\end{example}

\noindent By Observation \ref{observation_str_min}, we know that $\TKvs\theta^C$ is strongly minimal if $R_C$ is a field.
Recall that, in strongly minimal theories, $\acl$ has the \textbf{exchange property}, i.e., whenever we have $a \in \acl(Ab)\setminus \acl(A)$, then also $b \in \acl(Aa)$.
One can ask whether $\acl_{L_\theta}$ generally has the exchange property in $T\theta^C$ and is therefore a pregeometry.
Recall that whenever the algebraic closure has the exchange property, there is a well-defined notion of dimension (see e.g., Definition 1.3 in \cite{Pil88}):

\begin{fact} \label{fact_acl_dimension}
    Let $T$ be an arbitrary theory, which does not necessarily satisfy our general setting, and suppose that algebraic closure has the exchange property in $T$.
    Given $\mm \models T$, a parameter set $A \subseteq M$, and a tuple $\ua = (a_1, \dots, a_n) \in M^n$, define the \textbf{$\bm{\acl}$-dimension} of $\ua$ over $A$ as
    $$
    \dim(\ua/A) := \max \set{ |I| : I \subseteq \set{1, \dots, n} \text{ and } (a_i)_{i \in I} \text{ is $\acl$-independent over } A }.
    $$
    If $X \subseteq M^n$ is $L(A)$-definable, define
    $$
    \dim(X) := \max \set{ \dim(\ua/A) : \ua \in X^\MM },
    $$
    where $\MM \succ \mm$ is a sufficiently large monster model, and $X^\MM$ denotes the set defined by the same formula in that monster model.
    We set $\dim(\varnothing) := -\infty$.
    When the language needs to be specified, we write $\dim_L$ for the dimension computed using $\acl_L$.
    The following standard properties hold:
    \begin{enumerate}[(i)]
        \item The definition above is well-defined, i.e., $\dim(X)$ neither depends on the choice of $A$ nor $\MM$.
        \item If $X \subseteq Y$ are $L(M)$-definable sets, then $\dim(X) \leq \dim(Y)$.
        \item $L(M)$-definable bijections preserve dimension.
        \item Every infinite definable set has dimension at least $1$.
        \item Suppose that $X \subseteq M^m$ is a $L(M)$-definable set and $f \colon X \to M^n$ is a $L(M)$-definable function such that $\dim(f^{-1}(\set{f(\ua)})) = q$ is the same for every $\ua \in X$. Then $\dim(X) = \dim(f(X)) + q$.
    \end{enumerate}
\end{fact}

\noindent We assume that the reader is generally familiar with this notion of dimension. Note that it is often considered in the literature with the additional assumption that $T$ eliminates $\exists^\infty$.
This assumption is mostly used to ensure that this dimension is definable in some way. It is, however, not necessary to show the points in Fact \ref{fact_acl_dimension} above; in fact, it is straightforward to prove these points with the definition of $\dim$ and standard facts about pregeometries (see, e.g., Section C1 in \cite{TZ12}).

In Theorem \ref{theorem_acl}, we have seen that $\acl_{L_\theta} = \cl_\theta$, where $\cl_\theta$ is the smallest set closed under both $\acl_L$ and $\spanA{\;}{C}$.
This raises a simpler question: When does $\spanA{\;}{C}$, the closure under sums and the endomorphisms from $R_C$, have the exchange property? Another potential question one might ask is: Does $\acl_{L_\theta} = \cl_\theta$ have the exchange property if both $\acl_L$ and $\spanA{\;}{C}$ have the exchange property?

\begin{lemma} \label{lemma_both_im_ker_inf}
    Let $C$ be a kernel configuration such that $R_C$ is not a field.
    Then there is some $f \in \Kp{}$ such that both $\Image(f)$ and $\Ker(f)$ are infinite in every existentially closed model of $T_\theta^C$.
    Moreover, given any infinite $L(M)$-definable subset $X \subseteq \VV$, the sets $\Ker(f) \cap X$ and $\Image(f) \cap X$ are both infinite.
\begin{proof}
    Work in an existentially closed model $(\mm, \theta)$ of $T_\theta^C$.
    By Fact \ref{fact_when_field}, there is some $f \in \Kp{}$ with $C(f) \neq 0$.
    Using Theorem \ref{theorem_big_characterization}, it is easy to check that 
    $$
    (\mm, \theta) \models \exists x \in \VV : x \in X \setminus U \wedge f[\theta](x) = 0
    $$
    holds for any finite $U \subseteq \VV$.
    Hence $\Ker(f) \cap X$ is infinite.
    Fact \ref{fact_when_field} also tells us that either:
    \begin{enumerate}[(i)]
        \item $C$ is a transcendental kernel configurations.
        In this case, one can use Theorem \ref{theorem_big_characterization} to show that $(\mm, \theta) \models \exists x \in \VV : f[\theta](x) \in X \setminus U$ holds for any finite $U \subseteq \VV$.
        \item $C$ is algebraic and $f \mid \mipo(C) \neq f$.
        In this case, either $\Image(f) \cap X \supseteq \Ker(g) \cap X$ is infinite if there is another $g \in \Kp{} \setminus \set{f}$ with $g \mid \mipo(C)$, or $C(f) \geq 2$ otherwise.
        If $C(f) \geq 2$, then $(\mm, \theta) \models \exists x \in \VV : x \in \VV \wedge f[\theta](x) = u$ holds for any $u$ in the infinite set $\Ker(f) \cap X$, again by Theorem \ref{theorem_big_characterization}.
    \end{enumerate}
    In both cases, we conclude that $\Image(f)$ is infinite.
\end{proof}
\end{lemma}

\begin{theorem} \label{theorem_nes_cond_fixed}
    The closure operation $\spanA{\;}{C}$ from Definition \ref{def_span_c} has the exchange property in the theory $\TKvsThe \cup \set{\text{``$\theta$ is $C$-image-complete''}}$ if and only if $R_C$ is a field. Moreover, if $T$ satisfies \Hfour{}, the following holds:
    \begin{enumerate}[(i)]
        \item If $\acl_L$ does not have the exchange property in $T$, then $\acl_{L_\theta}$ does not have the exchange property in $T\theta^C$.
        \item Assume that $C$ is non-trivial and that $\acl_L$ has the exchange property in $T$. If $\VV$ is not one-dimensional (with respect to the dimension induced by $\acl_L$), then $\acl_{L_\theta}$ does not have the exchange property in $T\theta^C$.
        \item If $\spanA{\;}{C}$ does not have the exchange property in $\TKvsThe \cup \set{\text{``$\theta$ is $C$-image-complete''}}$, then $\acl_{L_\theta}$ does not have the exchange property in $T\theta^C$.
    \end{enumerate}
\begin{proof}
    We first show that $\spanA{\;}{C}$ has the exchange property if $R_C$ is a field.
    Suppose that there is an element
    $
    v \in \spanA{u_0, \dots, u_{n}}{C} \setminus \spanA{u_1, \dots, u_{n}}{C}.
    $
    By Definition \ref{def_span_c}, we obtain $r_0, \dots, r_n \in R_C$ with $r_0 \neq 0$ such that
    $$
    v = r_0(u_0) + \cdots + r_n(u_n).
    $$
    Since $R_C$ is a field, we have
    $
    u_0 = r_0^{-1}\big(v - r_1(u_1) - \cdots - r_n(u_n)\big),
    $
    and hence $u_0 \in \spanA{v, u_1, \dots, u_n}{C}$. The other direction follows with (iii) applied with $T = \TKvs$.

    We now prove (i). If $\acl_L$ does not have the exchange property in $T$, we can find a model of $T$ that witnesses this. Without loss, we can assume that this model is contained in a sufficiently large monster model $(\MM, \theta) \models T\theta^C$. Now there are a finite set $A \subsetneq M$, and $a, b \in M$ such that
    $$
    a \in \acl_L(Ab) \setminus \acl_L(A)
    \quad\text{and}\quad
    b \not\in \acl_L(Aa).
    $$
    As $a \not\in \acl_L(A)$, we can find some $a' \in M$ with $\tp_L(a'/A) = \tp_L(a/A)$ and $a' \not\in \acl_{L_\theta}(A)$. Similarly, as $b \not\in \acl_L(Aa)$, we can find some $b' \in M$ with $\tp_L(a'b'/A) = \tp_L(ab/A)$ and $b' \not\in \acl_{L_\theta}(Aa')$. This construction clearly yields $a' \in \acl_{L_\theta}(Ab') \setminus \acl_{L_\theta}(A)$ and $b' \not\in \acl_{L_\theta}(Aa')$, so $\acl_{L_\theta}$ does not have the exchange property.
    
    Next, we show (ii).
    Work in a sufficiently large monster model $(\MM, \theta) \models T\theta^C$ and assume that $\VV$ has $\acl_L$-dimension $m \geq 2$ and arity $n \geq m$. This means (after possibly permuting the coordinates of $\VV$) that there is $(a_1, \dots, a_n) \in \VV$ such that $a_1, \dots, a_m$ are $\acl_L$-independent over $\varnothing$ and $a_{m+1}, \dots, a_n \in \acl_L(a_1, \dots, a_m)$.
    Equivalently, $a_i \not\in \acl_L(a_1, \dots, a_{i-1}, a_{i+1}, \dots, a_m)$ holds for every $i \in \set{1, \dots, m}$, and there is an $L$-formula $\varphi(y_{m+1}, \dots, y_{n}; y_1, \dots, y_m)$ algebraic in $(y_{m+1}, \dots, y_n)$ such that 
    $$
    \MM \models \varphi(a_{m+1}, \dots, a_{n}; a_1, \dots, a_m).
    $$
    We can furthermore assume that there is $q \in \NN$ such that $|\varphi(\MM; d_1, \dots, d_m)| \leq q$ for any $d_1, \dots, d_m \in M$.
    Define $A := \set{a_1, \dots, a_{m-1}}$ and set 
    \begin{align*}
        \psi(x; y) := x \in \VV \wedge \exists (y_{m}, \dots, y_n) &: \big(x = (y, a_2, \dots, a_{m-1}, y_m, \dots, y_n) \\ & \hspace{70pt}\wedge \varphi(y_{m+1}, \dots, y_n; y, a_2, \dots, a_{m-1}, y_m)\big).
    \end{align*}
    Since $a_m \not\in \acl_L(a_1, \dots, a_{m-1})$, and $\MM \models \varphi(a_{m+1}, \dots, a_{n}; a_1, \dots, a_m)$, we see that the $L(A)$-definable set
    $
    B_0 := \psi(\MM; a_1)
    $
    is infinite and that the preimage of any element under the $L(A)$-definable map $\pi_{m \restriction B_0}$ has cardinality at most $q$ (as always $\pi_m$ is the projection to the $m$-th coordinate). This means that, given any $b \in M$, we have $(\pi_{m \restriction B_0})^{-1}(\set{b}) \subseteq \acl_L(Ab)$. This also means that, given any finite set $U \subseteq M$, the formula $x \in B_0 \wedge \pi_m(x) \not\in U$ also defines an infinite subset of $\VV$, since at most $q \cdot |U|$ elements are excluded from the infinite set $B_0$.
    
    Since we have $a_1 \not\in \acl_L(a_2, \dots, a_m)$, we can find some element $a \in M \setminus \acl_{L_\theta}(A)$ that satisfies $\tp_L(a/a_2, \dots, a_m) = \tp_L(a_1/a_2, \dots, a_m)$. With this, one can easily verify that, for $B_1 := \psi(\MM; a)$, the $L(M)$-formula $x \in B_1$ defines an infinite subset of $\VV$. 
    \begin{subclaim} \label{claim_claim_claim_claim_and_claim}
    There is some $v \in \VV$ such that
    $
    v \in B_0 \wedge \pi_m(v) \not\in \acl_{L_\theta}(Aa) \wedge \theta(v) \in B_1
    $.
    \begin{innerproof}
        By compactness, we only need to show that the formula $x \in B_0 \wedge \pi_m(x) \not\in U \wedge \theta(x) \in B_1$ has a realization for every finite $U \subseteq \acl_{L_\theta}(Aa)$. Fix such a $U$ throughout the proof. Note that any $L(M)$-formula in a single variable that defines an infinite subset of $\VV$ implies no finite disjunction of non-trivial linear dependencies over $\VV$. Because $x^0$ and $x^1$ are different variables, it is easy to see that
        $$
        x^0 \in B_0 \wedge \pi_m(x^0) \not\in U \wedge x^1 \in B_1 
        $$
        implies no finite disjunction of non-trivial linear dependencies in $\xvec$ over $\VV$.

        If $C$ is transcendental, we can simply apply Theorem \ref{theorem_big_characterization}, since $x^0 \in B_0 \wedge \pi_m(x^0) \not\in U \wedge x^1 \in B_1$ implies no finite disjunction of non-trivial linear dependencies in $\xvec$ over $\VV$ and $S(x) := \top$ is a $C$-sequence-system over $(\VV, \theta)$ (that bounds every formula).

        If $C$ is algebraic, set $F := \Kp{0<C<\infty}$, set $\ux := (x_f : f \in F)$, and define $\rho_f \in K[X]$ as the remainder of $X$ divided by $f^C$ for all $f \in F$. Since $C$ is non-trivial, $\mipo(C) = \prod_{f \in F} f^C$ has degree greater than $1$. This implies that either $|F| \geq 2$ or that there is $f \in F$ with $\deg(f^C) \geq 2$. Since $x^0 \in B_0 \wedge \pi_m(x^0) \not\in U \wedge x^1 \in B_1$ implies no finite disjunction of non-trivial linear dependencies in $\xvec$ over $\VV$, we see that in any case
        $$
        \psi(\uxvec) := \sum\nolimits_{f \in F} x^0_f \in B_0 \wedge \pi_m\Big(\sum\nolimits_{f \in F} x^0_f\Big) \not\in U \wedge \Big(\sum\nolimits_{f \in F} \sum\nolimits_{i=0}^{\deg(\rho_f)} (\rho_f)_i \cdot x^i_f \Big) \in B_1
        $$
        implies no finite disjunction of non-trivial linear dependencies in $\uxvec$ over $\VV$. Because the conjunction $S(\ux) := \bigwedge_{f\in F} f^C[\theta](x_f) = 0$ is clearly a $C$-sequence-system over $(\VV, \theta)$ that bounds $\psi(\uxvec)$, we can use Theorem \ref{theorem_big_characterization} to show that $\psi_\theta(\ux) \wedge S(\ux)$ has a realization $(v_f : f \in F) \in \VV$. One can now check that $\sum_{f \in F} v_f$ realizes $x \in B_0 \wedge \pi_m(x) \not\in U \wedge \theta(x) \in B_1$.
    \end{innerproof}
    \end{subclaim}
    \noindent By the definitions of $B_0$, $B_1$, and $v$, we have $b := \pi_m(v) \in M \setminus \acl_{L_\theta}(Aa)$ and $\pi_1(\theta(v)) = a$. As established above we have $(\pi_{m \restriction B_0})^{-1}(\set{b}) \subseteq \acl_L(Ab) \subseteq \acl_{L_\theta}(Ab)$. With this, we obtain 
    $$
    a \in \pi_1(\theta((\pi_{m\restriction B_0})^{-1}(\set{b}))) \subseteq \acl_{L_\theta}(Ab).
    $$
    As we also have $a \not\in \acl_{L_\theta}(A)$ and $b \not\in \acl_{L_\theta}(Aa)$ by construction, we see that $\acl_{L_\theta}$ does not have the exchange property in $T\theta^C$.

    Finally, we show (iii). By (i) and (ii), we can assume that $\acl_L$ has the exchange property in $T$ and that $\VV$ is a $1$-dimensional set. Assume further that $R_C$ is not a field and, toward a contradiction, that $\acl_{L_\theta}$ has the exchange property in $T\theta^C$. By Lemma \ref{lemma_both_im_ker_inf}, there is some $f \in \Kp{0<C}$ for which both $\Ker(f)$ and $\Image(f)$ are infinite. 
    Notice that for any $u \in \Image(f)$, the preimage under $f[\theta]$ is in $L_\theta(M)$-definable bijection with $\Ker(f)$. Using (iii), (iv) and (v) of Fact \ref{fact_acl_dimension}, we obtain
    $$
    1 = \dim(\VV) \geq \dim_{L_\theta}(\VV) = \dim_{L_\theta}(\Image(f)) + \dim_{L_\theta}(\Ker(f)) \geq 1 + 1,
    $$
    which yields a contradiction.
    Note that the first inequality follows from the definition of the $\acl$-dimension, as being $\acl_{L_\theta}$-independent over a set also implies being $\acl_{L}$-independent over the same set.
\end{proof}
\end{theorem}

\noindent The next example, with $C$ chosen such that $R_C$ is a field, shows that there are cases where both $\acl_L$ and $\spanA{\;}{C}$ have the exchange property and $\VV$ is a $1$-dimensional set, but $\acl_{L_\theta}$ does not have it in $T\theta^C$.

\begin{example} \label{ex_no_ex}
    Consider the theory $\RCF$ of real closed fields with $K = \QQ$ and
    $$
    (\VV, 0, +, (q\cdot)_{q \in \QQ}) = (R_{>0}, 1, \cdot, (x \mapsto x^q)_{q \in \QQ}).
    $$
    In the theory $\RCF\theta^C$, the algebraic closure $\acl_{L_\theta} = \cl_\theta$ does not have the exchange property (unless $C$ is trivial).
\begin{proof}
    For any $\rr \models \RCF$ and $u \in R_{>0}$, one easily verifies that the formula $x^0 + x^1 = u$, where $+$ is the actual addition on $\rr$, does not imply any finite disjunction of non-trivial linear dependencies in $\xvec$ over $\VV$.
    In this case, these are multiplicative dependencies.
    With arguments similar to the proof of Claim \ref{claim_claim_claim_claim_and_claim}, we see, for a fixed model $(\rr, \theta) \models \RCF\theta^C$ and $u \in R_{>0} \setminus \acl_{L_\theta}(\varnothing)$, that the set $\set{v \in R_{>0} : v + \theta(v) = u}$ is infinite.
    Hence there are $(\rr', \theta') \succ (\rr, \theta)$ and $v' \in R'_{>0}$ with $v' \not\in R$ and $v' + \theta(v') = u$.
    We conclude that $u \in \acl_{L_\theta}(v') \setminus \acl_{L_\theta}(\varnothing)$ and $v' \not\in \acl_{L_\theta}(u)$.
\end{proof}
\end{example}

\section{o-minimal Open Core} \label{sec_o_min_open_core}

As noted in Example \ref{examples_hfour}, the results in \cite{Blo23} yield many o-minimal theories that satisfy \Hfour{}.
In particular, any o-minimal expansion of $\RCF{}$ with $(\VV, +, 0, (q\cdot)_{q\in \QQ})$ being the $\QQ$-vector space induced by the multiplication on the positive elements satisfies \Hfour{} if and only if no partial exponential function is definable.
Suppose that $T'$ is an expansion of an o-minimal theory. If $T'$ is not o-minimal, then one often asks the question whether $T'$ has an o-minimal open core in the following sense:

\begin{definition} \label{def_o_min_open_cire}
    We define the following:
    \begin{enumerate}[(i)]
        \item Given an expansion $\mm = (M, <, \dots )$ of a dense linear order without endpoints, we define the \textbf{open core} of $\mm$, denoted $\mm^\circ$, as its domain $M$ together with a predicate for the closure (with respect to the topology induced by the order) of each $M$-definable set (of arbitrary arity).
        \item Given two theories $T_0, T_1$ extending the theory of dense linear orders without endpoints and with $T_0$ being o-minimal, we say that $T_0$ is an \textbf{o-minimal open core} of $T_1$, if for every $\mm \models T_1$ there is $\mm' \models T_0$ with the same domain and order $(M, <)$ such that every set definable in $\mm^\circ$ with parameters is also definable in $\mm'$ with parameters and vice versa. 
    \end{enumerate}
\end{definition}

\noindent Since $T\theta^C$ is an expansion of $T$, a natural question is: If $T$ is o-minimal, does $T\theta^C$ have an o-minimal open core?
In this section, we answer this question.
First, note that $T\theta^C$ itself is not o-minimal:

\begin{observation} \label{rem_ofc_not_o_min}
    The theory $T\theta^C$ is not o-minimal, unless $C$ is trivial.
\begin{proof}
    Work in a model $(\mm, \theta) \models T\theta^C$.
    Let $U_0$ and $U_1$ be arbitrary non-empty definable open subsets of $\VV$.
    One shows that the formula $x_0 \in U_0 \wedge x_1 \in U_1$ implies no finite disjunction of non-trivial linear dependencies in $x_0x_1$ over $\VV$ and is bounded by some $C$-sequence-system $S$.
    Using arguments as in Claim \ref{claim_claim_claim_claim_and_claim}, one can prove that the formula $x \in U_0 \wedge \theta(x) \in U_1$ is consistent.
    By varying $U_0$ and $U_1$, one shows that the graph of $\theta$ is dense and co-dense in $\VV^2$.
    This contradicts o-minimality.
\end{proof}
\end{observation}

\noindent Let $T$ be model-complete and satisfy \Hfour{} as always.
In this section, we additionally assume that $T$ is o-minimal and an expansion of the theory of dense linear orders without endpoints.
Note that the vector space $(\VV, 0, +, (\lambda \cdot)_{\lambda \in K})$ can still be defined on an $n$-ary set.
The topology on $M^m$ for any $m \geq 1$ is generated by products of intervals, i.e., sets of the form
$$
B(\uepsilon) := \Set{(a_1, \dots, a_m) \in M^m : \mm \models \bigwedge\nolimits_{k=1}^m \epsilon_{0,k} < a_k < \epsilon_{1,k}}
$$
with $\uepsilon = (\epsilon_{i, k} : i \in \set{0,1}, 1 \leq k \leq m) \in M^{2m}$.
Note that we do not assume that the vector space operations on $\VV$ are continuous.
The main theorem of this section is the following, where $\operatorname{Cl}$ denotes the topological closure:

\begin{theorem} \label{theorem_o_min_open_core}
    Assume that $T$ is o-minimal and satisfies \Hfour{}. Given $(\mm, \theta) \models T\theta^C$, $A \subseteq M$, and $X \subseteq M^n$ that is $A$-definable in $(\mm, \theta)$, the set $\operatorname{Cl}({X})$ is already $\cl_\theta(A)$-definable in $\mm$.
     In particular, $T$ is an o-minimal open core of the theory $T\theta^C$.
\end{theorem}

\noindent Recall that, for any $L$-formula $\psi(\ux; \uw)$, condition \Hfour{} gives us another formula $\sigma_\psi(\uw)$ such that $\sigma_\psi(\ud)$ holds if and only if $\psi(\ux; \ud)$ implies no finite disjunction of non-trivial linear dependencies over $\VV$.
If we apply \Hfour{} to a formula of the form $\exists \uz : \uz \in B(\ue) \wedge \psi(\ux;\uz; \uw)$, then the resulting formula is a priori a formula in both $\uw$ and $\ue$.
The following lemma shows that we can choose this formula so that it depends less on $\ue$.

\begin{lemma} \label{lemma_local_hfour}
    Let $\psi(\ux; \uz; \uw)$ be an $L$-formula, and let $\sigma(\uw\ue) := \sigma_{\phi}(\uw\ue)$ be the formula obtained from \Hfour{} (see Definition \ref{def_hfour}) for $\phi(\ux; \uw\ue) := \exists \uz : \uz \in B(\ue) \wedge \psi(\ux;\uz; \uw)$.
    Then the formula $\sigma(\uw\ue)$ is equivalent to a formula of the form
    \begin{align}
        \exists \ux \in \VV : \exists \uz : \uz \in B(\ue) \wedge \psi(\ux; \uz; \uw) \wedge \bigwedge\nolimits_{k=1}^m \neg \varphi_k\Big(\sum\nolimits_{l=1}^n \lambda_{k, l} \cdot x_l ; \uw\Big) \label{tag_stupid_fml}
    \end{align}
    where each $\varphi_k(y; \uw)$ is algebraic in $y$ and each $(\lambda_{k, 1}, \dots, \lambda_{k, n})$ lies in $K^n \setminus \set{\uzero}$.
\begin{proof}
    Given a tuple $\ulambda{} = (\lambda_1, \dots, \lambda_n) \in K^n$, we let $\ulambda{} \cdot \ux$ denote the sum $\sum_{l=1}^n \lambda_l \cdot x_l$.
    We start with a claim:
\begin{subclaim} \label{ref_claim_imply_when_continuous}
        Let $\psi'(\ux; \uz; \uw)$ be an $L$-formula, let $\ulambda{}_1, \dots, \ulambda{}_m \in K^n \setminus \set{\uzero}$ be non-zero tuples in our field, and let $\varphi_1(y; \uw\ue), \dots, \varphi_m(y; \uw\ue)$ be $L$-formulas that define finite subsets of $\VV$ in $y$.
        Assume also that, for each $k \in \set{1, \dots, m}$, the function $\ux \mapsto \ulambda{}_k \cdot \ux$ is continuous on the set defined by $\exists \uz\uw : \psi'(\ux; \uz; \uw)$.

        Then the formula $\sigma'(\uw\ue) := \exists \ux \in \VV : \exists \uz : \uz \in B(\ue) \wedge \psi'(\ux; \uz; \uw) \wedge \bigwedge\nolimits_{k=1}^m \neg\varphi_k( \ulambda{}_k \cdot \ux; \uw \ue)$ is, modulo $T$, implied by a formula of the form
        $$
        \exists \ux \in \VV : \exists \uz : \uz \in B(\ue) \wedge \psi'(\ux; \uz; \uw) \wedge \bigwedge\nolimits_{k=1}^m \neg\varphi'_k( \ulambda{}_k \cdot \ux; \uw)
        $$
        where $\varphi'_1(y; \uw), \dots, \varphi'_m(y; \uw)$ are also $L$-formulas that define finite subsets of $\VV$ in $y$.
        Note that the tuples $\ulambda{}_k$ are unchanged.
    \begin{innerproof}
        Throughout this proof, we set $\psi'(\VV; \mm; \ud) := \set{\uv\ua \in \VV^{|\ux|} \times M^{|\uz|} : \mm \models \psi'(\uv;\ua;\ud)}$ and use similar notation for other formulas in $\ux\uz\uw$.
        We first use cell decomposition to find $L$-formulas $\zeta_{1, 1}(\ux; \uz; \uw), \dots, \zeta_{m, n_m}(\ux; \uz; \uw)$ such that, for any $\mm \models T$, any $\ud \in M$, and any $k \in \set{1, \dots, m}$, the formula $\bigvee_{l=1}^{n_k} \zeta_{k, l}(\ux; \uz; \ud)$ defines the set
        $$
        \set{\uv\ua \in \psi'(\VV; \mm; \ud) : \text{$\ux \mapsto \ulambda{}_k \cdot \ux$ is locally constant around $\uv\ua$}},
        $$
        and each $\zeta_{k, l}(\ux; \uz; \ud)$ either defines a definably connected component of that set or $\varnothing$.
        Here $\ux \mapsto \ulambda{}_k \cdot \ux$ is locally constant around $\uv\ua$ means that there is an open subset $V \subseteq \psi'(\VV; \mm; \ud)$ with respect to the subset topology such that $\uv\ua \in V$ and, for any $\uv'\ua' \in V$, we have $\ulambda{}_k \cdot \uv' = \ulambda{}_k \cdot \uv$.
        Being locally constant implies being constant on a definably connected component, so the formula
        $$
        \varphi'_k(y; \uw) := \exists \ux\uz : \bigvee\nolimits_{l=1}^{n_k} \zeta_{k, l}(\ux; \uz; \uw) \wedge y = \ulambda{}_k \cdot \ux
        $$
        defines a finite subset of $\VV$ in $y$ for any $k \in \set{1, \dots, m}$.

        We now show that these formulas $\varphi'_k(y; \uw)$ are as desired.
        For this, fix $\mm \models T$ and parameter tuples $\ud$ and $\uepsilon$ from $M$ such that the formula $\ux \in \VV \wedge \uz \in B(\uepsilon) \wedge \psi'(\ux; \uz; \ud)$ is consistent and $\mm \models \neg\sigma'(\ud\uepsilon)$, where $\sigma'(\uw\ue)$ is defined as in the statement of this claim.
        Then we must have
        $$
        \underbrace{\set{ \uv\ua \in \VV^{|\ux|} \!\times\! B(\uepsilon) : \psi'(\uv;\ua; \ud) }}_{=: U} = \bigcup\nolimits_{k=1}^m\!\bigcup\nolimits_{u \in \varphi_k(\VV; \ud\uepsilon)} \underbrace{\set{ \uv\ua \in \VV^{|\ux|} \!\times\! B(\uepsilon) : \psi'(\uv;\ua; \ud) \wedge \ulambda{}_k \cdot \uv = u }}_{=: U_{k, u}}\!.
        $$
        The set $U$ is an open subset of $\psi'(\VV; \mm; \ud)$, and every $U_{k, u}$ is a closed subset of $U$, since the function $\ux \mapsto \ulambda{}_{k} \cdot \ux$ is continuous on the set defined by $\exists \uz\uw : \psi'(\ux; \uz; \uw)$.
        Since $\varphi_k(\VV; \ud\uepsilon)$ is finite for each $k$, the union on the right-hand side is finite.
        Using o-minimality, we see that for any non-empty open subset $U' \subseteq U$, there must be some $U_{k, u}$ such that $U' \cap U_{k, u}$ has interior in $U'$.
        It follows that any $\uv\ua \in U$ must already lie in $\operatorname{Cl}(\operatorname{Int}(U_{k, u}))$ for some $k \in \set{1, \dots, m}$ and $u \in \varphi_k(\VV; \ud\uepsilon)$, where $\operatorname{Cl}$ and $\operatorname{Int}$ denote the closure and interior operations with respect to the subset topology on $\psi'(\VV; \mm; \ud)$.
        Since $\ux \mapsto \ulambda{}_k \cdot \ux$ is locally constant on $\operatorname{Int}(U_{k, u})$, every $\uv\ua \in U$ lies in the closure of $\zeta_{k, l}(\VV; \mm; \ud)$ for some $k \in \set{1, \dots, m}$ and $l \in \set{1, \dots, n_k}$.
        Finally, our continuity assumption on $\ux \mapsto \ulambda{}_k \cdot \ux$, together with the fact that $\ux \mapsto \ulambda{}_k \cdot \ux$ is constant on $\zeta_{k, l}(\VV; \mm; \ud)$, ensures that there is some $\uv'\ua' \in \zeta_{k, l}(\VV; \mm; \ud)$ with $\ulambda{}_k \cdot \uv = \ulambda{}_k \cdot \uv'$, which implies $\ulambda{}_k \cdot \uv \in \varphi'_k(\VV; \ud)$.
    \end{innerproof}
    \end{subclaim}
    \noindent Let $\sigma(\uw\ue)$ be defined as in the statement of Lemma \ref{lemma_local_hfour}.
    By Lemma \ref{lemma_hfour_fml}, we can assume
    $$
    \sigma(\uw \ue) = \exists \ux \in \VV : \exists \uz : \uz \in B(\ue) \wedge \psi(\ux; \uz; \uw) \wedge \bigwedge\nolimits_{k=1}^m \neg\varphi_k(\ulambda{}_k \cdot \ux; \uw \ue)
    $$
    with each $\ulambda{}_k = (\lambda_{k, 1}, \dots, \lambda_{k, n}) \in K^n \setminus \set{\uzero}$ and with each $\varphi_k(y; \uw \ue)$ defining a finite subset of $\VV$ in $y$.
    Using cell decomposition, we obtain $L$-formulas $\psi_1(\ux; \uz; \uw), \dots, \psi_q(\ux; \uz; \uw)$ such that $\psi(\ux; \uz; \uw) \equiv \bigvee_{i=1}^q \psi_i(\ux; \uz; \uw)$ and, for all $k \in \set{1, \dots, m}$ and $i \in \set{1, \dots, q}$, the map $\ux \mapsto \ulambda{}_k \cdot \ux$ is continuous on the set defined by $\exists \uz\uw : \psi_i(\ux; \uz;\uw)$.
    Applying Claim \ref{ref_claim_imply_when_continuous}, we see that, for all $i \in \set{1, \dots, q}$, the formula
    $$
    \sigma_i(\uw\ue) := \exists \ux \in \VV : \exists \uz : \uz \in B(\ue) \wedge \psi_i(\ux; \uz; \uw) \wedge \bigwedge\nolimits_{k=1}^m \neg\varphi_k(\ulambda{}_k \cdot \ux; \uw \ue )
    $$
    is implied by a formula of the form
    $
    \exists \ux \in \VV : \exists \uz : \uz \in B(\ue) \wedge \psi_i(\ux; \uz; \uw) \wedge \bigwedge\nolimits_{k=1}^m \neg\varphi'_{i,k}(\ulambda{}_k \cdot \ux; \uw )
    $
    where each $\varphi'_{i,k}(y; \uw)$ defines a finite subset of $\VV$ in $y$.
    Now the formula
    \begin{align}
        \exists \ux \in \VV : \exists \uz : \uz \in B(\ue) \wedge \psi(\ux; \uz; \uw) \wedge \bigwedge\nolimits_{k=1}^m \neg\Big(\bigvee\nolimits_{i=1}^q \varphi'_{i,k}(\ulambda{}_k \cdot \ux ; \uw)\Big) \label{tag_new_fml_ez}
    \end{align}
    clearly implies $\bigvee_{i=1}^q \big(\exists \ux \in \VV : \exists \uz : \uz \in B(\ue) \wedge \psi_i(\ux; \uz; \uw) \wedge \bigwedge\nolimits_{k=1}^m \neg\varphi'_{i,k}(\ulambda{}_k \cdot \ux ; \uw)\big)$, and therefore also $\bigvee_{i=1}^q \sigma_i(\uw\ue)$, which is equivalent to $\sigma(\uw\ue)$.
    On the other hand, given $\mm \models T$ and parameter tuples $\ud$ and $\uepsilon$ from $M$, if $\mm \models \sigma(\ud\uepsilon)$, then, by definition, the formula $\exists \uz : \uz \in B(\uepsilon) \wedge \psi(\ux; \uz; \ud)$ has a realization in some elementary extension that is linearly independent over $\VV$.
    Therefore, $\mm \models \sigma(\ud\uepsilon)$ implies
    $$
    \mm \models \exists \ux \in \VV : \exists \uz : \uz \in B(\uepsilon) \wedge \psi(\ux; \uz; \ud) \wedge \bigwedge\nolimits_{k=1}^m \neg\Big(\bigvee\nolimits_{i=1}^q \varphi'_{i,k}( \ulambda{}_k \cdot \ux ; \ud)\Big),
    $$
    since $\bigvee\nolimits_{i=1}^q \varphi'_{i,k}( y ; \ud)$ defines a finite subset of $\VV$.
    Thus $\sigma(\uw\ue)$ is equivalent to the formula (\ref{tag_new_fml_ez}), which has the form described in the statement of Lemma \ref{lemma_local_hfour}.
\end{proof}
\end{lemma}

\begin{lemma}
    The set
    $$
    \dd_o := \Set{\psi(\uxvec; \uz; \uw) \in L : \parbox{10.1cm}{``for all models $\mm \models T$, for all $\ud \in M$, and $\uepsilon \in M$, the $L(M)$-${}$$\hspace{4pt}$formula $\uz \in B(\uepsilon) \wedge \psi(\uxvec; \uz; \ud)$ is either inconsistent or implies no\\${}$\hspace{0pt} finite disjunction of non-trivial linear dependencies in $\uxvec$ over $\VV$''}}
    $$
    is suitable and admits a splitting strategy.
\begin{proof}
    It is clear that $\dd_o$ is suitable; see Definition \ref{def_suitable_set}.
    To show that $\dd_o$ admits a splitting strategy, fix an $L$-formula $\psi(\uxvec; \uz; \uw)$ with $\ux = (x_1, \dots, x_n)$ finite.
    Given $\ud \in M$ and a tuple $\uepsilon$ from $M$, we see that $\uz \in B(\uepsilon) \wedge \psi(\uxvec;\uz;\ud)$ implies no finite disjunction of non-trivial linear dependencies in $\uxvec$ over $\VV$ if and only if $\mm \models \sigma(\ud\uepsilon)$, where $\sigma(\uw\ue)$ is the formula from Definition \ref{def_hfour} for $\exists \uz : \uz \in B(\ue) \wedge \psi(\uxvec; \uz;\uw)$.
    By Lemma \ref{lemma_local_hfour}, with $\uxvec$ instead of $\ux$, we can assume
    \begin{align}
        \sigma(\uw\ue) = \exists \uxvec \in \VV : \exists \uz : \uz \in B(\ue) \wedge \psi(\uxvec; \uz; \uw) \wedge \bigwedge\nolimits_{k=1}^m \neg\varphi_k\Big(\sum\nolimits_{l=1}^n \lambda_{k, l}[x_l]; \uw\Big) \label{tag_sigma_for_uxvec}
    \end{align}
    with $\ulambda{}_k = (\lambda_{k, 1}, \dots, \lambda_{k, n}) \in K[X]^n \setminus \set{\uzero}$ and $\varphi_k(y; \uw)$ algebraic in $y$ for each $k$.
    Note that the tuple $\ue$ does not appear in the formulas $\varphi_k(y; \uw)$.
    Here we also write $\rho[x] = \sum_{i=0}^{\deg(\rho)}(\rho)_i \cdot x^i$ for any polynomial $\rho \in K[X]$.

\begin{subclaim} \label{claim_formula_in_d_o}
    The formula $\psi_0(\uxvec{}; \uz; \uw) := \psi(\uxvec{}; \uz; \uw) \wedge \forall \ue : (\uz \in B(\ue) \rightarrow \sigma(\uw\ue))$ is in $\dd_o$.
\begin{innerproof}
    Suppose that the formula $\uz \in B(\uepsilon) \wedge \psi_0(\uxvec; \uz; \ud)$ is realized by $\uvvec\ua$.
        By the definition of $\psi_0(\uxvec;\uz; \uw)$, we immediately see that $\mm \models \sigma(\ud\uepsilon)$.
        Using Definition \ref{def_hfour}, there are an elementary extension $\mm' \succ \mm$, $\uvvec' \in \VV'$, and $\ua' \in M'$ such that $\uvvec{}'$ is linearly independent over $\VV$ and $\mm' \models \ua' \in B(\uepsilon) \wedge \psi(\uvvec'; \ua'; \ud)$.
        Since $\uvvec{}'$ is linearly independent over $\VV$, we obtain
        $$
        \mm' \models \bigwedge\nolimits_{k=1}^m \neg\varphi_k\Big(\sum\nolimits_{l=1}^n \lambda_{k, l}[v'_l]; \ud\Big)
        $$
        and hence $\mm' \models \sigma(\ud \uepsilon')$ for any $\uepsilon' \in M'$ with $\ua' \in B(\uepsilon')$.
        Therefore $\mm' \models \psi_0(\uvvec{}';\ua'; \ud)$.
        We conclude that the formula $\uz \in B(\uepsilon) \wedge \psi_0(\uxvec; \uz; \ud)$ is, for any $\ud \in M$, either inconsistent or implies no finite disjunction of non-trivial linear dependencies in $\uxvec$ over $\VV$.
        Hence $\psi_0(\uxvec; \uz; \uw) \in \dd_o$.
\end{innerproof}
\end{subclaim}
    \noindent Define the formula $\psi'(\uxvec; \uz; \uw) := \psi(\uxvec; \uz; \uw) \wedge \exists \ue : \big(\uz \in B(\ue) \wedge \neg\sigma(\uw\ue)\big)$.
    Clearly, we have $\psi(\uxvec; \uz; \uw) \equiv \psi_0(\uxvec; \uz; \uw) \vee \psi'(\uxvec; \uz; \uw)$.
    Looking at (\ref{tag_sigma_for_uxvec}), we see that $\neg\sigma(\uw\ue)$ is
    $$
    \forall\uxvec\in \VV : \forall \uz : \Big(\big(\uz\in B(\ue) \wedge \psi(\uxvec; \uz; \uw)\big) \rightarrow \bigvee\nolimits_{k=1}^m \varphi_k\Big(\sum\nolimits_{l=1}^n \lambda_{k, l}[x_l]; \uw\Big) \Big).
    $$
    Therefore $\uxvec{} \in \VV \wedge \psi'(\uxvec; \uz; \uw)$, where $\uxvec{} \in \VV$ means that all placeholders $x^i_k$ that appear in $\psi'(\uxvec; \uz; \uw)$ are in $\VV$, implies the disjunction $\bigvee_{k=1}^m\varphi_k\big(\sum\nolimits_{l=1}^n \lambda_{k, l}[x_l]; \uw\big)$.
    Defining the formula $\psi_k(\uxvec; \uz; \uw) := \psi'(\uxvec; \uz; \uw) \wedge \varphi_k\big(\sum\nolimits_{l=1}^n \lambda_{k, l}[x_l]; \uw\big)$ for all $k \in \set{1, \dots, m}$, we obtain
    $$
    T \models \forall\uz\uw: \forall \uxvec \in \VV :\psi(\uxvec; \uz; \uw) \leftrightarrow \bigvee\nolimits_{k=0}^m \psi_k(\uxvec; \uz; \uw).
    $$
    For $k \geq 1$, the formula $\psi_k(\uxvec; \uz; \uw)$ is as in (ii) of Definition \ref{def_suitable_set}, since it implies some linear dependency, and by Claim \ref{claim_formula_in_d_o}, the formula $\psi_0(\uxvec; \uz; \uw)$ is as in (i) there, since it is in $\dd_o$.
    Hence $\dd_o$ admits a splitting strategy by definition.
\end{proof}
\end{lemma}

\noindent So far, we have worked entirely within the theory $T$ itself.
We now use our more technical description of definable sets, Theorem \ref{theorem_big_fml_preceise}, with the set $\dd_o$ from above to prove that $T\theta^C$ has an o-minimal open core.

\begin{proof}[Proof of Theorem \ref{theorem_o_min_open_core}]
    Let the set $X$ be defined by the $L_\theta(\ud)$-formula $\phi(\uz; \ud)$ in the model $(\mm, \theta) \models T\theta^C$.
    By Theorem \ref{theorem_big_fml_preceise}, and since closure commutes with finite unions, we may assume without loss that $X$ is defined by a formula of the form
    $$
    \exists \uy \in Y_\ud : \exists \ux \in \VV : \psi_\theta(\ux; \uz; \uy\ud) \wedge S(\ux; \pi(\uy))
    $$
    where $S(\ux; \uy)$ is a parametrized $C$-sequence-system, $\psi(\uxvec; \uz; \uy\uw) \in \dd_o$ is bounded by $S$, $Y$ is an algebraic pattern in $\uw$, and $\pi$ is a projection such that $(Y, \pi)$ is compatible with $S$.
    As $(Y, \pi)$ is compatible with $S$, the tuple $\pi(\uu)$ is compatible with $S$ for any $\uu \in Y_\ud$; see Definition \ref{def_alg_pat_comp}.
    Since the set $Y_\ud \subseteq \cl_\theta(\ud)$ is finite, we may again assume without loss that $X$ is defined by a formula of the form
    $$
    \exists \ux \in \VV : \psi'_\theta(\ux;\uz; \ud') \wedge S'(\ux)
    $$
    where $\psi'(\uxvec;\uz; \uw') \in \dd_o$, $\ud' \in \cl_\theta(\ud)$, and $S'(\ux)$ is a $C$-sequence-system over $(\VV, \theta)$.
    Define the formula $\zeta(\uz; \uw') := \exists \uxvec \in \VV : \psi'(\uxvec; \uz; \uw')$.
    It is clear that $X \subseteq \zeta(\mm; \ud')$, and therefore also $\operatorname{Cl}({X}) \subseteq \operatorname{Cl}({\zeta(\mm; \ud'))}$.
    Let $\ua \in \operatorname{Cl}({\zeta(\mm; \ud'))}$ and let $\uepsilon$ be a tuple from $M$ such that $\ua \in B(\uepsilon)$.
    Since $\ua \in \operatorname{Cl}({\zeta(\mm; \ud'))}$, the formula $\uz \in B(\uepsilon) \wedge \psi'(\uxvec; \uz; \ud')$ is consistent.
    Since $\psi'(\uxvec; \uz; \uw') \in \dd_o$, this formula implies no finite disjunction of non-trivial linear dependencies in $\uxvec$ over $\VV$.
    Notice that this formula is still bounded by $S'$.
    Thus, by Theorem \ref{theorem_big_characterization}, our characterization of existentially closed models of $T^C_\theta$, the sentence
    $$
    \exists \ux \in \VV : \exists \uz : \uz \in B(\uepsilon) \wedge \psi'_\theta(\ux; \uz; \ud') \wedge S'(\ux)
    $$
    holds in $(\mm, \theta)$.
    Hence the formula $\exists \ux \in \VV : \psi'_\theta(\ux; \uz; \ud') \wedge S'(\ux)$ has a realization in $B(\uepsilon)$.
    As this formula defines $X$ and $\uepsilon$ was arbitrary with $\ua \in B(\uepsilon)$, we obtain $\ua \in \operatorname{Cl}(X)$.
    Hence $\operatorname{Cl}(X) = \operatorname{Cl}(\zeta(\mm; \ud'))$ is $L$-definable with parameters in $\cl_\theta(\ud)$.
\end{proof}

\noindent Notice that one could likely also modify the set $\dd_o$ so that every $\psi(\uxvec; \uz; \uw) \in \dd_o$ defines a cell.
One could additionally ensure that, for all parameter tuples $\ud$ and $\uepsilon$ from $M$, the partial type $\uxvec\uz \in B(\uepsilon) \wedge \psi(\uxvec; \uz; \ud)$ is either inconsistent or implies no finite disjunction of non-trivial linear dependencies in $\uxvec$ over $\VV$.
Since $\uxvec$ is infinite, $\uepsilon$ is accordingly large, and hence $\uxvec\uz \in B(\uepsilon)$ can only be expressed as a partial type.

\bibliographystyle{alpha}
\bibliography{sample}

\begin{thebibliography}{{Blo}23}

\bibitem[Adl09]{Adl09}
Hans Adler.
\newblock A geometric introduction to forking and thorn-forking.
\newblock {\em Journal of Mathematical Logic}, 09(01):1--20, 2009.

\bibitem[{Blo}23]{Blo23}
Alexi {Block Gorman}.
\newblock Companionability characterization for the expansion of an o-minimal theory by a dense subgroup.
\newblock {\em Annals of Pure and Applied Logic}, 174(10), 2023.

\bibitem[Chi25]{Chi25}
Leon Chini.
\newblock Model theory of generic vector space endomorphisms, 2025.

\bibitem[d'E21]{dEl21b}
Christian d'Elbée.
\newblock Generic expansions by a reduct.
\newblock {\em Journal of Mathematical Logic}, 21(03):2150016, 2021.

\bibitem[Pil88]{Pil88}
Anand Pillay.
\newblock On groups and fields definable in o-minimal structures.
\newblock {\em Journal of Pure and Applied Algebra}, 53(3):239--255, 1988.

\bibitem[TZ12]{TZ12}
Katrin Tent and Martin Ziegler.
\newblock {\em A Course in Model Theory}.
\newblock Lecture Notes in Logic. Cambridge University Press, 2012.

\end{thebibliography}

\Addresses

\end{document}